\documentclass[12pt,a4paper, oneside, bold,secthm,seceqn,amsthm,ussrhead,reqno]{article}
\usepackage[utf8]{inputenc}
\usepackage[english]{babel}
\usepackage[symbol]{footmisc}
\usepackage{amssymb,amsmath,amsthm,amsfonts,xcolor,enumerate,hyperref, comment,longtable, cleveref}
\usepackage{verbatim}
\usepackage{times}
\usepackage{cite}
\usepackage{pdflscape}
\usepackage{ulem}
\usepackage[mathcal]{euscript}
\usepackage{tikz}
\usepackage{hyperref}
 
\usepackage{cancel}
\usepackage{stmaryrd}
 
\usepackage{amsfonts}
\usepackage{amssymb}
\usepackage{times}
\usepackage{xcolor}
\usepackage{tkz-graph}
\usepackage{url}
\usepackage{float}
\usepackage{tasks}
\usepackage{array}

\usepackage{cite}
\usepackage{hyperref}

 \usepackage{fancyhdr} 
\usepackage{amsfonts}
\usepackage{amsmath}
\usepackage{eurosym}
\usepackage{geometry}

\usepackage{caption,booktabs}

\theoremstyle{plain}
\newtheorem{theorem}{Theorem}
\newtheorem{lemma}[theorem]{Lemma}
\newtheorem{proposition}[theorem]{Proposition}
\newtheorem{remark}[theorem]{Remark}

\newtheorem{definition}[theorem]{Definition}
\newtheorem{corollary}[theorem]{Corollary}

\usetikzlibrary{arrows}

\usepackage{fouriernc}

\begin{document}
 

\noindent{\Large
$\delta$-Leibniz   algebras and related $\delta$-type algebras} \footnote{The this work is supported by FCT   2023.08031.CEECIND and UID/00212/2025.} 
\footnote{The authors are grateful to Saïd Benayadi and Samuel Lopes for insightful discussions that drew their attention to the topic under consideration; 
and to Abror Khudoyberdiyev for some constructive comments.}

 \bigskip

\begin{center}

 {\bf Jobir Adashev\footnote{Institute of Mathematics, Uzbekistan  Academy of  Sciences, Uzbekistan; \  
 Chirchiq State Pedagogical University,  Tashkent,  Uzbekistan; \  adashevjq@gmail.com} \&
   Ivan Kaygorodov\footnote{CMA-UBI, University of  Beira Interior, Covilh\~{a}, Portugal; \  \ kaygorodov.ivan@gmail.com}  }

\end{center}

\

 \bigskip

\noindent {\bf Abstract:}
{\it  
This paper introduces and investigates the structure of $\delta$-Leibniz algebras, which serve as a parametric generalization of classical Leibniz algebras defined by a scalar $\delta$. The authors define $\delta$-Lie algebras, $\delta$-Lie dialgebras, and $\delta$-Zinbiel algebras via a standard procedure and study their fundamental properties. Furthermore, the research describes symmetric $\delta$-Leibniz algebras and algebras of $\delta$-biderivation type, establishing their connections with nilalgebras. Finally, these results provide a unified framework for understanding various classes of non-associative algebras through the lens of the $\delta$ parameter.
}

 \bigskip 

\noindent {\bf Keywords}:
{\it 
$\delta$-derivation,
anti-Leibniz algebra,
$\delta$-Leibniz algebra,
anti-Zinbiel algebra,
$\delta$-Zinbiel algebra.}

\bigskip 

 \
 
\noindent {\bf MSC2020}:  
17A30 (primary);
17A32, 
17A36, 
17A50 (secondary).

	 \bigskip


\tableofcontents 
\newpage
\section*{Introduction}

The concept of Leibniz algebras as Lie dialgebras was popularized by Loday in the 1990s, and it is still under active consideration \cite{Leib1,Leib2,Leib3,KM,FKS}.
The principal role in the definition of Leibniz algebras is played by the fact that each right  multiplication on an element from the same vector space gives a derivation of the algebra. The concept of derivation is also playing a central role in Lie, Jordan, Novikov, Poisson, etc. algebras. 
Recently, the idea of anti-Leibniz algebras, via changing the notion of derivations to antiderivations $\big($also known as $(-1)$-derivations$\big)$, 
was introduced by Braiek, Chtioui, and  Mabrouk \cite{antileib}.
The algebraic study of the theory of anti-Leibniz algebras was continued for conformal algebras \cite{CT} and bialgebras \cite{antilb}.
Anti-Leibniz algebras are a non-commutative generalization of Jacobi-Jordan $\big($also known as mock-Lie$\big)$ algebras. 

\medskip 

The present paper is dedicated to a study of $\delta$-Leibniz algebras, 
obtained as a simultaneous generalization of Leibniz and anti-Leibniz algebras via $\delta$-derivations.
The concept of $\delta$-type algebras via $\delta$-derivations first appeared in \cite{dP,DET} in the context of 
simultaneous generalization of Poisson and anti-Poisson algebras.
Later, a similar notion was considered in the context of Novikov algebras \cite{dN,dNalg}.
As it was found, anti-Novikov algebras are very related to anti-pre-Lie algebras introduced by Bai and  Liu \cite{LB},
which were also studied in \cite{CSL, bai25,LB24}.  
The principal identity of  anti-pre-Lie algebras gives a generalization of the antiassociative identity 
$\big($see \cite{FKK}  and  \cite{R22} about antiassociative algebras$\big).$
Antiassociative algebras give a tool for the construction of anti-dendriform algebras \cite{aan25,bai23,SZ}.
The concept of $\delta$-commutator presents a special type of mutation of algebras, and it was deeply studied for Leibniz and Zinbiel algebras by Dzhumadildaev \cite{dzh08,dzh09}.
On the other hand, quasi-associative and quasi-alternative algebras were defined via a suitable $\delta$-commutator product \cite{Ded}.

\medskip  

The present paper is organized as follows. 
Section \ref{dlie} introduces the concept of $\delta$-Lie algebras and shows that almost all $\delta$-Lie algebras will be $2$-step nilpotent:
some interesting cases appear for $\delta=-\frac 12$ $\big($Lemma \ref{ancommass}$\big).$
Section \ref{ddilie} is dedicated to introducing of $\delta$-Lie dialgebras:
they are equivalent to $2$-step nilpotent algebras or  to antiassociative anti-right-commutative algebras $\big($Theorem \ref{ddilieth}$\big).$
The rest of Section  \ref{ddilie} is dedicated to a study of antiassociative anti-right-commutative algebras: the first known non-$2$-step nilpotent antiassociative anti-right-commutative algebras appear in dimension $7$ $\big($Remark \ref{dimaac}$\big);$
obtain a basis of the free  antiassociative anti-right-commutative algebra generated by a set $X$  $\big($Proposition \ref{basaac}$\big);$
and construct the identity  that governs the Koszul dual operad of the operad that governs the  antiassociative anti-right-commutative algebras.  
Section \ref{dl} is the central part of our research and discusses $\delta$-Leibniz algebras.
Namely, subsection \ref{antiL} is dedicated to a particular case of $\delta$-Leibniz algebras:
we prove that there are no simple anti-Leibniz algebras $\big($Theorem \ref{simplantil}$\big);$
and that each symmetric anti-Leibniz algebra  is a central extension of a suitable
Jacobi-Jordan algebra $\big($Theorem \ref{cenrext}$\big).$
Subsection \ref{deltL} discusses some questions related to $\delta$-Leibniz algebras:
we give a classification of $2$-dimensional $\delta$-Leibniz algebras $\big($Theorem \ref{cldn}$\big)$
and study the nilpotency index of $\delta$-Leibniz algebras $\big($Proposition  \ref{nilpL}$\big).$
As a corollary, we have the upper bound for the length of finite-dimensional $\delta$-Leibniz algebras $\big($Corollary \ref{lengthL}$\big).$
We also proved that each $\delta$-associative algebra is $3$-step nilpotent $\big($Proposition \ref{deltaass}$\big)$ and characterized commutative $\delta$-Leibniz algebras in  Theorem \ref{comL}.
Proposition \ref{liadmd} gives identities that characterize Lie-admissible $\delta$-Leibniz algebras
$\big($for the first time, these identities were discovered by Malcev in 1950 \cite{M50}$\big)$.
Theorem \ref{Lcons} states that each $\delta$-Leibniz algebra is a conservative algebra 
$\big($the definition of conservative algebras was gives by Kantor in 1972  \cite{K72}$\big)$.
Subsection \ref{ddd} generalizes some known results about relations between 
Leibniz and anti-Leibniz algebras, associative and antiassociative algebras, and dialgebras in the context of $\delta$-Leibniz algebras.
Subsection \ref{syml} gives a characterization of symmetric $\delta$-Leibniz algebras:
they are power-associative and if $\delta\neq \frac 12$ they are nilalgebras with nilindex $3$
$\big($Theorem \ref{symmetric}$\big).$
Section \ref{dZ} introduces the notion of $\delta$-Zinbiel algebras as Koszul dual to $\delta$-Leibniz algebras $\big($it is a generalization of the way for obtaining Zinbiel algebras$\big).$
Namely, we give a classification of $2$-dimensional $\delta$-Zinbiel algebras $\big($Theorem \ref{cldz}$\big)$ and study the nilpotency index of $\delta$-Zinbiel algebras $\big($Proposition \ref{nilpdZ}$\big).$
As a corollary, we have the upper bound for the length of $\delta$-Zinbiel algebras $\big($Corollary \ref{lengthZ}$\big).$
Proposition \ref{34} also gives identities that characterize Lie-admissible $\delta$-Zinbiel algebras
$\big($as it was mentioned below, we remember that for the first time, these identities were discovered by Malcev \cite{M50}$\big)$.
Subsection \ref{antiz} is dedicated to proving an analog of the most famous results in the theory of Zinbiel algebras. Namely, we prove that each finite-dimensional anti-Zinbiel algebra is nilpotent  $\big($Theorem \ref{naz}$\big).$
The last section gives a way to connect Lie-admissible and Jacobi-Jordan admissible algebras via $\delta$-commutator context: we characterize so-called symmetric $\delta$-Leibniz admissible algebras and prove that they are nilalgebras  if $\delta\notin \big\{ \pm 1,\frac12 \big\}$ $\big($Theorem \ref{nilds}$\big).$
Algebras of $\delta$-biderivation-type, considered in the last subsection, are a particular case of symmetric $\delta$-Leibniz admissible algebras and give a bridge between algebras of biderivation-type 
and algebras of anti-biderivation-type, previously considered by Benayadi and co-authors \cite{BBK,BO24},
due to the concept of $\delta$-biderivations, prevously considered by Wang and  Chen \cite{wc}.

\medskip 

  \noindent  
{\bf Notations.}
We do not provide some well-known definitions
$\big($such as definitions of 
Lie algebras,
Leibniz algebras,   nilpotent algebras, solvable algebras,  etc.$\big)$ 
and refer the readers to consult previously published papers. 
For a set of vectors $S,$ we denote by  
$\big\langle S \big\rangle$ the vector space generated by $S.$ 
For the $\delta$-commutator and $\delta$-associator, we will use the standard notations:

\begin{center}
    $\big[x,y\big]_{\delta} : = x\ast y- \delta y\ast x,$   \ and \ 
    $\big(x,y,z\big)_{\delta}^{\ast}:=(x\ast y) \ast z - \delta x\ast(y\ast z).$ 
    \end{center}
\noindent
In general, we are working with the complex field, but some results are correct for other fields.
We also almost always  assume that $\delta  \neq 1$ and 
algebras  under our consideration are nontrivial,  i.e., they have   nonzero multiplications.
 \section{$\delta$-Lie   algebras}\label{dlie}

\begin{definition}\label{12der}
Let ${\mathcal A}$ be an algebra and $\delta$ be a fixed complex number.  
Then  a linear map $\varphi$ is a $\delta$-derivation if it satisfies
\begin{center}
$\varphi(x y) \ = \ \delta  \big(\varphi(x)y+ x \varphi(y)\big).$
\end{center}
\end{definition}

If $\delta=1$ $\big($resp., $\delta=-1\big),$ we have a derivation $\big($resp., antiderivation\footnote{Let us remember that the notion of antiderivations plays an important role in the definition of mock-Lie algebras\cite{Z17} and (transposed) anti-Poisson algebras \cite{dP}; 
in the characterization of Lie algebras with identities \cite{fant} and so on.
In the anticommutative case,  antiderivations coincide with reverse derivations defined by Herstein in 1957 \cite{her}.}$\big)$.
The notion of $\delta$-derivations was introduced in a paper by Filippov \cite{fil1} (see also references in \cite{k23,aae23,zz}).

\begin{definition}\label{deltaLie}
Let $L$ be an anticommutative algebra and $\delta$ be a fixed complex number. Then $L$ is called a  $\delta$-Lie  algebra if the following identity holds true:
\begin{equation*} 
       (xy)z\ =\  \delta \big((xz)y+x(yz)\big).
\end{equation*}
\end{definition}

Let $\mathcal{L}ie_{\delta}$ be a variety of $\delta$-Lie algebras  for some $\delta$. Then $\mathcal{L}ie_0$ is the variety of anticommutative $2$-step nilpotent algebras, and $\mathcal{L}ie_1$ is the variety of Lie algebras.
Hence, the varieties of $\delta$-Lie  algebras give a type of continuous deformation 
of the variety of Lie algebras to the variety of anticommutative $2$-step nilpotent algebras and vice versa.

\begin{lemma}\label{3} Let $L$ be a $\delta$-Lie algebra  $\big(\delta\neq1\big).$ 
Then $L$ is  antiassociative 
$\big($i.e., $(x,y,z)_{-1}=0$ and it is a  dual mock Lie algebra {\rm \cite{Z17}}$\big).$
\end{lemma}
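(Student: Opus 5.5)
The plan is to obtain the antiassociative identity $(xy)z=-x(yz)$ by combining the defining identity of Definition \ref{deltaLie} with its image under the transposition $y\leftrightarrow z$ and then with its cyclic permutations, using anticommutativity throughout. I expect this to show that for most $\delta$ all products $(xy)z$ vanish, in which case antiassociativity holds trivially, while for the special value $\delta=-\frac12$ the identity holds nontrivially.

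\emph{Step 1 (swap $y$ and $z$).} Writing the identity with $y,z$ swapped gives $(xz)y=\delta\big((xy)z+x(zy)\big)$. Substituting this into the original identity and using $x(zy)=-x(yz)$ yields
\[(1-\delta^2)(xy)z=\delta(1-\delta)\,x(yz).\]
Since $\delta\neq 1$, this becomes $(1+\delta)(xy)z=\delta\, x(yz)$. This is already the required identity when $\delta=-\frac12$.

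\emph{Step 2 (cyclic system).} Put $A=(xy)z$, $B=(yz)x$, $C=(zx)y$. By anticommutativity $x(yz)=-B$, so Step 1 reads $(1+\delta)A=-\delta B$, and cyclically $(1+\delta)B=-\delta C$ and $(1+\delta)C=-\delta A$. The original identity rewritten with anticommutativity is $A=-\delta(B+C)$.

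\emph{Step 3 (case analysis).}
\begin{itemize}
\item If $\delta=0$, the defining identity itself gives $(xy)z=0$, so $L$ is antiassociative trivially.
\item If $\delta=-1$, Step 1 gives $x(yz)=0$ for all $x,y,z$. Hence $B=C=0$, and then $A=0$ as well.
\item Otherwise, express $B=-\frac{1+\delta}{\delta}A$ and $C=\frac{(1+\delta)^2}{\delta^2}A$, and substitute into $A=-\delta(B+C)$. This gives $\frac{2\delta+1}{\delta}A=0$. So for $\delta\neq-\frac12$ all triple products $(xy)z$ vanish, and antiassociativity holds trivially (both sides are $0$, since $x(yz)=-(yz)x$ is also a triple product of this form).
\item For $\delta=-\frac12$, Step 1 gives exactly $(xy)z=-x(yz)$.
\end{itemize}

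The main care point is bookkeeping the signs under anticommutativity in Step 2 and not dividing by $\delta$ or $1+\delta$ in the degenerate cases, which is why those cases are treated separately.
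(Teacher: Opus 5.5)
Your argument is correct, but it takes a different and longer route than the paper.

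\textbf{The paper's route.} The paper adds the defining identity to its $y\leftrightarrow z$ swap, whereas you add $\delta$ times the swap. With the paper's combination the terms $\delta\big(x(yz)+x(zy)\big)$ cancel by anticommutativity, leaving $(\delta-1)\big((xy)z+(xz)y\big)=0$. For $\delta\neq1$ this is anti-right-commutativity, and anticommutativity turns it into antiassociativity via $(xy)z=-(xz)y=(zx)y=-(zy)x=(yz)x=-x(yz)$. In your notation this says $A=B=C$, uniformly in $\delta$ and with no case split.

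\textbf{Your route.} Your combination eliminates $(xz)y$ and produces $(1+\delta)(xy)z=\delta\,x(yz)$. This is antiassociativity only at $\delta=-\frac12$, which is what forces you into the cyclic system and the case analysis. Your computation $\frac{2\delta+1}{\delta}A=0$ is right. Since you only ever divide by $\delta$, the case $\delta=-1$ is already covered by your generic case and needs no separate treatment.

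\textbf{Comparison.} Your route proves Lemma~\ref{5} at the same time: it gives 2-step nilpotency for $\delta\notin\{-\frac12,1\}$ without passing through Lemma~\ref{ancommass}. The paper's route is a one-line computation. It also shows that antiassociativity holds for every $\delta\neq1$ for one structural reason, namely cyclic invariance of $(xy)z$. Your Step~1 identity combined with antiassociativity gives $(1+2\delta)(xy)z=0$ immediately, which recovers Lemma~\ref{5} in one line.
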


\begin{proof}
It is easy to see that
\begin{longtable}{lcl}
$\delta(xz)y+\delta x(yz)-(xy)z$&$=$&$0,$\\
$\delta(xy)z+\delta x(zy)-(xz)y$&$=$&$0.$
\end{longtable}\noindent 
Summarizing, we have
\ $(\delta-1)(xz)y+(\delta-1)(xy)z\ =\ 0,$ \ that gives our statement.
\end{proof}

\begin{lemma}\label{ancommass} 
Any anticommutative antiassociative algebra 
 is a  $\big(-\frac12\big)$-Lie algebra.
\end{lemma}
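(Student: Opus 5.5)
We need to show that $(xy)z=-\frac12\big((xz)y+x(yz)\big)$ holds in any anticommutative algebra satisfying $(xy)z=-x(yz)$. The plan is a direct computation: use antiassociativity together with anticommutativity to rewrite $(xz)y$ in terms of $(xy)z$, and then solve for $(xy)z$.

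The first step is to express $(xz)y$ by moving $y$ and $z$ past one another. Antiassociativity gives $(xz)y=-x(zy)$. Anticommutativity turns $-x(zy)$ into $x(yz)$. Antiassociativity once more gives $x(yz)=-(xy)z$. Hence
\begin{center}
$(xz)y \ = \ -(xy)z.$
\end{center}
Note that this identity is already forced in any antiassociative algebra. It uses only the anticommutativity of the inner product $zy$.

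The second step substitutes into the right-hand side of the $\delta$-Lie identity with $\delta=-\frac12$. Using $(xz)y=-(xy)z$ and $x(yz)=-(xy)z$, we get
\begin{center}
$-\frac12\big((xz)y+x(yz)\big) \ = \ -\frac12\big(-(xy)z-(xy)z\big) \ = \ (xy)z.$
\end{center}
This is exactly the defining identity of a $\big(-\frac12\big)$-Lie algebra from Definition \ref{deltaLie}, and anticommutativity is part of the hypotheses, so the lemma follows.

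There is no real obstacle here. The only point needing care is keeping the signs straight in the chain of rewrites in the first step. It is also worth remarking that this computation complements Lemma \ref{3}: a $\delta$-Lie algebra with $\delta\neq 1$ is antiassociative, and conversely antiassociativity makes both terms on the right-hand side equal to $-(xy)z$. The defining identity then reduces to $(1+2\delta)(xy)z=0$, which holds identically precisely for $\delta=-\frac12$.
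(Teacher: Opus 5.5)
Your proof is correct and is essentially the paper's argument: express both $(xz)y$ and $x(yz)$ as $-(xy)z$, then substitute into the $\big(-\frac12\big)$-Lie identity. The only difference is the route to $(xz)y=-(xy)z$. The paper uses $(xz)y=-(zx)y=z(xy)=-(xy)z$, while you use $(xz)y=-x(zy)=x(yz)=-(xy)z$; both are valid.
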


\begin{proof}
It is easy to see that in any anticommutative antiassociative algebra, we have
\begin{longtable}{lclclcl}
$(xz)y$&$=$&$-\ (zx)y$&$=$&$z(xy)$&$=$&$-\ (xy)z,$\\
$x(yz)$&$=$&$-\ (xy)z.$
\end{longtable}
\noindent Obviously,  we have $(xy)z \ =\   -\frac12 \big((xz)y+x(yz)\big).$
\end{proof}

\begin{lemma}\label{5} 
Let  $L$ be a     $\delta$-Lie  algebra $\big(\delta\neq-\frac12,1 \big).$ 
Then $L$ is a $2$-step nilpotent algebra.
\end{lemma}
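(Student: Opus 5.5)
Since $\delta\neq 1$, Lemma \ref{3} applies: $L$ is antiassociative, i.e. $(xy)z=-x(yz)$ for all $x,y,z\in L$. The plan is to rewrite every product of three elements in terms of the single monomial $(xy)z$, substitute into the defining $\delta$-Lie identity, and read off a scalar multiple of $(xy)z$ that must vanish.

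The rewriting is exactly the chain in the proof of Lemma \ref{ancommass}, which uses only anticommutativity and antiassociativity. It gives
\begin{center}
$(xz)y=-(zx)y=z(xy)=-(xy)z,$ \quad and \quad $x(yz)=-(xy)z.$
\end{center}
Substituting both into the defining identity $(xy)z=\delta\big((xz)y+x(yz)\big)$ yields
\begin{center}
$(xy)z=-2\delta\,(xy)z,$ \quad that is, \quad $(1+2\delta)(xy)z=0.$
\end{center}
Since $\delta\neq-\frac12$, the coefficient $1+2\delta$ is nonzero, so $(xy)z=0$ for all $x,y,z$. Anticommutativity then gives $x(yz)=-(yz)x=0$ as well, so $L^3=0$ and $L$ is $2$-step nilpotent.

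There is no real obstacle here. The only point needing care is the sign bookkeeping in the chain $(xz)y=-(xy)z$: each step uses either anticommutativity (one sign) or antiassociativity (one sign), and these should be checked individually. The case $\delta=-\frac12$ is precisely where the coefficient $1+2\delta$ vanishes, which matches Lemma \ref{ancommass} and explains why that value is excluded.
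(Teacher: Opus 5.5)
Your proof is correct and follows the paper's argument. The paper uses Lemma \ref{3} and Lemma \ref{ancommass} to see that $L$ is also a $\big(-\frac12\big)$-Lie algebra, and compares this with the $\delta$-Lie identity to get $(1+2\delta)(xy)z=0$; you reach the same identity by writing out the sign chain from the proof of Lemma \ref{ancommass} directly, and you also state the final step $x(yz)=0$, which the paper leaves implicit.
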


\begin{proof}
    By Lemmas \ref{3} and \ref{ancommass}, $L$ is a   $\big(-\frac12\big)$-Lie algebra, 
    i.e. 
\begin{longtable}{lcr}
$(xy)z$&$=$&$ -\ \frac 12 \big((xz)y+x(yz) \big),$ \\ 
$(xy)z$&$=$&$ \delta\big((xz)y+x(yz)\big).$
\end{longtable}
\noindent Obviously, we have \ 
    $(1+2\delta)(xy)z\ =\ 0$  and $(xy)z\ =\ 0.$
\end{proof}

 \section{$\delta$-Lie   dialgebras}\label{ddilie}

By applying the standard KP $\big($Kolesnikov --- Pozhidaev$\big)$ algorithm for constructing di-identities for a fixed variety of algebras, we have di-identities for $\delta$-Lie dialgebras:
\begin{equation*}
x  \vdash y\ =\ -  y  \dashv x,\end{equation*}
\begin{equation*}
x  \dashv \big( y \vdash z \big)\ =\  x \dashv \big( y \dashv z \big), \quad  
\big(x  \vdash  y \big) \vdash z  \ =\  \big(x  \dashv  y \big) \vdash z,\end{equation*}
\begin{equation}\label{first}
x  \dashv ( y  \dashv z ) - \delta \big(  (x \dashv y ) \dashv z + y \vdash (x \dashv z) \big)\ =\ 0,
\end{equation}\begin{equation}
x  \vdash ( y  \dashv z ) - \delta \big(  (x \vdash y ) \dashv z + y \dashv (x \dashv z) \big)\ =\ 0,\\
\end{equation}\begin{equation}\label{third}
x  \vdash ( y  \vdash z ) - \delta \big(  (x \vdash y ) \vdash z + y \vdash (x \vdash z) \big)\ =\ 0.
\end{equation}

 \begin{theorem}\label{ddilieth}
     Let $L$ be a $\delta$-Lie dialgebra\footnote{The category of Lie dialgebras is equivalent to the category  of Leibniz algebras.} $\big(\delta\neq 1\big).$ 
     Then
     \begin{enumerate}
         \item if $\delta\neq -\frac 12,$ then $L$ is isomorphic to a $2$-step nilpotent algebra$;$ 
         \item if $\delta= -\frac 12,$ then $L$ is isomorphic to an 
         antiassociative anti-right-commutative algebra. 
     \end{enumerate}
 \end{theorem}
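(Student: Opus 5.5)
The plan is to reduce the dialgebra to an ordinary algebra with one multiplication, then solve a finite linear system in the degree-$3$ monomials. By the di-identity $x\vdash y=-\,y\dashv x$, the operation $\vdash$ is determined by $\dashv$. I will set $xy:=x\dashv y$ and regard $L$ as the ordinary algebra $(L,\cdot)$. The statement "$L$ is isomorphic to \dots" is then read as: the functor $(L,\dashv,\vdash)\mapsto (L,\cdot)$ identifies $\delta$-Lie dialgebras with $2$-step nilpotent algebras (case 1), resp. with antiassociative anti-right-commutative algebras (case 2). The inverse functor is $x\dashv y=xy$, $x\vdash y=-yx$.

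\textbf{Step 1 (translation).} Substituting $x\vdash y=-yx$ into all the di-identities should give the following.
\begin{itemize}
\item The two "bar" identities both reduce to the single identity $x(yz)=-x(zy)$.
\item Identity (\ref{first}) becomes $x(yz)=\delta\big((xy)z-(xz)y\big)$.
\item The middle identity becomes $(yz)x=\delta\big((yx)z-y(xz)\big)$.
\item Identity (\ref{third}) becomes $(zy)x=\delta\big(z(yx)+(zx)y\big)$.
\end{itemize}
I will double-check each sign carefully, since the whole argument rests on them.

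\textbf{Step 2 (linear algebra on monomials).} For fixed $x,y,z$ there are twelve degree-$3$ monomials $(ab)c$ and $a(bc)$ with $\{a,b,c\}=\{x,y,z\}$. The identity $a(bc)=-a(cb)$ reduces the right-normed ones to three. I will then use the three remaining identities with all permutations of the variables.
\begin{itemize}
\item Combining (\ref{first}) with (\ref{third}), after rewriting $z(yx)=-z(xy)$, yields relations such as $(1+\delta)\,x(yz)=(\delta-1)\,(xy)z$ together with expressions for $(xz)y$.
\item Feeding these into the middle identity and its permutations should force, for $\delta\neq 1$, the relations $(xy)z=-(xz)y$ and $(xy)z=-x(yz)$, i.e.\ anti-right-commutativity and antiassociativity.
\item With these, every degree-$3$ monomial is $\pm(xy)z$, and substituting back into (\ref{first}) gives $(1+2\delta)(xy)z=0$.
\end{itemize}
The $\delta$-Lie identity behaves in the same way in Lemmas \ref{3}, \ref{ancommass} and \ref{5}, where antiassociativity produces the coefficient $-\frac12$. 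So the expected outcome is: for $\delta\neq-\frac12$, all products of length $3$ vanish and $L$ is $2$-step nilpotent. For $\delta=-\frac12$, the algebra is exactly antiassociative and anti-right-commutative.

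\textbf{Step 3 (converse).} I will verify directly that every $2$-step nilpotent algebra, and for $\delta=-\frac12$ every antiassociative anti-right-commutative algebra, satisfies the translated identities of Step 1. In the $2$-step nilpotent case this is trivial. For $\delta=-\frac12$, antiassociativity and anti-right-commutativity together imply $x(yz)=-(xy)z=(xz)y=-x(zy)$, so all monomials are $\pm(xy)z$. Each identity then reduces to a numerical check of the form $1=-\frac12(-1-1)$. Hence the two categories coincide.

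\textbf{Main obstacle.} The main difficulty is Step 2: choosing which permutations of the three translated identities to combine so that the system actually closes, rather than merely producing consequences. I expect it to be cleanest to write all identities in the basis $\{(xy)z,(xz)y,(yx)z,(yz)x,(zx)y,(zy)x\}$, after eliminating the right-normed monomials via (\ref{first}). The middle identity and (\ref{third}) then give a homogeneous linear system in these six unknowns. Its solution space should be spanned by the sign pattern of anti-right-commutativity, and nonzero solutions should exist only when $\delta=-\frac12$. Throughout, the cases $\delta=-1$ and $\delta=0$ must be watched separately, since some coefficients like $1+\delta$ vanish there.
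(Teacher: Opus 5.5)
Your strategy is the paper's. You eliminate $\vdash$ via $x\vdash y=-y\dashv x$, rewrite the $\delta$-di-identities for a single product, and solve the linear relations among degree-$3$ monomials. Step~1 is correct. In fact your choice $xy:=x\dashv y$ is exactly the one under which the three identities displayed in the paper's proof hold; the product $x\vdash y$ named in the paper's text gives the mirror-image identities instead. Step~3 is a correct addition that the paper does not carry out.

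The gap is Step~2. It is only a forecast, and the one explicit relation in it is false: $(1+\delta)\,x(yz)=(\delta-1)\,(xy)z$ fails at $\delta=-\frac12$. Combined with antiassociativity it gives $2\delta\,(xy)z=0$. That would force every $(-\frac12)$-Lie dialgebra to be $2$-step nilpotent, contradicting case~2. By your own Step~3, the algebra $\mathfrak A$ of Remark~\ref{dimaac}, where $(e_1e_2)e_3=-e_7$, is a counterexample. The correct coefficient on the left is $1-\delta$.

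Step~2 closes in three lines once you notice that each translated identity involves only monomials with a common first letter. Fix $a,b,c$ and put $u=(ab)c$, $v=(ac)b$, $w=a(bc)$. Then the translations of \eqref{first}, \eqref{third} and of the middle identity, suitably relabelled, read
\[
w=\delta(u-v),\qquad u=\delta(v+w),\qquad v=\delta(u-w).
\]
No use of the bar identity is needed. Adding the first two gives $(1-\delta)(u+w)=0$. Subtracting the first from the third gives $(1-\delta)(v-w)=0$. So for every $\delta\neq1$ you get $w=v=-u$, which is antiassociativity plus anti-right-commutativity. The first equation then becomes $(1+2\delta)u=0$.

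The paper uses the same three identities but combines them into $\frac{2\delta^2-\delta-1}{\delta}(yx)z=0$. That divides by $\delta$ and so treats $\delta=0$ separately. Your ordering needs no case distinction at $\delta=0$ or $\delta=-1$, and it mirrors Lemmas~\ref{3} and~\ref{5}.
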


\begin{proof}
By using new one multiplication $xy:=x  \vdash y = -  y  \dashv x,$
 replacing 
$\{ x,y,z\} \rightarrow \{z,x,y\}$ in \eqref{first} and 
$x \leftrightarrow y$ in \eqref{third}, the last di-identities can be rewritten 
as follows

  \begin{longtable}{rcl}
$(yx)z $ & $=$ &$\delta \big( y(xz)+(yz)x \big),$\\
$\delta (yx)z$ & $=$ &$  (yz)x+ \delta y(xz),$\\
$\delta (yx)z$ & $=$ &$  y(xz)+ \delta (yz)x.$\\
\end{longtable}
Hence, we have the following situations:
\begin{enumerate}[(A)]
    
\item If $\delta=0$, then $(xy)z\ =\ x(yz) \ =\ 0.$ 
\item If $\delta\notin \big\{ 0, -\frac 12\big\},$ then summarizing the last two identities and 
subtracting $\frac{1+\delta}{\delta}$ times of the first identity, we have 
$\frac{2\delta^2-\delta-1}{\delta} (yx)z \ =\ 0.$
The last gives $(xy)z\ =\ x(yz)\ =\ 0.$ 

\item If $\delta=-\frac 12,$ then
$(xy)z\ = \ -\ x(yz) \ = \ -\ (xz)y.$
\end{enumerate}
\end{proof}

\subsection{Antiassociative anti-right-commutative algebras}

\begin{remark}\label{dimaac}
It is known that each antiassociative algebra is $3$-step nilpotent, and obviously, each $2$-step nilpotent algebra is antiassociative anti-right-commutative.
Taking the classification of $5$-dimensional antiassociative algebras from {\rm \cite{FKS}}, 
we can see that there are no non-$2$-step nilpotent antiassociative anti-right-commutative algebras with dimensional less than $6.$
Taking the classification of $6$-dimensional antiassociative anticommutative algebras from {\rm \cite{CKLS}}, we see that there are no non-$2$-step nilpotent  antiassociative anticommutative algebras with dimension less than $7.$
We can also present the following examples
of antiassociative anticommutative, and 
antiassociative $\big($non-anticommutative$\big)$ anti-right-commutative algebras in dimension $7:$ 

\begin{longtable}{|lcllllll|}
\hline
${\mathfrak A}$
&$:$& $e_1e_2=e_4$ & $e_1e_3=e_5$ & $e_1e_6=e_7$ & $e_2e_3=e_6$ & $e_2e_5=-e_7$ & $e_3e_4=e_7$ \\
& & $e_2e_1=-e_4$ & $e_3e_1=-e_5$ & $e_6e_1=-e_7$ & $e_3e_2=-e_6$ & $e_5e_2=e_7$ & $e_4e_3=-e_7$ \\
\hline${\mathfrak B}$ 
&$:$& $e_1e_1=e_7$ &\multicolumn{5}{l|}{and the multiplication rules from ${\mathfrak A}$}  \\
\hline
\end{longtable}   
\noindent But the question of the existence of a $6$-dimensional non-$2$-step nilpotent antiassociative  $\big($non-anticommutative$\big)$ anti-right-commutative  algebra is still open.
If it exists, it should be a one-dimensional central extension of a $2$-step nilpotent $5$-dimensional algebra, but its classification has not obtained.  
\end{remark}

\begin{proposition}\label{basaac}
        Let $\mathfrak{aar}(X)$ be the free antiassociative anti-right-commutative algebra, generated by the set $X=\big\{x_i \big\}_{i \in I}.$ 
Then the following set of elements 
\begin{center}
    ${\mathfrak B}=\big\{  \ x_i,\  x_{i}x_{j},\  x_{i} (x_{j_1}x_{j_2})  \ \big\}_{i, j, j_1,j_2 \in I}^{ j_1 < j_2}$
\end{center}
gives a basis of $\mathfrak{aar}(X).$ 
    \end{proposition}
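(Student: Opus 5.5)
The plan is to show that $\mathfrak B$ spans $\mathfrak{aar}(X)$ by reducing every monomial with the defining identities, and then to prove linear independence with an explicit model algebra. The defining identities are
\[
(xy)z=-x(yz), \qquad (xy)z=-(xz)y .
\]

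\textbf{Spanning in degree $3$.} Every degree-$3$ monomial can be written through right-normed ones, since $(xy)z=-x(yz)$. Combining the two identities gives
\[
x(yz)=-(xy)z=(xz)y=-x(zy),
\]
so $x(yz)$ is skew-symmetric in $y,z$. Since we work over $\mathbb C$ (characteristic $\neq 2$), this also forces $x(yy)=0$. Hence degree-$3$ elements are spanned by $x_i(x_{j_1}x_{j_2})$ with $j_1<j_2$.

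\textbf{Vanishing in degree $\geq 4$.} I will check the standard fact, recalled in Remark \ref{dimaac}, that antiassociative algebras are $3$-step nilpotent. Using only antiassociativity, compute $((xy)z)t$ in two ways:
\[
((xy)z)t=-(xy)(zt)=x(y(zt)),
\]
\[
((xy)z)t=-(x(yz))t=x((yz)t)=-x(y(zt)).
\]
Therefore $2x(y(zt))=0$, so $x(y(zt))=0$. Every degree-$4$ monomial is $\pm$ one of these by reassociation, so all degree-$4$ products vanish, and hence so do all products of higher degree. It follows that $\mathfrak B$ spans $\mathfrak{aar}(X)$.

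\textbf{Linear independence via a model.} Let $A$ be the vector space with basis formally indexed by $\mathfrak B$: symbols $x_i$, $x_ix_j$, and $x_i(x_{j_1}x_{j_2})$ for $j_1<j_2$. Extend the last family by setting
\[
x_i(x_kx_j):=-x_i(x_jx_k), \qquad x_i(x_jx_j):=0 .
\]
Define the product $\ast$ on basis elements by
\[
x_i\ast x_j=x_ix_j,\qquad x_i\ast(x_jx_k)=x_i(x_jx_k),\qquad (x_ix_j)\ast x_k=-x_i(x_jx_k),
\]
and let every other product of basis elements be zero, namely every product whose total degree is at least $4$.

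Both identities are multilinear, so it suffices to check them on basis elements. If the total degree exceeds $3$, both sides vanish. In degree $3$, antiassociativity $(x_i\ast x_j)\ast x_k=-x_i\ast(x_j\ast x_k)$ holds by the definition of $\ast$. Anti-right-commutativity holds because
\[
-(x_i\ast x_k)\ast x_j=x_i(x_kx_j)=-x_i(x_jx_k)=(x_i\ast x_j)\ast x_k .
\]
Thus $A$ lies in the variety. By the universal property of $\mathfrak{aar}(X)$, there is a homomorphism $\mathfrak{aar}(X)\to A$ with $x_i\mapsto x_i$. It sends each element of $\mathfrak B$ to the corresponding basis element of $A$, and these are linearly independent, so $\mathfrak B$ is linearly independent in $\mathfrak{aar}(X)$.

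\textbf{Main obstacle.} The only nonformal step is the degree-$4$ vanishing, which needs the two-way computation above and the fact that $2$ is invertible. The model-algebra verification is routine once the sign convention $x_i(x_kx_j)=-x_i(x_jx_k)$ is fixed consistently.
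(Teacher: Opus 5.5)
Your proof is correct, and its overall structure matches the paper's, but you add the independence argument that the paper does not actually give.

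\textbf{Spanning.} This half coincides with the paper's argument. The paper also discards words of length at least $4$ and reduces degree-$3$ words using antiassociativity and anti-right-commutativity. The paper simply quotes the $3$-step nilpotency of antiassociative algebras. You instead derive $x(y(zt))=0$ yourself, by computing $((xy)z)t$ in two ways and dividing by $2$.

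\textbf{Linear independence.} Here is the genuine difference. The paper only asserts that, due to the two identities, the words $x_i(x_{j_1}x_{j_2})$ with $j_1<j_2$ are independent. The identities by themselves show only that these words span the degree-$3$ part. Your graded model algebra $A$ is built directly on the normal form, with the conventions $x_i(x_kx_j)=-x_i(x_jx_k)$ and $x_i(x_jx_j)=0$. Combined with the universal property of $\mathfrak{aar}(X)$, it is what actually certifies that no further relations occur. The verification is short because both identities are multilinear and $A$ vanishes in degree $\geq 4$, so only the single degree-$3$ check on generators is needed.

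So your argument turns the paper's sketch into a complete proof. The only assumption beyond the paper's is characteristic $\neq 2$, used for $x(yy)=0$ and for $2x(y(zt))=0$, which is harmless over $\mathbb{C}$.
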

\begin{proof}
Let us remember that $\mathfrak{aar}(X)$ is a $3$-step nilpotent algebra, 
hence, $x_{k_1}\ldots x_{k_t}=0$ for $t \ge 4.$
All basis elements are constructed from words of length $1$, $2$, or $3.$
Due to antiassociativity and anti-right-commutativity, all words given by 
$\big\{\ x_{i} (x_{j_1}x_{j_2})\  \big\}_{i,   j_1,j_2 \in I}^{ j_1 < j_2}$ are independent.
Hence, ${\mathfrak B}$ is a basis of  $\mathfrak{aar}(X).$ \end{proof}

Let us construct multilinear base elements of degree $3$ for a free antiassociative anti-right-commutative algebra. 
Below, we give a presentation of $9$ non-base elements of degree $3$ as a linear
combination of the base elements of degree $3$:

\begin{longtable}{rclrclrcl}
$a (c  b) $&$ =$&$-a (bc),$ & $(a b)  c $&$ =$&$-a (bc),$ &  $(a c)  b $&$ =$&$ a (bc),$ \\
$b (c  a) $&$ =$&$-b (ac),$ & $(b a)  c $&$ =$&$-b (ac),$ &  $(b c)  a $&$ =$&$ b (ac),$ \\
$c (b  a) $&$ =$&$-c (ab),$ & $(c a)  b $&$ =$&$-c (ab),$ &  $(c b)  a $&$ =$&$ c (ab).$ \\
\end{longtable}

Following the approach in \cite{GK94}, we compute the dual operad $\mathfrak{aar}^!$, where the operad $\mathfrak{aar}$ governs the variety of antiassociative anti-right-commutative  algebras. Then,
 
\begin{longtable}{rcl}
$\big[[a \otimes x, b \otimes y], c \otimes z\big] $&$+$&$\big[[b \otimes y, c \otimes z], a \otimes x\big] \ +\  \big[[c \otimes z, a \otimes x], b \otimes y\big]$ \\
&$=$&  $((a  b)  c) \otimes ((x \bullet y) \bullet z) - (c  (a  b)) \otimes (z \bullet (x \bullet y))$ \\
&&$\quad - ((b  a)  c) \otimes ((y \bullet x) \bullet z) + (c  (b  a)) \otimes (z \bullet (y \bullet x))$ \\
&&$\quad + ((b  c)  a) \otimes ((y \bullet z) \bullet x) - (a  (b  c)) \otimes (x \bullet (y \bullet z))$ \\
&&$\quad - ((c  b)  a) \otimes ((z \bullet y) \bullet x) + (a  (c  b)) \otimes (x \bullet (z \bullet y))$ \\
&&$\quad + ((c  a)  b) \otimes ((z \bullet x) \bullet y) - (b  (c  a)) \otimes (y \bullet (z \bullet x))$ \\
&&$\quad - ((a  c)  b) \otimes ((x \bullet z) \bullet y) + (b  (a  c)) \otimes (y \bullet (x \bullet z))\ =$\\

\multicolumn{3}{l}{$\mbox{
\big(
by using the above presentation of the $9$ non-basis elements of degree $3$}$}\\ 
\multicolumn{3}{r}{$\mbox{
as a linear combination of basis elements, we have \big)}$}\\

&$=$& 
$(b(ac)) \otimes \big((y \bullet x) \bullet z +(y \bullet z) \bullet x + y \bullet (z \bullet x)+y \bullet (x \bullet z)  \big) -$\\

& & 
$(a(bc)) \otimes \big((x \bullet y) \bullet z  +x \bullet (y \bullet z) + x \bullet (z \bullet y) + (x \bullet z) \bullet y  \big) - $\\

&& 
$(c  (a  b)) \otimes  \big( z \bullet (x \bullet y)+  z \bullet (y \bullet x) + (z \bullet y) \bullet x + (z \bullet x) \bullet y \big).$

\end{longtable}
Hence, the dual  operad $\mathfrak{aar}^!$ is  equivalent to
 an operad that governs the anti-right-alternative identity\footnote{Also known as the pre-Jacobi-Jordan identity \cite{BBMM}.}:
\begin{longtable}{rcl}
$\big(x , y,  z\big)_{-1}^\bullet  $&$=$&$- \ \big(x , z,  y\big)_{-1}^\bullet.$
\end{longtable}

\begin{proposition}
Dual antiassociative anti-right-commutative algebras are Jacobi-Jordan admissible.    
\end{proposition}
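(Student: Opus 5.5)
Let $(A,\bullet)$ be a dual antiassociative anti-right-commutative algebra, i.e.\ an algebra satisfying the anti-right-alternative identity $(x,y,z)_{-1}^\bullet=-(x,z,y)_{-1}^\bullet$ obtained above. We must show that the anticommutator $x\circ y:=x\bullet y+y\bullet x$ (the $(-1)$-commutator $[x,y]_{-1}$) makes $A$ a Jacobi--Jordan algebra. Commutativity of $\circ$ is automatic. So the whole task is the Jacobi identity $\sum_{\rm cyc}(x\circ y)\circ z=0$.

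First, expand $\sum_{\rm cyc}(x\circ y)\circ z$ into the twelve degree-3 monomials. Each ordering $\sigma$ of $\{x,y,z\}$ occurs exactly once as $(\sigma_1\sigma_2)\sigma_3$ and once as $\sigma_3(\sigma_1\sigma_2)$, both with coefficient $+1$. Next, regroup the twelve terms as
\[
\sum_{\sigma\in S_3}\Big(\big(\sigma_1\bullet\sigma_2\big)\bullet\sigma_3+\sigma_1\bullet\big(\sigma_2\bullet\sigma_3\big)\Big)
=\sum_{\sigma\in S_3}(\sigma_1,\sigma_2,\sigma_3)_{-1}^\bullet .
\]
This is legitimate because the set of products $\sigma_3\bullet(\sigma_1\bullet\sigma_2)$ over all $\sigma$ coincides with the set of products $\tau_1\bullet(\tau_2\bullet\tau_3)$ over all $\tau$. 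Thus the Jacobi expression equals the full symmetrization of the $(-1)$-associator.

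Finally, split $S_3$ into the three pairs $\{\sigma,\sigma\circ(2\,3)\}$, that is, pairs with the same first entry and the last two entries swapped. By the anti-right-alternative identity each pair sums to zero, so the total vanishes. This is exactly the form of the identity obtained in the dual-operad computation above, which already exhibited the three grouped sums of four terms each.

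The only delicate point is the bookkeeping in the regrouping step: checking that the twelve monomials of the Jacobi sum really are the six associator pairs. I would verify this by listing the monomials once. One should also confirm the characteristic assumption: over $\mathbb{C}$, anti-right-alternativity is equivalent to its linearized form, which is the form used in the pairing step.
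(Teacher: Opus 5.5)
Your proof is correct and follows the paper's route. You identify the Jacobi expression for $x\circ y=x\bullet y+y\bullet x$ with $\sum_{\sigma\in\mathbb S_3}(x_{\sigma(1)},x_{\sigma(2)},x_{\sigma(3)})_{-1}^\bullet$, then cancel it by pairing $\sigma$ with $\sigma\circ(2\,3)$ using the dual identity. The one difference is in your favour: you verify the regrouping yourself instead of citing the known criterion, so you get the implication the argument needs (``symmetrized identity $\Rightarrow$ Jacobi-Jordan admissible''), whereas the paper states its citation only as the converse.
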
 

\begin{proof}
    Thanks to \cite{BBMM}, each Jacobi-Jordan admissible algebra satisfies 
  \begin{center}
        $\sum\limits_{\sigma \in \mathbb S_3} \big(x_{\sigma(1)}, x_{\sigma(2)}, x_{\sigma(3)} \big)_{-1}=0.$
  \end{center}
 In particular, each algebra that satisfies 
    $\big(x , y,  z\big)_{-1}  + \big(x , z,  y\big)_{-1} =0$ is Jacobi-Jordan admissible.
    Hence, each dual antiassociative anti-right-commutative algebra is Jacobi-Jordan admissible.    \end{proof}

 \section{$\delta$-Leibniz algebras}\label{dl}

 \subsection{Anti-Leibniz algebras}\label{antiL}

\begin{definition}[see \cite{antileib}]\label{deltantiLeibniz}
Let $L$ be an algebra, then   $L$ is called a (right)  anti-Leibniz  algebra if the following identity holds true:
    \begin{equation*} \label{antiLeibIdentity}
       (xy)z\ =\  -  (xz)y - x(yz).
\end{equation*}
\end{definition}

\begin{theorem}\label{simplantil}
    There are no simple anti-Leibniz algebras.
\end{theorem}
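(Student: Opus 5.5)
The plan is to produce a nonzero proper two-sided ideal in any nontrivial anti-Leibniz algebra $L$, splitting into two cases according to whether $L$ is commutative. First I exploit the symmetry of the defining identity. Written as
\[(xy)z+(xz)y\ =\ -\,x(yz),\]
its left-hand side is symmetric in $y,z$. Exchanging $y$ and $z$ therefore gives $x(yz)=x(zy)$, that is, $x(yz-zy)=0$ for all $x,y,z$. Hence every commutator $yz-zy$ lies in the right annihilator
\[\mathrm{Ann}_r(L)\ :=\ \big\{a\in L \ :\ xa=0 \ \text{for all } x\in L\big\}.\]

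Next I check that $\mathrm{Ann}_r(L)$ is a two-sided ideal. Let $a\in \mathrm{Ann}_r(L)$. Then $ya=0\in \mathrm{Ann}_r(L)$ trivially. Substituting $(x,a,y)$ into the identity gives $(xa)y=-(xy)a-x(ay)$. Since $xa=0$ and $(xy)a=0$, this yields $x(ay)=0$ for all $x$, so $ay\in \mathrm{Ann}_r(L)$. Now suppose $L$ is simple. If $\mathrm{Ann}_r(L)=L$, then $LL=0$, contradicting nontriviality. Hence $\mathrm{Ann}_r(L)=0$. By the first step all commutators then vanish, so $L$ is commutative.

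In the commutative case the identity becomes $(xy)z+(zx)y+(yz)x=0$, so $L$ is a Jacobi--Jordan (mock-Lie) algebra. It then suffices to show $L^2\neq L$, because $L^2$ is a nonzero ideal (as $L$ is nontrivial) and would therefore be proper, contradicting simplicity. For finite-dimensional $L$ I would cite the known result that Jacobi--Jordan algebras are nilpotent, which immediately gives $L^2\neq L$.

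The main obstacle is arbitrary dimension, where nilpotency is not available. There I would use that every Jacobi--Jordan algebra is a Jordan nilalgebra of index $3$ ($x^3=0$, obtained by putting $x=y=z$). By Zhevlakov's theorem a Jordan nilalgebra of bounded index is solvable, so $L^2\neq L$ again. If one prefers to avoid that heavy result, I would first try a direct argument: compute that $L^2L^2$, or $(L^2)^2$, is a proper ideal, using the identity $2(xy)y=-x(y^2)$, which follows from the defining identity by setting $y=z$.
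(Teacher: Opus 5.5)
Your argument is correct and is essentially the paper's. The paper uses the span $[L,L]_1$ of commutators as the ideal, checking $L[L,L]_1=0$ and $[L,L]_1L\subseteq[L,L]_1$, so simplicity forces either $LL=L[L,L]_1=0$ or commutativity; you use the larger ideal $\mathrm{Ann}_r(L)\supseteq[L,L]_1$, which is slightly quicker to verify and gives the same dichotomy. The commutative case is then handled identically (Jacobi--Jordan, hence a Jordan nilalgebra of index $3$, hence not simple), with your citations of finite-dimensional nilpotency and of Zhevlakov's theorem spelling out what the paper only asserts, so the tentative direct fallback in your last sentence is not needed.
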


\begin{proof}
    Firstly, we observe that the vector space 
    $[L,L]_{1}$ gives an ideal of $L$.
    It follows from some direct computations given below:

\begin{longtable}{lclclcl}
    
$x[y,z]_1$&$=$&$x(yz)-x(zy)$ &  $=$ &
$ - (xy)z-(xz)y+(xz)y+(xy)z$ & $=$ & $0;$\\ 

$[y,z]_1x$&$=$&$(yz)x-(zy)x$ & $=$&
$-\big((yx)z+y(zx)-(zx)y-z(yx)\big)$ & $=$&
$[zx,y]_1+[z,yx]_1.$ 
\end{longtable}

Hence, we have two opportunities: 
\begin{enumerate}
    \item[{\rm (A)}]  $[L,L]_{1} =\big\{0\big\},$
    i.e., $L$ is a commutative algebra. 
    But each commutative anti-Leibniz algebra is Jacobi-Jordan, 
    i.e., a Jordan nil-algebra, and it is non-simple.
 \item[{\rm (B)}]  $[L,L]_{1} =L,$ but it means that 
\begin{center}
    $xy\ =\ \big(\sum\limits_{i=1}^n[x_i,y_i]_1\big) 
\big( \sum\limits_{j=1}^m[z_j,t_j]_1\big) \ = 
\  \sum\limits_{1\le i\le n, 1\leq j \leq m}[x_i,y_i]_1[z_j,t_j]_1 \ = \ 0.$
\end{center}
It gives that $L$ has the trivial multiplication and can not be simple.\end{enumerate}\end{proof}

\begin{theorem}\label{cenrext}
    Each symmetric anti-Leibniz algebra\footnote{i.e., left and right anti-Leibniz algebra.} is a central extension of a suitable Jacobi-Jordan algebra. 
\end{theorem}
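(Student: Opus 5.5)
Let $L$ be a symmetric anti-Leibniz algebra, so it satisfies the right identity $(xy)z=-(xz)y-x(yz)$ and the left identity $x(yz)=-(xy)z-y(xz)$. The plan is to show that the commutator space $[L,L]_1$ is a central ideal. Then $L/[L,L]_1$ is a commutative anti-Leibniz algebra, that is, a Jacobi-Jordan algebra, and $L$ is a central extension of it.

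\textbf{Step 1: right annihilation.} From the proof of Theorem \ref{simplantil}, the right identity alone gives $x[y,z]_1=0$ for all $x,y,z$.

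\textbf{Step 2: left annihilation.} I will use the left identity symmetrically to get $[y,z]_1x=0$. Writing the left identity with the roles of the variables exchanged,
\[
(yz)x=-y(zx)-z(yx), \qquad (zy)x=-z(yx)-y(zx).
\]
The right-hand sides coincide, so subtracting yields $[y,z]_1x=(yz)x-(zy)x=0$. Hence $[L,L]_1$ is contained in the two-sided annihilator of $L$. In particular it is an ideal, and it lies in the center in the sense $[L,L]_1L=L[L,L]_1=0$.

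\textbf{Step 3: the quotient.} The quotient $J=L/[L,L]_1$ is commutative and inherits the anti-Leibniz identity. As noted in case (A) of Theorem \ref{simplantil}, a commutative anti-Leibniz algebra is Jacobi-Jordan. Concretely, the identity becomes $(xy)z+(zx)y+(yz)x=0$ after using commutativity.

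\textbf{Step 4: the extension.} Choose a linear section $J\to L$ and a complement $V$, so that $L=J\oplus [L,L]_1$ as vector spaces. Then
\[
(a+u)(b+v)=a\cdot b+\theta(a,b), \qquad \theta\colon J\times J\to [L,L]_1,
\]
because $[L,L]_1$ annihilates everything. This exhibits $L$ as the central extension $J_\theta$.

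The main obstacle is bookkeeping. One must check that the anti-Leibniz identities on $L$ translate into the appropriate cocycle condition on $\theta$. This is automatic, since $L$ itself satisfies the identities and every product lands back through the $J$-part. One should also confirm that "central extension" in the paper's sense requires exactly an annihilator ideal, which Step 2 provides.
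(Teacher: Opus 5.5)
Your proof is correct and follows the paper's argument. Both derive $x[y,z]_1=0$ from the right identity and $[y,z]_1x=0$ from the left identity, so the commutators lie in the annihilator and the quotient is a commutative anti-Leibniz, i.e.\ Jacobi-Jordan, algebra. The only cosmetic difference is that the paper quotients by the full annihilator ${\rm Ann}(L)$ while you quotient by the span of the commutators; either choice works, since quotienting by any subspace $I$ with $[L,L]_1\subseteq I\subseteq {\rm Ann}(L)$ gives a commutative central quotient.
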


\begin{proof}
Let $L$ be a symmetric anti-Leibniz algebra. 
Doing some computation similar to the proof of the previous theorem,  it is easy to see that 
\begin{center}
    $x[y,z]_1\ =\ 0$ and $[y,z]_1x \ = \ 0.$
\end{center}
Hence, $ \big\langle [y,z]_1 \ | \  y,z \in L \big\rangle$ is in the annihilator of $L.$
It follows that $L$ is a central extension of $L \big/{\rm Ann}(L)$
and $L \big/{\rm Ann}(L)$ is commutative, hence $L \big/{\rm Ann}(L)$  is Jacobi-Jordan.
    \end{proof}

 \subsection{$\delta$-Leibniz algebras}\label{deltL}

\begin{definition}\label{deltaLeibniz}
Let $L$ be an algebra and $\delta$ be a fixed complex number. Then   $L$ is called a  $\delta$-Leibniz  algebra if the following identity holds true:
    \begin{equation} \label{antiLeibIdentity}
       (xy)z\ =\  \delta \big((xz)y+x(yz)\big).
\end{equation}
\end{definition}

Let $\mathcal{L}eib_{\delta}$ be a variety of $\delta$-Leibniz algebras  for some $\delta$. Then $\mathcal{L}eib_0$ is the variety of  $2$-step right nilpotent algebras, and $\mathcal{L}eib_1$ is the variety of Leibniz  algebras.
Hence, the varieties of $\delta$-Leibniz   algebras give a type of continuous deformation 
of the variety of Leibniz algebras to the variety of  $2$-step right nilpotent algebras and vice versa.

\begin{theorem}
Let $L$ be a $\delta_1$- and $\delta_2$-Leibniz algebra, where $\delta_1 \neq \delta_2$. Then $L$ is a $2$-step nilpotent algebra.
\end{theorem}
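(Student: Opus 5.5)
The plan is to subtract the two defining identities and then use the resulting relation twice: first to kill all left-normed products of length three, then to kill the right-normed ones. By Definition \ref{deltaLeibniz}, $L$ satisfies \eqref{antiLeibIdentity} for both parameters:
\begin{center}
$(xy)z \ = \ \delta_1\big((xz)y+x(yz)\big)$ \quad and \quad $(xy)z \ = \ \delta_2\big((xz)y+x(yz)\big).$
\end{center}
Subtracting the second identity from the first gives $(\delta_1-\delta_2)\big((xz)y+x(yz)\big)=0$. Since $\delta_1\neq\delta_2$, we obtain
\begin{center}
$(xz)y+x(yz)\ =\ 0$ \quad for all $x,y,z\in L.$
\end{center}

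Substituting this back into either of the two identities yields $(xy)z=\delta_1\cdot 0=0$ for all $x,y,z$. Hence every left-normed product of three elements vanishes.

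For the right-normed products, return to the relation $(xz)y+x(yz)=0$. Its first summand $(xz)y$ is itself a left-normed triple product, so it is already zero. Therefore $x(yz)=0$ for all $x,y,z\in L$.

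Both bracketings of degree three are now zero, so every product of three elements is zero. That is, $L^3=0$ and $L$ is $2$-step nilpotent. The argument is a direct linear elimination, and no step presents a real obstacle; the only point needing care is noticing that $(xz)y$ is again a left-normed triple product, which is what lets the relation also kill $x(yz)$.
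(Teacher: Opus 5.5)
Your proof is correct and follows the paper's argument: subtract the two identities to get $(xz)y+x(yz)=0$, deduce $(xy)z=0$, and then $x(yz)=0$. The one difference is in the last step: you reuse the relation $(xz)y+x(yz)=0$, whereas the paper goes back to the defining identity, which implicitly requires picking whichever of $\delta_1,\delta_2$ is nonzero; your version avoids that.
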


\begin{proof}
It is easy to see that
$$(xy)z\ =\ \delta_1 \big((xz)y+x(yz)\big) \ =\ \delta_2 \big((xz)y+x(yz)\big).$$
Since $\delta_1$ and $\delta_1$ are different, we obtain the following identity:
\begin{center}
    $(xz)y+ x(yz)\ =\ 0.$
    \end{center}

From this, we obtain the relation $(xy)z=0$, and from the definition of  $\delta$-Leibniz algebras, we derive the identity $x(yz)=0$. Therefore, $L$ is a 2-step nilpotent algebra. \end{proof}

\begin{corollary}    
 $\bigcap\limits_{\delta} \mathcal{L}eib_{\delta}$ is the variety of $2$-step nilpotent algebras.
\end{corollary}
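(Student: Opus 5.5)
The corollary asserts an equality of varieties, so the plan is to prove the two inclusions separately, relying mainly on the preceding theorem.

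For the inclusion $\bigcap_{\delta} \mathcal{L}eib_{\delta} \subseteq$ ($2$-step nilpotent algebras), take an algebra $L$ lying in every $\mathcal{L}eib_{\delta}$. In particular $L$ lies in $\mathcal{L}eib_{\delta_1}$ and $\mathcal{L}eib_{\delta_2}$ for two distinct values, say $\delta_1=0$ and $\delta_2=1$. The preceding theorem then says directly that $L$ is $2$-step nilpotent. The choice $\delta_1=0$ also gives a shortcut: $L\in\mathcal{L}eib_0$ already forces $(xy)z=0$. Then $L\in\mathcal{L}eib_1$ gives $0=(xz)y+x(yz)=x(yz)$, so all products of length $3$ vanish under either bracketing.

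For the reverse inclusion, let $L$ be $2$-step nilpotent, so $(xy)z=0$ and $x(yz)=0$ for all $x,y,z$. Then both sides of \eqref{antiLeibIdentity} are zero for every $\delta$. Hence $L\in\mathcal{L}eib_{\delta}$ for all $\delta$, and the two inclusions together give the equality.

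Neither direction should present a real obstacle. The only point to state explicitly is that the intersection is taken over at least two distinct values of $\delta$, which is exactly what lets the preceding theorem apply.
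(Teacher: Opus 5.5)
Your proposal is correct and follows the paper's approach: the corollary comes from the preceding theorem applied to any two distinct values of $\delta$ (your shortcut with $\delta=0,1$ is a valid special case), together with the trivial reverse inclusion that every $2$-step nilpotent algebra satisfies all $\delta$-Leibniz identities.
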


The classification of two-dimensional algebras over an algebraically closed field was established in \cite{kv16}. We present a classification of two-dimensional $\delta$-Leibniz algebras\footnote{In particular, it corrects a previously incorrect classification of anti-Leibniz algebras given in \cite[Theorem 2.3]{antileib}.}, for which direct computations can be achieved.

\begin{theorem}\label{cldn}
    Let $L$ be a nontrivial $2$-dimensional $\delta$-Leibniz algebra, then it is isomorphic to one of the algebras listed below

\begin{longtable}{|l|c|l|l|l|l|}

\hline
 ${\bf A}_3$ &$\delta$ & $e_1e_1= e_2$&  
 $e_1e_2= 0$ & $e_2e_1=0 $& $e_2e_2=0$ \\

   \hline
${\bf B}_2(1)$ &$0$ &  $e_1e_1= 0$&  
 $e_1e_2= 0$ & $e_2e_1=e_1 $& $e_2e_2=0$ \\

\hline
 ${\bf B}_2(0)$ &$1$ &  $e_1e_1= 0$&  
 $e_1e_2= e_1$ & $e_2e_1=0 $& $e_2e_2=0$ \\
  
 \hline
 ${\bf B}_3$ &$1$ &  $e_1e_1= 0$&  
 $e_1e_2= e_2$ & $e_2e_1=-e_2 $& $e_2e_2=0$ \\

  \hline
 ${\bf D}_2(0,0)$ &$\frac 12$ &  
$e_1e_1= e_1$&   $e_1e_2= 0$ & 
$e_2e_1=0 $& $e_2e_2=0$ \\

  \hline
 ${\bf D}_2(0,1)$ &$\frac 12$ &  
$e_1e_1= e_1$&   $e_1e_2= 0$ & 
$e_2e_1=e_2 $& $e_2e_2=0$ \\

\hline
 ${\bf D}_2(1,1)$ &$\frac 12$ &  
$e_1e_1= e_1$&   $e_1e_2= e_2$ & 
$e_2e_1=e_2 $& $e_2e_2=0$ \\

\hline
${\bf E}_1(1,0,0,1)$ &$\frac 12$ &  
$e_1e_1= e_1$&   $e_1e_2= e_1$ & 
$e_2e_1=e_2 $& $e_2e_2=e_2$ \\

\hline
 ${\bf E}_1(0,0,0,0)$ &$\frac 12$ &  
$e_1e_1= e_1$&   $e_1e_2= 0$ & 
$e_2e_1=0 $& $e_2e_2=e_2$ \\


\hline

\end{longtable}
\noindent Here: 
notations in  the first column are  given from {\rm \cite{kv16}} and 
the number $\gamma$ in the second column means that it is a $\gamma$-Leibniz algebra. 

\end{theorem}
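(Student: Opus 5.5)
The plan is to use the Kaygorodov--Volkov classification of all $2$-dimensional algebras over an algebraically closed field \cite{kv16}. It lists every nontrivial $2$-dimensional algebra, up to isomorphism, as a member of the families ${\bf A}_1(\alpha),\ {\bf A}_2,\ {\bf A}_3,\ {\bf A}_4(\alpha),\ {\bf B}_1(\alpha),\ {\bf B}_2(\alpha),\ {\bf B}_3,\ {\bf C}(\alpha,\beta),\ {\bf D}_1(\alpha,\beta),\ {\bf D}_2(\alpha,\beta),\ {\bf D}_3(\alpha,\beta),\ {\bf E}_1(\alpha,\beta,\gamma,\delta),\dots,{\bf E}_5(\alpha)$. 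For each family I will impose the identity \eqref{antiLeibIdentity}, $(xy)z=\delta\big((xz)y+x(yz)\big)$, on all basis triples $(e_i,e_j,e_k)$, $i,j,k\in\{1,2\}$. This gives eight vector equations, i.e. sixteen polynomial equations in the structure parameters and $\delta$. The identity is trilinear, so checking it on basis triples suffices.

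\emph{Step 1: reduce to nilpotent versus non-nilpotent.}
\begin{itemize}
\item If $L^2$ is one-dimensional and $L^2L=LL^2=0$, the identity holds for every $\delta$. In dimension $2$ the only such nontrivial algebra is ${\bf A}_3$ (note $e_1e_1=e_2$ with everything else zero). This explains the entry $\delta$ in the table.
\item Otherwise some product of length three is nonzero, and the equations force relations on $\delta$.
\end{itemize}

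\emph{Step 2: algebras with an idempotent.} If $e_1e_1=e_1$, the triple $(e_1,e_1,e_1)$ gives $e_1=2\delta e_1$, so $\delta=\frac12$. The families ${\bf C},{\bf D}_i,{\bf E}_i$ (those containing $e_1e_1=e_1$ after normalisation) therefore only contribute for $\delta=\frac12$. Within them I solve the remaining seven triples with $\delta=\frac12$. For example, in ${\bf D}_2(\alpha,\beta)$ ($e_1e_1=e_1$, $e_1e_2=\alpha e_2$, $e_2e_1=\beta e_2$) the triples $(e_1,e_2,e_1)$, $(e_2,e_1,e_1)$ and $(e_1,e_1,e_2)$ give
\[
\alpha\beta=\tfrac12(\alpha\beta+\alpha^2),\qquad
\beta^2=\tfrac12(\beta^2+\beta),\qquad
\alpha=\tfrac12(\alpha\beta+\alpha).
\]
These yield $(\alpha,\beta)\in\{(0,0),(0,1),(1,1)\}$. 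The ${\bf E}_1$ family is handled the same way and gives the two listed points.

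\emph{Step 3: the families without idempotents.} For ${\bf A}_i$ and ${\bf B}_i$ (products like $e_1e_1=e_2$ or $e_2e_1=e_1$), the triple $(e_2,e_1,e_1)$ in ${\bf B}_2(\alpha)$ forces either $\delta=0$ or conditions singling out $\alpha=1$ with $\delta=0$, and $\alpha=0$ with $\delta=1$. ${\bf B}_3$ is the non-Lie Leibniz-type algebra that works only for $\delta=1$. The case $\delta=1$ should reproduce the known list of $2$-dimensional Leibniz algebras, and the case $\delta=0$ the right $2$-step nilpotent ones; I will use both as sanity checks.

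\emph{Main obstacle.} The hard part is the bookkeeping for the multi-parameter families ${\bf E}_1,\dots,{\bf E}_5$ and ${\bf C}(\alpha,\beta)$. There the polynomial system has many branches, and I must check that no solution exists for $\delta\ne\frac12$, and that solutions at $\delta=\frac12$ are not isomorphic to other listed ones. I will first exploit the idempotent argument of Step 2 to fix $\delta=\frac12$. I will then use the triples $(e_i,e_i,e_j)$ to pin down the parameters one at a time, and finally invoke the isomorphism conditions already recorded in \cite{kv16} to remove duplicates.
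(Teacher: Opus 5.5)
Your route is the paper's route. The paper gives nothing beyond ``direct computations'' on the list of \cite{kv16}. Your way of organising them is sound: 2-step nilpotent gives ${\bf A}_3$ for every $\delta$, an idempotent $e$ forces $\delta=\frac12$ via the triple $(e,e,e)$, and the remaining triples are then solved family by family.

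However, several of the sample computations you wrote are wrong.
\begin{itemize}
\item In ${\bf D}_2(\alpha,\beta)$ the triple $(e_1,e_2,e_1)$ gives $\alpha\beta=\frac12(\alpha+\alpha\beta)$, and $(e_1,e_1,e_2)$ gives $\alpha=\frac12(\alpha\beta+\alpha^2)$. You swapped the left-hand sides. This is harmless only by luck, since both systems reduce to $\beta^2=\beta$, $\alpha\beta=\alpha$, $\alpha^2=\alpha$.
\item In ${\bf B}_2(\alpha)$ the triple $(e_2,e_1,e_1)$ is vacuous: both sides vanish because $e_1e_1=0$. The actual constraints are $(1-\delta)(1-\alpha)^2=0$ from $(e_1,e_2,e_2)$, $(1-\delta)\alpha(1-\alpha)=0$ from $(e_2,e_1,e_2)$, and $\delta\alpha=0$ from $(e_2,e_2,e_1)$. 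Together they give $(\alpha,\delta)\in\{(0,1),(1,0)\}$.
\item ${\bf B}_3$ is anticommutative, so it is the non-abelian $2$-dimensional Lie algebra, not a non-Lie Leibniz algebra; the non-Lie ones here are ${\bf A}_3$ and ${\bf B}_2(0)$. Its restriction $\delta=1$ comes from $(e_1,e_2,e_1)$, which gives $-e_2=-\delta e_2$.
\item In ${\bf C}(\alpha,\beta)$ the idempotent is $e_2$, not $e_1$.
\end{itemize}

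More substantively, the check you promise at $\delta=\frac12$, that the solutions are pairwise non-isomorphic, fails. In ${\bf E}_1(1,0,0,1)$ take $f_1=e_1$ and $f_2=e_1-e_2$. Then $f_1f_1=f_1$, $f_1f_2=0$, $f_2f_1=f_2$, $f_2f_2=0$, which is exactly ${\bf D}_2(0,1)$. So the ${\bf E}_1$ family cannot supply ``the two listed points'' as new isomorphism classes. The table in the statement lists this algebra twice.

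Since the list in \cite{kv16} is irredundant, at most one of these two parameter values can be an admissible representative there. A correct run of your procedure therefore gives exactly five isomorphism classes of nontrivial $\frac12$-Leibniz algebras. They are represented by ${\bf A}_3$, ${\bf D}_2(0,0)$, ${\bf D}_2(0,1)\cong{\bf E}_1(1,0,0,1)$, ${\bf D}_2(1,1)$ and ${\bf E}_1(0,0,0,0)$. The theorem as phrased (``isomorphic to one of'') survives. Your write-up should record this coincidence rather than assert that the listed algebras are distinct.
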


\begin{proposition}\label{nilpL}
    Let $L$ be an $n$-dimensional nilpotent $\delta$-Leibniz algebra $\big( \delta \notin\{ 0,1\} \big)$,
    then the nilpotency index of $L$ is not greater than $n.$
\end{proposition}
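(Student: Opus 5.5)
The plan is to reduce the statement to the standard strict-descent argument for a chain of powers. The one nontrivial input is that in a $\delta$-Leibniz algebra with $\delta\neq 0$, every product can be rewritten in left-normed form. I use the convention that the nilpotency index is the least $s$ with $L^{s+1}=0$, where $L^{k}=\sum_{i+j=k}L^{i}L^{j}$. I read the bound as $L^{n+1}=0$, since the algebra ${\bf A}_3$ from Theorem \ref{cldn} has $n=2$, $L^2\neq 0$ and $L^3=0$, which is consistent with that reading.

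First I introduce the left-normed (right) powers $L^{\langle 1\rangle}=L$ and $L^{\langle k+1\rangle}=L^{\langle k\rangle}L$, and I aim to prove $L^{k}=L^{\langle k\rangle}$ for all $k$. Since $\delta\neq 0$, the defining identity \eqref{antiLeibIdentity} can be solved for a right-nested product:
\begin{center}
$x(yz)\ =\ \delta^{-1}(xy)z-(xz)y.$
\end{center}
I prove by induction on $k$ that $L^{i}L^{\langle j\rangle}\subseteq L^{\langle i+j\rangle}$. A monomial $u(vw)$ with $u\in L^{\langle i\rangle}$, $v\in L^{\langle j-1\rangle}$, $w\in L$ becomes $\delta^{-1}(uv)w-(uw)v$. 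The first term lies in $L^{\langle i+j-1\rangle}L=L^{\langle i+j\rangle}$ by induction. The second term lies in $L^{\langle i+1\rangle}L^{\langle j-1\rangle}$, which the induction on the second degree $j$ places in $L^{\langle i+j\rangle}$. I would set up the double induction on $j$ first and then on total degree. Together with $L^{\langle k\rangle}\subseteq L^{k}$, this gives equality.

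Second, I use nilpotency to show the chain $L=L^{\langle 1\rangle}\supseteq L^{\langle 2\rangle}\supseteq\cdots$ is strictly decreasing until it reaches $0$. If $L^{\langle k\rangle}=L^{\langle k+1\rangle}=L^{\langle k\rangle}L$, then iterating gives $L^{\langle k\rangle}=L^{\langle k+m\rangle}$ for every $m$. Since $L^{\langle k+m\rangle}=L^{k+m}$, which is $0$ for large $m$, we get $L^{\langle k\rangle}=0$. So each nonzero term drops the dimension by at least $1$, starting from $\dim L=n$. Hence $L^{\langle n+1\rangle}=0$, and by the first step $L^{n+1}=0$.

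The main obstacle is the rewriting step. I must check that the substitution $x(yz)\mapsto\delta^{-1}(xy)z-(xz)y$ terminates, because the second term $(uw)v$ again contains a possibly non-left-normed factor $v$. The induction needs a measure that strictly decreases, such as the degree of the right factor. If the bookkeeping turns out to need it, I would instead first prove the simpler inclusion $L\,L^{\langle j\rangle}\subseteq L^{\langle j+1\rangle}$ and bootstrap from there. The hypothesis $\delta\neq 1$ is not used in this argument, but $\delta\neq 0$ is essential: for $\delta=0$ the algebra is only right $2$-step nilpotent, and products $x(yz)$ cannot be rewritten in left-normed form.
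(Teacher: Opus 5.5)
Your argument is the first half of the paper's proof. The paper also rewrites every product in left-normed form (this needs only $\delta\neq0$), gets $L^k\subseteq L^{[k}$, and reads off $L^{[n+1}=0$ from the strictly decreasing chain. You additionally justify the strict descent, which the paper calls obvious. Your induction works once you run it on $j$ with $i$ arbitrary; note that $L^{i}L^{\langle j\rangle}$ should read $L^{\langle i\rangle}L^{\langle j\rangle}$. The term $(uw)v\in L^{\langle i+1\rangle}L^{\langle j-1\rangle}$ is covered by the hypothesis for $j-1$, so termination is not an issue.

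The real divergence is the convention, and it is not innocent. The paper takes the index to be the least $s$ with $L^s=0$: from $L^{[n+1}=0$ it concludes only ``index $\le n+1$''. It then invokes \cite[Theorem 2.1]{MO14} to exclude index $n+1$, i.e.\ to get $L^n=0$. That citation step is the only place where $\delta\neq1$ is meant to matter: for $\delta=1$, the null-filiform Leibniz algebra $e_ie_1=e_{i+1}$ has $L^n\neq0$. This is why your proof never uses $\delta\neq 1$. Measured against the paper's reading, you have proved only the weaker bound $L^{n+1}=0$.

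However, the stronger reading is false, and your ${\bf A}_3$ remark is exactly a counterexample to it. ${\bf A}_3$ is $\delta$-Leibniz for every $\delta$ and has $L^2\neq 0$, so its index in the paper's sense is $3=n+1$. Two further counterexamples:
\begin{itemize}
\item In dimension $3$, for any $\delta\neq0$, the algebra $e_1e_1=e_2$, $e_2e_1=e_3$, $e_1e_2=\frac{1-\delta}{\delta}e_3$ is $\delta$-Leibniz with $L^3\neq0$.
\item In every dimension, the commutative associative algebra $e_ie_j=e_{i+j}$ is $\frac12$-Leibniz by Theorem \ref{comL} and has $L^n\neq0$.
\end{itemize}
More precisely, suppose $L^n\neq0$. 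Your first step shows $L$ is one-generated with a basis satisfying $e_{i+1}=e_ie_1$. Evaluating the identity on $(e_i,e_1,e_1)$, $(e_1,e_2,e_1)$ and $(e_1,e_1,e_2)$ then gives $(1-\delta)^2(2\delta-1)e_4=0$. So the bound $L^n=0$ holds for $n\ge4$ and $\delta\neq\frac12$, and fails otherwise.

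Your statement, $L^{n+1}=0$ for all $\delta\neq0$, is therefore the correct form of the proposition, and your proof of it is complete. You should present it explicitly as a correction of the statement, since the paper's appeal to \cite{MO14} cannot hold as used, rather than as a harmless choice of convention.
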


\begin{proof}
 It is easy to see that each product of $k$ elements in a $\delta$-Leibniz algebra can be written as
 a sum of products with left-ordered brackets.
 Hence,  
 \begin{center}
     $ \sum\limits_{i+j=k} L^iL^j \ := \ 
     L^k \subseteq \underbrace{\big(\ldots ((LL)L)\ldots\big)L}_{k \  times}\ =:\ L^{[k}.$ 
\end{center}
Obviously, 
$ 0 = L^{[n+1} \subseteq L^{[n-1} \subseteq \ldots \subseteq L^{[2} \subseteq L^{[1}=L.$
Hence, the nilpotency index of $L$ is not greater than $n+1$. 
 Thanks to \cite[Theorem 2.1]{MO14}, there are no $n$-dimensional $\delta$-Leibniz algebras with the nilpotency index $n+1,$
 hence the nilpotency index of $L$ is not greater than $n.$
\end{proof}

\begin{definition}[see \cite{GK25}]\label{lengthdef}
Let ${\rm A}$ be a finite-dimensional algebra generated by a set $S.$
We denote $L(S) = \bigcup\limits_{i=1}^\infty \big\langle S^i \big\rangle.$ Since the set $S$ is generating for ${\rm A}$, we have ${\rm A} = L(S).$ 
The length of a generating set $S$ of  ${\rm A}$ is defined as follows: 
$l(S) = {\rm min} \big\{k \in \mathbb N : \bigcup\limits_{i=1}^k \big\langle S^i \big\rangle = {\rm A}\big\}.$ 
The length of an algebra ${\rm A}$ is 
$l({\rm A}) = {\rm max}\big\{l(S) : L(S) = {\rm A}\big\}.$
\end{definition}

\begin{corollary}\label{lengthL}
    The length of an $n$-dimensional  $\delta$-Leibniz algebra $\big(\delta \neq 0,1\big)$
  is not greater than $n-1.$ 
\end{corollary}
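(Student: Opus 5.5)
Since this is stated as a corollary of Proposition \ref{nilpL}, the plan is to reduce to it via a known result relating length to nilpotency, with a separate argument for non-nilpotent algebras. The known result is that for a nilpotent algebra of nilpotency index $s$ (so $L^{s}=0$ and $L^{s-1}\neq 0$), any generating set $S$ satisfies $\langle S^{i}\rangle\subseteq L^{i}$, hence $\bigcup_{i\le s-1}\langle S^i\rangle={\rm A}$ and $l(S)\le s-1$. I would first verify this containment. Words in $S$ of length $\ge s$ vanish, so $L(S)$ is spanned by words of length at most $s-1$, and therefore $l(S)\le s-1$. Then Proposition \ref{nilpL} gives $s\le n$, so $l(L)\le n-1$ in the nilpotent case.

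For the non-nilpotent case I would use the general bound from \cite{GK25} (or prove it directly). If $S$ generates ${\rm A}$, put $V_k=\bigcup_{i\le k}\langle S^i\rangle$, a chain of subspaces. If $V_k=V_{k+1}$, then $V_k$ is closed under multiplication by $S$ on either side, because every word of length $k+2$ is a product of two shorter words. I would argue that $V_k$ is then a subalgebra containing $S$, so it equals ${\rm A}$. With $\dim V_1\ge 1$ and strict growth, this gives $l(S)\le n$.

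To improve this to $n-1$ when $\delta\ne 0,1$, I would use the $\delta$-Leibniz identity, as in the proof of Proposition \ref{nilpL}. That identity lets every word be rewritten through left-normed products. Hence $V_{k+1}=V_k+V_kS$, so the chain stabilizes at the first equality. The remaining task is to rule out the extremal case in which the dimension grows by exactly one at each step from $\dim V_1=1$. In that case ${\rm A}$ would be generated by a single element $a$, since $\dim\langle S\rangle=1$. Then ${\rm A}$ is spanned by the left-normed powers $a, a^2, (a^2)a,\dots$, one new dimension per step.

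This extremal single-generator case is the main obstacle. For nilpotent ${\rm A}$ it is exactly the case excluded by \cite[Theorem 2.1]{MO14}, as used in Proposition \ref{nilpL}. For non-nilpotent ${\rm A}$ I would try to show that a one-generated $\delta$-Leibniz algebra with $\delta\ne 0,1$ cannot have $n$ linearly independent left-normed powers. The first attempt is to apply the identity with $x=y=z$, giving $(aa)a=\delta\big((aa)a+a(aa)\big)$, and then with $x=a^{k}$, $y=z=a$. This should produce linear relations among the powers, for example $(1-\delta)(a^2)a=\delta\, a a^2$, and these relations should force the last power to lie in the span of the earlier ones. If that fails, I would fall back on the classification of Theorem \ref{cldn} to check small dimensions, and on the solvability of the algebra modulo its annihilator to handle the general case.
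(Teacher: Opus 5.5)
There is a genuine gap, and it is exactly the extremal one-generated case you flag. That case cannot be ruled out, because it actually occurs: the statement as printed is false.

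\begin{itemize}
\item The algebra ${\bf A}_3$ of Theorem~\ref{cldn} ($e_1e_1=e_2$) is $\delta$-Leibniz for every $\delta$, and $l(\{e_1\})=2=n$.
\item In dimension $3$, take $e_1e_1=e_2$, $e_1e_2=(1-\delta)e_3$, $e_2e_1=\delta e_3$. This is $\delta$-Leibniz for every $\delta$: the only basis triple giving nonzero terms is $x=y=z=e_1$, and there the identity holds. Here $l(\{e_1\})=3$.
\item For $\delta=\frac12$, every commutative associative algebra is $\frac12$-Leibniz (Theorem~\ref{comL}). So the nilpotent algebra $e_ie_j=e_{i+j}$ (listed by the paper itself after Theorem~\ref{symmetric}) has length $n$ in every dimension. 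So does the non-nilpotent algebra $\mathbb{C}[t]/(t^n-1)$ with generating set $\{t\}$.
\end{itemize}

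Consequently Proposition~\ref{nilpL}, on which your nilpotent case rests, fails in dimensions $2$ and $3$ for every $\delta$, and in every dimension for $\delta=\frac12$. Your expectation that the relations push the last left-normed power into the span of the earlier ones also fails for $\delta=\frac12$. The paper's own proof passes to a nilpotent $L_0$ of the same length and then quotes Proposition~\ref{nilpL}, so it fails for the same reason.

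Your skeleton does prove the corrected statement, $\delta\notin\{0,\frac12,1\}$ and $n\ge 4$, uniformly and without Proposition~\ref{nilpL} or any degeneration step. It needs two repairs.

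First, in a nonassociative algebra a subspace closed under left and right multiplication by $S$ need not be a subalgebra. For $e_1e_1=e_2$, $e_2e_2=e_3$ one has $V_2=V_3\neq V_4$, so $l(\{e_1\})=4>3$; your ``general bound $l(S)\le n$'' is therefore false. You need the rewriting $x(yz)=\delta^{-1}(xy)z-(xz)y$, which gives $V_{k+1}=V_k+V_kS$. Hence $l(S)\le n-\dim\langle S\rangle+1$, and $l(S)=n$ forces $L$ to be generated by a single element $a$.

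Second, your proposed relations constrain nothing. The instances $x=u$, $y=z=a$ (with $u$ a left-normed power of $a$) only say $u(aa)=(\delta^{-1}-1)(ua)a$. The degree-$3$ relation $(1-\delta)a^2a=\delta\,aa^2$ becomes a tautology once $aa^2$ is rewritten.

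The real constraint appears in degree $4$. Put $c=\delta^{-1}-1$. Rewriting gives $a(a^2a)=a(aa^2)=c^2\,((aa)a)a$. Multiplying $(1-\delta)a^2a=\delta\,aa^2$ on the left by $a$ then gives $(1-2\delta)c^2\,((aa)a)a=0$. So for $\delta\notin\{0,\frac12,1\}$ all left-normed powers of degree at least $4$ vanish. A one-generated algebra then has dimension at most $3$, and therefore $l(L)\le n-1$ whenever $n\ge 4$.
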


\begin{proof}
    Let $L$ be an $n$-dimensional $\delta$-Leibniz algebra $\big(\delta \neq 0,1\big)$, such that $l({L})=k.$
    Obviously\footnote{The idea is completely similar to the proof of \cite[Lemma 38]{KKL23}.}, there exists an $n$-dimensional nilpotent $\delta$-Leibniz algebra $L_0$ $\big(\delta \neq 0,1\big)$, such that $l({L_0})=k.$
    Thanks to Proposition  \ref{nilpL}, $k\le n-1$, and the statement is proved. 
\end{proof}

\begin{proposition}\label{annid}
Let $L$ be a $\delta$-Leibniz algebra $\big(\delta\neq0\big);$  $I$ and $J$ be  ideals of $L$, then
\begin{enumerate}
    \item[\rm{(A)}] 
${\rm Ann}_r(L)\ =\ \big \{x\in L \ | \ Lx=0\big\}$ is     a two-sided   ideal of $L;$

\item[\rm{(B)}] 
$I  J+JI \ = \ \big\{ x_1y_1+y_2x_2 \ |\  x_1,x_2\in I, y_1,y_2\in J\big\}$  is   a two-sided   ideal of $L$.

\end{enumerate}

\end{proposition}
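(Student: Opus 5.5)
The plan is to verify both statements directly from the defining identity \eqref{antiLeibIdentity}, which for $\delta\neq 0$ can be solved for a left multiplication:
\begin{equation*}
x(yz)\ =\ \delta^{-1}(xy)z-(xz)y .
\end{equation*}
This rewriting is where the hypothesis $\delta\neq0$ enters, and it lets us move any element multiplied on the left of a product into positions where the ideal property of $I$, $J$ (or the defining property of ${\rm Ann}_r(L)$) can be applied. No earlier results beyond the identity itself are needed.

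For (A), take $x\in{\rm Ann}_r(L)$, so $yx=0$ for all $y\in L$, and let $a\in L$. We must show that $ax$ and $xa$ again lie in ${\rm Ann}_r(L)$, i.e.\ $y(ax)=0$ and $y(xa)=0$ for all $y$.
\begin{itemize}
\item Apply \eqref{antiLeibIdentity} to the triple $(y,a,x)$: $(ya)x=\delta\big((yx)a+y(ax)\big)$. Here $(ya)x=0$ and $yx=0$ because $x$ annihilates $L$ from the right, so $\delta\, y(ax)=0$, hence $y(ax)=0$.
\item Apply \eqref{antiLeibIdentity} to the triple $(y,x,a)$: $(yx)a=\delta\big((ya)x+y(xa)\big)$. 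The left side vanishes and $(ya)x=0$, so again $y(xa)=0$.
\end{itemize}
Since ${\rm Ann}_r(L)$ is clearly a subspace, this shows it is a two-sided ideal.

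For (B), interpret $IJ+JI$ as the linear span of the indicated products, so it suffices to check products of $a\in L$ with generators $xy$ and $yx$, where $x\in I$ and $y\in J$.
\begin{itemize}
\item \emph{Right multiplication.} We have $(xy)a=\delta\big((xa)y+x(ya)\big)$. Since $xa\in I$ and $ya\in J$, both terms lie in $IJ$. The same computation with the roles of $x$ and $y$ exchanged places $(yx)a$ in $JI$.
\item \emph{Left multiplication.} Using the rewritten identity, $a(xy)=\delta^{-1}(ax)y-(ay)x$. Here $(ax)y\in IJ$, and since $ay\in J$ and $x\in I$, we get $(ay)x\in JI$. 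Symmetrically, $a(yx)=\delta^{-1}(ay)x-(ax)y\in JI+IJ$.
\end{itemize}
Hence $IJ+JI$ is closed under left and right multiplication by $L$.

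There is no real obstacle. The only point needing care is the left multiplication in (B): the defining identity does not preserve the order of factors, so $IJ$ alone need not be an ideal. This is exactly why the statement uses the symmetrized sum $IJ+JI$.
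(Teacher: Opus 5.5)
Your proof is correct and matches the paper's argument. Part (A) is the same consequence of the identity; the paper writes it as $y(ax)=\delta^{-1}(ya)x-(yx)a$, with $a$ the annihilator element. Part (B) uses exactly the expansions $(xy)a=\delta\big((xa)y+x(ya)\big)$ and $a(xy)=\delta^{-1}(ax)y-(ay)x$. Your first bullet in (A) is superfluous, since $ax=0$ already because $x$ annihilates $L$ from the right; also, reading $IJ+JI$ as a linear span is the right interpretation, since the set of sums as literally written need not be a subspace.
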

\begin{proof} 
Firstly, for any $a\in {\rm Ann}_r(L)$ and $x\in L$ we have 
\begin{center}$ y(ax)\ = \ \delta^{-1}(ya)x- (yx)a\ =\ 0$ and $xa=0.$  \end{center} 
Hence, $xa, \ ax\in {\rm Ann}_r(L)$. Thus, ${\rm Ann}_r(L)$ is an ideal of $L.$

Secondly, for any $i_1,i_2\in I, j_1,j_2 \in J$ and $x \in L$ we have 
\begin{longtable}{lccc}
$(i_1j_1+j_2i_2)x$&$ =$&$ (i_1x)j_1+i_1(j_1x)+(j_2x)i_2+j_2(i_2x)$ &$\in I J+JI;$\\  
$x(i_1j_1+j_2i_2)$&$ =$&$ \delta^{-1} (xi_1)j_1-(xj_1)i_1+\delta^{-1} (xj_2)i_2-(xi_2)j_2$&$\in I J+JI.$\\  
\end{longtable}
\noindent Hence,   $I J+JI$ is an ideal of $L.$
\end{proof}  

\begin{remark}
The restriction on  $\delta\neq0$ in Proposition \ref{annid} is important. 
Let us consider a $2$-dimensional algebra ${\mathfrak L}$ given by the multiplication table:
$e_1e_2=e_2.$
This is a $0$-Leibniz algebra such that 
${\mathfrak L}\big({\rm Ann}_r({\mathfrak L}) {\mathfrak L}\big) \neq 0.$
\end{remark}

\begin{proposition}\label{deltaass}
    Let ${\rm A}$ be a $\delta$-associative algebra $\big( \delta \neq 1\big),$ then  ${\rm A}$ is $3$-step nilpotent.
\end{proposition}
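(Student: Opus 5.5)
The plan is to compare the two ways of reassociating a product of four elements and so obtain a scalar multiple of the right-normed word $x(y(zt))$ that must vanish. Here $\delta$-associativity means $(x,y,z)^{\ast}_{\delta}=0$, i.e. $(xy)z=\delta\, x(yz)$. By $3$-step nilpotency I understand ${\rm A}^4=0$, in the same sense as the remark that antiassociative algebras are $3$-step nilpotent.

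\textbf{First route.} Expanding $((xy)z)t$ by applying the identity twice to the outer brackets gives
\begin{center}
$((xy)z)t \ =\ \delta\,(xy)(zt)\ =\ \delta^2\, x(y(zt)).$
\end{center}

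\textbf{Second route.} First reassociate the inner bracket: $((xy)z)t=\delta\,(x(yz))t$. Then apply the identity to the triple $(x,\,yz,\,t)$, which gives $(x(yz))t=\delta\, x((yz)t)$. Finally apply it once more inside, $(yz)t=\delta\, y(zt)$. Altogether
\begin{center}
$((xy)z)t\ =\ \delta^3\, x(y(zt)).$
\end{center}

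\textbf{Comparison.} Subtracting the two expressions yields $\delta^2(1-\delta)\,x(y(zt))=0$. Since $\delta\neq 1$, for $\delta\neq0$ we get $x(y(zt))=0$. Every bracketing of four elements is, by repeated use of $(ab)c=\delta\, a(bc)$, a scalar multiple of the right-normed word $x(y(zt))$. Hence all products of four elements vanish, and all longer products contain such a subproduct (or reduce to one in the same way), so ${\rm A}^4=0$.

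\textbf{Main obstacle: the case $\delta=0$.} Here the identity only says $(xy)z=0$. The computation above then gives no information on right-normed words, and in fact $x(y(zt))$ need not vanish: in the free algebra subject to $(xy)z=0$, right-normed words are never killed. So for $\delta=0$ I would first check which reading of the statement is intended. Either $\delta=0$ is tacitly excluded, in line with the paper's standing conventions, or ``$3$-step nilpotent'' is meant in the one-sided sense that products with a left bracket of length at least $2$ vanish. I would then state the result for $\delta\notin\{0,1\}$, with the $\delta=0$ case handled by this remark.
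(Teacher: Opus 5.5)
Your proof is correct and is essentially the paper's argument. The paper runs once around the same pentagon of reassociations, $((xy)z)t=\delta(x(yz))t=\delta^2x((yz)t)=\delta^3x(y(zt))=\delta^2(xy)(zt)=\delta((xy)z)t$, while you traverse its two sides separately and compare them. Your caveat about $\delta=0$ is also justified: the paper silently needs $\delta\neq0$ to pass from $((xy)z)t=0$ to the other bracketings, and its own example $e_1e_2=e_2$ satisfies $(xy)z=0$ yet has $e_1(e_1(e_1e_2))=e_2\neq0$, so the statement genuinely fails for $\delta=0$.
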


\begin{proof}
 It is easy to see that 
\begin{center}
     $ \big((xy)z\big)t \ = \ 
 \delta \big(x(yz)\big)t \ = \ 
 \delta^2 x \big((yz)t\big) \ = \ 
 \delta^3 x \big(y(zt)\big) \ = \ 
 \delta^2 (xy)(zt) \ = \ 
 \delta \big((xy)z\big)t,$
\end{center}
which gives that $\big((xy)z\big)t=0$ and ${\rm A}$ is $3$-step nilpotent.\end{proof}

\begin{theorem}\label{comL} 
The variety of  commutative $\delta$-Leibniz algebras $\big(\delta\neq-1,1 \big)$
coincides with the variety of 
commutative $\frac{\delta}{1-\delta}$-associative algebras.
In particular, the variety of commutative $\frac 12$-Leibniz algebras coincides with the variety of commutative associative   algebras;
the variety of commutative $\delta$-Leibniz algebras  
$\big(\delta \neq \frac 12 \big)$ 
is a subvariety in the variety of commutative $3$-step nilpotent algebras.
\end{theorem}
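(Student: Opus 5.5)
The plan is to reduce both varieties, in the commutative setting, to a small linear system among the three "left-normed" monomials of degree $3$. Fix $x,y,z$ in a commutative algebra and put
$$u=(xy)z,\qquad v=(xz)y,\qquad w=(yz)x .$$
By commutativity every product of $x,y,z$ in any bracketing and order equals one of $u,v,w$. For example, $x(yz)=(yz)x=w$ and $y(zx)=(zx)y=v$.

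\textbf{The $\delta$-Leibniz side.} Write the identity \eqref{antiLeibIdentity} for the arguments $(x,y,z)$, $(x,z,y)$ and $(y,z,x)$, and rewrite each term via commutativity. This gives
$$u=\delta(v+w),\qquad v=\delta(u+w),\qquad w=\delta(u+v).$$
Conversely, these three relations for all $x,y,z$ are exactly the identity. Subtracting the first two gives $(1+\delta)(u-v)=0$, and similarly for the other pairs. Since $\delta\neq -1$, we get $u=v=w$. The relations then collapse to the single condition $(1-2\delta)u=0$. So a commutative algebra is $\delta$-Leibniz if and only if $u=v=w$ and $(1-2\delta)u=0$.

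\textbf{The $\gamma$-associative side}, with $\gamma=\frac{\delta}{1-\delta}$, which is defined since $\delta\neq 1. The identity $(xy)z=\gamma\,x(yz)$, applied to the same three orderings and rewritten with commutativity, yields
$$u=\gamma w,\qquad v=\gamma w,\qquad w=\gamma v.$$
Again, these relations for all $x,y,z$ are equivalent to the identity. They force $u=v$ and $w=\gamma^2 w$.

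\textbf{Comparing the two conditions.} Split into two cases.
\begin{itemize}
\item If $\delta=\frac12$, then $\gamma=1$. Both conditions reduce to $u=v=w$, i.e.\ commutative associativity. This gives the first particular claim.
\item If $\delta\neq\frac12$, the $\delta$-Leibniz condition is $u=v=w=0$. On the associative side $\gamma\neq 1$, and $\gamma=-1$ is impossible, since it would mean $\delta=\delta-1$. Hence $\gamma^2\neq 1$, so $w=0$, and then $u=v=\gamma w=0$.
\end{itemize}
In both cases the two varieties coincide. When $\delta\neq\frac12$, all degree-$3$ products vanish, which is the claimed embedding into commutative $3$-step nilpotent algebras.

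The only delicate step is checking that the permuted instances genuinely capture the full identity in each variety, so that the equivalence is exact and not merely an implication. This holds because $x,y,z$ are arbitrary and commutativity reduces every degree-$3$ monomial to one of $u,v,w$. The remaining work is the bookkeeping of the exceptional values $\delta=\pm1$ and $\gamma=\pm1$.
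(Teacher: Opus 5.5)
Your proof is correct, but it takes a different route from the paper.

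The paper works directly with identities. It writes the commutative $\delta$-Leibniz identity for $(x,y,z)$ and for $(x,z,y)$, adds $\delta$ times the first to the second, and obtains $(\delta^2-1)(xz)y+(\delta^2+\delta)x(zy)=0$. Dividing by $1+\delta$ gives the $\frac{\delta}{1-\delta}$-associative identity. For the converse it splits $\frac{\delta}{1-\delta}=\frac{\delta^2}{1-\delta}+\delta$ and uses $(xz)y=\frac{\delta}{1-\delta}\,x(yz)$. The ``in particular'' clauses are left implicit. The last one follows from Proposition \ref{deltaass}, since $\frac{\delta}{1-\delta}\neq 1$ exactly when $\delta\neq\frac12$.

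You instead solve both defining systems explicitly in the three monomials $u,v,w$ and compare the solution sets. Each approach has its own advantage:
\begin{itemize}
\item The paper's argument is uniform in $\delta$, needs no case split, and shows where the parameter $\frac{\delta}{1-\delta}$ comes from.
\item Your argument needs the split $\delta=\frac12$ versus $\delta\neq\frac12$. In return it proves the particular clauses on its own and gives a sharper result. For $\delta\notin\{\pm1,\frac12\}$, both varieties collapse to commutative algebras in which every product of three elements vanishes, i.e.\ $2$-step nilpotent in the paper's terminology. This is stronger than the stated $3$-step nilpotency.
\end{itemize}
Your stronger statement implies the stated one, because every product of four elements has the form $(ab)c$ or $a(bc)$ for suitable $a,b,c$. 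In effect, your computation shows that apart from $\delta=\frac12$, the theorem identifies two descriptions of the same trivial variety.
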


\begin{proof}
It is easy to see that each commutative $\delta$-Leibniz algebra satisfies
\begin{longtable}{lcl}
$\delta(xz)y+\delta x(yz)-(xy)z$&$=$&$0,$\\ 
$\delta(xy)z+\delta x(zy)-(xz)y$&$=$&$0.$
\end{longtable}
Summarizing,  the equation obtained by multiplying  the first equation by $\delta$ and the second equation, we have the following identity:
\begin{center}
    $(\delta^2-1)(xz)y+(\delta^2+\delta)x(zy)\ =\ 0,$
    \end{center}
that gives 
$(xz)y- \frac{\delta}{1-\delta} x(zy)=0,$
i.e., 
our algebra satisfies the $\frac{\delta}{1-\delta}$-associative identity.

On the other hand, each commutative  $\frac{\delta}{1-\delta}$-associative algebra satisfies 

\begin{center}
$(xy)z \ = \ 
\frac{\delta}{1-\delta} x(yz) \ = \ 
\frac{\delta^2}{1-\delta} x(yz)+ \delta  x(yz) \ = \ 
\delta   \big( (xz)y + x(yz) \big),$
\end{center}
i.e., it satisfies the $\delta$-Leibniz identity. \end{proof}

\begin{theorem}
    Let $L$ be a $\delta$-Leibniz algebra $\big(\delta\neq-1 \big).$ 
    Then $L$ is a $\frac{\delta}{1+\delta}$-right-symmetric algebra\footnote{i.e., it satisfies 
    $\big(x,y,z\big)_{\frac{\delta}{1+\delta}}\ =\ \big(x,z,y\big)_{\frac{\delta}{1+\delta}},$ \ see \cite{dN}.}. 
\end{theorem}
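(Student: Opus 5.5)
The plan is a direct manipulation of the defining identity \eqref{antiLeibIdentity}, in the same style as the proofs of Lemma \ref{3} and Theorem \ref{comL}. Write $\mu=\frac{\delta}{1+\delta}$; this is well defined because $\delta\neq-1$. The claim to prove is
\begin{center}
$(xy)z-\mu\, x(yz)\ =\ (xz)y-\mu\, x(zy).$
\end{center}

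First, write the $\delta$-Leibniz identity twice: once as stated, and once with $y\leftrightarrow z$ interchanged:
\begin{longtable}{lcl}
$(xy)z$&$=$&$\delta(xz)y+\delta x(yz),$\\
$(xz)y$&$=$&$\delta(xy)z+\delta x(zy).$
\end{longtable}
\noindent Next, subtract the second identity from the first. This gives
\begin{center}
$(xy)z-(xz)y\ =\ -\delta\big((xy)z-(xz)y\big)+\delta\big(x(yz)-x(zy)\big),$
\end{center}
and collecting terms yields
\begin{center}
$(1+\delta)\big((xy)z-(xz)y\big)\ =\ \delta\big(x(yz)-x(zy)\big).$
\end{center}

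Finally, divide by $1+\delta\neq0$. This gives $(xy)z-(xz)y=\mu\big(x(yz)-x(zy)\big)$, which is exactly the $\mu$-right-symmetric identity after rearranging terms.

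There is no real obstacle here. The only point to watch is that the hypothesis $\delta\neq-1$ is used precisely in the division step. For $\delta=-1$ the argument degenerates to the constraint $x(yz)=x(zy)$, which is exactly the computation $x[y,z]_1=0$ from the proof of Theorem \ref{simplantil}.
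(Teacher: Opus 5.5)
Your proposal is correct and matches the paper's proof: write the $\delta$-Leibniz identity together with its $y\leftrightarrow z$ swap, subtract to get $(1+\delta)\big((xy)z-(xz)y\big)=\delta\big(x(yz)-x(zy)\big)$, and divide by $1+\delta\neq 0$. Your side remark is also right: for $\delta=-1$ the same subtraction yields $x(yz)=x(zy)$, i.e., the identity $x[y,z]_1=0$ used in the proof of Theorem~\ref{simplantil}.
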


\begin{proof}
It is easy to see that
\begin{longtable}{lcl}
$(xy)z$&$=$&$\delta \big((xz)y+x(yz)\big),$\\ 
$(xz)y$&$=$&$\delta \big((xy)z+x(zy)\big).$\\ 
\end{longtable}
Subtracting these two identities, we obtain the following identity:
\begin{center}
    $(1+\delta)(xy)z-\delta x(yz)\ =\ (1+\delta)(xz)y-\delta x(zy).$
    \end{center}
Dividing this equality by $1+\delta$, we obtain the following identity:
\begin{center}
    $(xy)z-\frac{\delta}{1+\delta}x(yz)\ =\ (xz)y-\frac{\delta}{1+\delta} x(zy).$
        \end{center}
Hence, each $\delta$-Leibniz algebra is a $\frac{\delta}{1+\delta}$-right-symmetric algebra. \end{proof}

\begin{proposition}\label{liadmd}
    Let $L$ be a $\delta$-Leibniz algebra, 
    then $L$ is Lie admissible if and only if it satisfies $\sum\limits_{\sigma \in \mathbb S_3}  (-1)^{\sigma} \big(x_{\sigma(1)} x_{\sigma(2)}\big) x_{\sigma(3)}\ =\ 0.$
\end{proposition}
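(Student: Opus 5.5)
The plan is to express the Jacobian of the commutator algebra $L^{(-)}$ (with product $[x,y]_1=xy-yx$) as a combination of two alternating sums, and then use the $\delta$-Leibniz identity to write one sum in terms of the other. Put
\[
R(x_1,x_2,x_3)=\sum_{\sigma\in\mathbb S_3}(-1)^{\sigma}\big(x_{\sigma(1)}x_{\sigma(2)}\big)x_{\sigma(3)},\qquad
M(x_1,x_2,x_3)=\sum_{\sigma\in\mathbb S_3}(-1)^{\sigma}x_{\sigma(1)}\big(x_{\sigma(2)}x_{\sigma(3)}\big).
\]
The first step is the standard identity, valid in any algebra,
\[
\big[[x_1,x_2]_1,x_3\big]_1+\big[[x_2,x_3]_1,x_1\big]_1+\big[[x_3,x_1]_1,x_2\big]_1
= R(x_1,x_2,x_3)-M(x_1,x_2,x_3),
\]
that is, the Jacobian is the alternating sum of associators. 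I will verify it by expanding the twelve terms of the left-hand side and regrouping them by permutation sign.

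The second step uses the defining identity \eqref{antiLeibIdentity}. For $\delta\neq0$ it gives
\[
x(yz)=\delta^{-1}(xy)z-(xz)y.
\]
Substitute this into each term of $M$. The first part contributes $\delta^{-1}R$. In the second part, $\sum_{\sigma}(-1)^{\sigma}(x_{\sigma(1)}x_{\sigma(3)})x_{\sigma(2)}$ is obtained by composing every permutation with the transposition $(2\,3)$, which flips the sign, so this part equals $-R$. Hence
\[
M=\delta^{-1}R+R,
\]
and therefore the Jacobian equals $R-M=-\delta^{-1}R$. It follows that $L^{(-)}$ satisfies the Jacobi identity if and only if $R\equiv0$, which is the claimed criterion.

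The main point of care is the sign bookkeeping in the reindexing step, together with the role of $\delta$. For $\delta=0$ the identity reads $(xy)z=0$, so $R\equiv0$ automatically, while the Jacobian reduces to $-M$. In that case the criterion must be read under the paper's standing assumption $\delta\neq0$, or handled separately; I would state this explicitly. The case $\delta=1$ needs no separate treatment, since the computation above covers it and gives Jacobian $=-R$.
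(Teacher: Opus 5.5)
Your proof is correct and is essentially the paper's argument. Both identify the Jacobian of $[\cdot,\cdot]_1$ with the alternating sum of associators, then eliminate right-normed products via $x(yz)=\delta^{-1}(xy)z-(xz)y$ and the $(2\,3)$ reindexing; your constant $-\delta^{-1}$ is the correct one, while the paper's coefficient $1+\delta^{-1}$ should read $1-\delta^{-1}$, a harmless sign slip. Your caveat about $\delta=0$ is genuinely needed, since the paper's proof also divides by $\delta$ and the statement fails there: the $0$-Leibniz algebra $e_1e_2=e_4$, $e_3e_4=e_5$ has $R\equiv0$ but Jacobian $-e_5$ on $(e_1,e_2,e_3)$.
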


\begin{proof}
    It is easy to see that $L$ satisfies

    \begin{center}
        $\big(x,y,z\big) \ =\  (1+\delta^{-1}) (xy)z+(xz)y.$
    \end{center}
Hence $\big(x,y,z\big) - \big(x,z,y\big) = \delta^{-1} \big((xy)z-(xz)y \big).$
It follows, 
\begin{center}
$\sum\limits_{\sigma \in \mathbb S_3} (-1)^{\sigma}\big (x_{\sigma(1)},x_{\sigma(2)}, x_{\sigma(3)} \big)\ =\  
\delta^{-1}\sum\limits_{\sigma \in \mathbb S_3}  (-1)^{\sigma} \big(x_{\sigma(1)} x_{\sigma(2)}\big) x_{\sigma(3)},$
\end{center}
 where the left side gives the Lie-admissible identity.
Hence, our statement is proved. \end{proof}

In 1972, Kantor introduced $\big($left$\big)$ conservative algebras as a generalization of Jordan algebras \cite{K72}
 (also, see surveys on recent studies of conservative algebras and superalgebras \cite{P20,k23}). Namely, a vector space ${\rm V}$ with a multiplication $\cdot$ is called a $\big($left$\big)$ {\it conservative algebra} if there is a new multiplication $*:{\rm V}\times {\rm V}\rightarrow {\rm V}$ such that
\begin{center}
$\big[L_b^\cdot, [L_a^\cdot,\cdot]\big]\ =\ -\big[L_{a*b}^\cdot,\cdot\big],$
\ where \ 
$\big[L_c^\cdot, {\rm F}\big] (x,y)\ =\ c\cdot{\rm F}(x, y)-{\rm F}(c\cdot x,  y)-{\rm F}(x, c\cdot y).$
\end{center}
In other words, the following identity holds for all $a,b,x,y\in {\rm V}$:
\begin{multline*}\label{tojdestvo_glavnoe}
b\big(a(xy)-(ax)y-x(ay)\big)-
a\big((bx)y\big)+\big(a(bx)\big)y+(bx)(ay)\\
-a\big(x(by)\big)+(ax)(by)+x\big(a(by)\big)
+(a *b)(xy)-\big((a* b)x\big)y-x\big((a *b)y\big) \ =\ 0.
\end{multline*}

\begin{theorem}\label{Lcons}
    Each $\big($left$\big)$ $\delta$-Leibniz algebra is a $\big($left$\big)$ 
    conservative algebra  with the same additional multiplication.
\end{theorem}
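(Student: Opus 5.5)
The plan is to use the defining identity in its operator form. A left $\delta$-Leibniz algebra satisfies $a(xy)=\delta\big((ax)y+x(ay)\big)$, i.e.\ every left multiplication $L_a$ is a $\delta$-derivation. I take ``the same additional multiplication'' to mean $a*b:=ab$. The goal is to show that both sides of Kantor's identity $\big[L_b,[L_a,\cdot]\big]=-\big[L_{ab},\cdot\big]$ reduce to left multiplications applied to the single element $u=xy$. I assume $\delta\neq 0$; the case $\delta=0$ (where $z(xy)=0$) needs a separate check and is discussed at the end.

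\emph{Step 1.} I compute $[L_a,\cdot]$. From the identity, $(ax)y+x(ay)=\delta^{-1}a(xy)$. Hence
\[
[L_a,\cdot](x,y)\;=\;a(xy)-(ax)y-x(ay)\;=\;c\,a(xy),\qquad c:=1-\delta^{-1}.
\]
In particular, $-[L_{ab},\cdot](x,y)=-c\,(ab)(xy)$.

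\emph{Step 2.} I compute the left-hand side:
\[
\big[L_b,[L_a,\cdot]\big](x,y)\;=\;c\Big(b\big(a(xy)\big)-a\big((bx)y\big)-a\big(x(by)\big)\Big).
\]
Using Step 1 again for $b$, the last two terms combine to $\delta^{-1}a\big(b(xy)\big)$. Writing $u=xy$, the left-hand side becomes $c\big(b(au)-\delta^{-1}a(bu)\big)$.

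\emph{Step 3.} This is the main obstacle: I need to rewrite the double left products $a(bu)$ and $b(au)$ as single products $(\cdot)u$. Applying the identity to $a(bu)$ and then to the resulting term $b(au)$ gives
\[
a(bu)=\delta(ab)u+\delta^2(ba)u+\delta^2a(bu),\qquad\text{so}\qquad (1-\delta^2)\,a(bu)=\delta(ab)u+\delta^2(ba)u.
\]
This holds without dividing by $1-\delta^2$. Substituting $b(au)=\delta(ba)u+\delta\, a(bu)$, the bracket in Step 2 becomes
\[
\delta(ba)u-\delta^{-1}(1-\delta^2)\,a(bu)\;=\;\delta(ba)u-(ab)u-\delta(ba)u\;=\;-(ab)u.
\]
Thus the left-hand side equals $-c\,(ab)(xy)$, which matches Step 1. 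No case split on $\delta=\pm1$ is needed, since only the displayed relation is used and never a division by $1-\delta^2$; when $\delta=1$ we have $c=0$ and both sides vanish trivially.

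\emph{Step 4 ($\delta=0$).} Here $a(xy)=0$ and $[L_a,\cdot](x,y)=-(ax)y-x(ay)$. Expanding $\big[L_b,[L_a,\cdot]\big](x,y)$, every term has the form $p(qr)$, since $a(bx)=0$, $(bx)(ay)=0$ and $(ax)(by)=0$. So the left-hand side is $0$. Kantor's identity then requires $\big((a*b)x\big)y=0$, which holds for $a*b:=0$ but not in general for $a*b=ab$. So for $\delta=0$ the algebra is still conservative, but with the zero additional multiplication. This matches the paper's standing convention that excludes degenerate values of $\delta$.
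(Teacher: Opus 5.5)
Your proof is correct and takes essentially the same route as the paper: both use $(ax)y+x(ay)=\delta^{-1}a(xy)$ to reduce the conservative identity to $(1-\delta^{-1})\big(b(au)-\delta^{-1}a(bu)+(ab)u\big)=0$ with $u=xy$, and this is just the defining identity $a(bu)=\delta\big((ab)u+b(au)\big)$ (a last step the paper leaves implicit), so your Step~3 detour through $(1-\delta^2)a(bu)$ can be shortened to one line. Your remark on $\delta=0$ is also accurate: the paper only asserts plain conservativity there, and with $a*b=ab$ the identity indeed fails (e.g.\ for the algebra $e_2e_1=e_2$), so one needs $a*b=0$.
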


\begin{proof}
If $\delta=0,$ then the algebra satisfies $a(xy)=0$ and obviously it is left conservative.
 Each left $\delta$-Leibniz algebra satisfies the identity $(ax)y=\delta^{-1} a(xy)-x(ay).$ Hence,

\begin{longtable}{rcl}
$b\big(a(xy)-(ax)y-x(ay)\big)$ &$=$ &$b\big(a(xy)\big)-\delta^{-1}b\big(a(xy)\big)
,$\\
$\big(a(bx)\big)y+(bx)(ay)-a\big((bx)y\big)$&$=$&$\delta^{-2} a\big(b(xy)\big)-\delta^{-1}a\big(x(by)\big)
-\delta^{-1}a\big(b(xy)\big)+a\big(x(by)\big), $\\
$x\big(a(by)\big)-a\big(x(by)\big)+(ax)(by)$&$=$&$
-a\big(x(by)\big)+\delta^{-1}a\big(x(by)\big)
, $\\
$(ab)(xy)-\big((a b)x\big)y-x\big((a b)y\big)$&$=$&$ (1-\delta^{-1})(ab)(xy),$
\end{longtable}
\noindent and summarizing the left and right parts, we have 
the conservative identity from the left side and $0$ from the right side.
Hence,  each $\big($left$\big)$ $\delta$-Leibniz algebra is a $\big($left$\big)$ conservative algebra with the same additional multiplication.\end{proof}

\subsection{$\delta$-Leibniz algebras via $\delta$-associative algebras and dialgebras}\label{ddd}

It is known that each associative  algebra under the usual commutator product gives a Lie algebra $\big($in particular, it gives a Leibniz algebra$\big),$ 
and each antiassociative algebra under the symmetric product gives a mock-Lie algebra 
$\big($in particular, it gives an anti-Leibniz algebra$\big).$ 
Our next statement gives a generalization of above mention result.

\begin{proposition}
    Let ${\rm A}$ be a $\delta$-associative algebra $\big(\delta \neq 0\big)$, then
    $\big({\rm A}, \  [\cdot,\cdot]_{\delta}\big)$ is
    a non-$2$-step nilpotent $\delta$-Lebniz algebra if and only if 
    $\delta^2=1.$
\end{proposition}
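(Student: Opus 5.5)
The plan is to reduce everything to degree-$3$ products in ${\rm A}$. Write $x\circ y:=[x,y]_{\delta}=xy-\delta yx$ and $J(x,y,z):=(x\circ y)\circ z-\delta\big((x\circ z)\circ y+x\circ(y\circ z)\big)$. Expanding, $J$ is a combination of the twelve monomials $(ab)c$ and $a(bc)$, where $\{a,b,c\}=\{x,y,z\}$. Using the $\delta$-associative identity $(ab)c=\delta\, a(bc)$, we rewrite every left-normed monomial as $\delta$ times a right-normed one. This leaves $J$ as a combination of the six words $a(bc)$ with coefficients polynomial in $\delta$. In the free $\delta$-associative algebra these six words are independent in degree $3$, since each relation links two distinct monomials and the relations are disjoint. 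So $J\equiv 0$ holds identically iff all six coefficients vanish. I expect the coefficients to factor through $(\delta^2-1)$, possibly together with a factor $\delta$.

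\textbf{Sufficiency ($\delta^2=1$).} For $\delta=1$ this is the classical fact that an associative algebra under the commutator is Lie, hence Leibniz. For $\delta=-1$ it is the fact recalled at the start of this subsection: an antiassociative algebra under the anticommutator $xy+yx$ is mock-Lie, hence anti-Leibniz. The coefficient computation above also checks both cases uniformly. For the clause ``non-$2$-step nilpotent'' we exhibit examples.
\begin{itemize}
\item For $\delta=1$, take $M_2(\mathbb C)$: its commutator algebra $\mathfrak{gl}_2$ has $[[x,y],z]\neq 0$.
\item For $\delta=-1$, take an antiassociative algebra, e.g.\ the $7$-dimensional algebra ${\mathfrak B}$ of Remark \ref{dimaac} or a small free antiassociative algebra. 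Its degree-$3$ part is nonzero and contains words like $x(yz)$ with distinct letters. Check on generators that some $(x\circ y)\circ z$, which equals a signed sum of the $x(yz)$-type words, is nonzero.
\end{itemize}

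\textbf{Necessity ($\delta^2\neq 1$, $\delta\neq 0$).} Here Proposition \ref{deltaass} says ${\rm A}$ is $3$-step nilpotent, so only degree-$3$ products matter. If $({\rm A},\circ)$ is $\delta$-Leibniz, then $J(x,y,z)=0$ for all $x,y,z$ gives a linear relation among the six values $a(bc)$ in ${\rm A}$. Its coefficients are nonzero multiples of $\delta^2-1$, with the $\delta^{\pm1}$ factors harmless. We then also use the permuted relations $J(\sigma(x,y,z))=0$ for $\sigma\in\mathbb S_3$, giving a $6\times 6$ linear system in the six words. The goal is to show that this system forces the specific combination
\begin{center}
$(x\circ y)\circ z=(1-\delta^2)\big(x(yz)\big)-\delta(1-\delta)\big(z(xy)\big)\cdot(\ldots)$,
\end{center}
or more precisely the expression that $(x\circ y)\circ z$ reduces to, to vanish. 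This yields that $\circ$ is $2$-step nilpotent, contradicting the hypothesis.

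\textbf{Main obstacle.} The hard step is this last linear-algebra argument. The relations from $J$ need not kill every word $a(bc)$ individually; they only need to kill the particular combination representing $(x\circ y)\circ z$. I would first try to write that combination explicitly as a linear combination of the $J(\sigma(\cdot))$, with coefficients rational in $\delta$ and denominators dividing powers of $\delta(\delta^2-1)$. This requires computing the determinant, or the rank, of the $6\times 6$ circulant-type system. If that system turns out to have full rank for $\delta^2\neq1$, the conclusion is immediate. If it does not, the fallback is to exhibit the needed combination directly.
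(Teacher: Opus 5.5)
Your plan is the same as the paper's: expand the $\delta$-Leibniz defect $J(x,y,z)$ of $[\cdot,\cdot]_\delta$ and push every left-normed word to a right-normed one with $(ab)c=\delta\,a(bc)$. The gap is that you stop at the one computation that constitutes the proof, so the necessity direction is not established. Your forecasts about that computation are also wrong in ways that make it look harder than it is. Four of the six coefficients are zero, not nonzero multiples of $\delta^2-1$: the words $x(yz)$, $x(zy)$, $y(xz)$, $y(zx)$ cancel in pairs, and
\[
J(x,y,z)=(\delta^3-\delta)\,z(xy)+(\delta^2-\delta^4)\,z(yx)=-\delta(1-\delta^2)\,z[x,y]_{\delta}.
\]
This is the identity the paper derives, and it settles both directions. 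For $\delta^2=1$ the defect vanishes identically, so no separate appeal to the Lie and mock-Lie facts is needed. For $\delta^2\neq1$, $\delta\neq0$, the relations $J(x,y,z)=0=J(y,x,z)$ read $z(xy)=\delta\,z(yx)$ and $z(yx)=\delta\,z(xy)$, a system with determinant $1-\delta^2\neq0$. Hence $z(xy)=0$ for all $x,y,z\in{\rm A}$, and also $(xy)z=\delta\,x(yz)=0$. So ${\rm A}$ itself, and a fortiori $({\rm A},[\cdot,\cdot]_\delta)$, is $2$-step nilpotent. Your \emph{full rank} branch is the true one: the $6\times 6$ system is block diagonal with three such $2\times 2$ blocks, the relations do kill every word $a(bc)$ individually, and Proposition \ref{deltaass} is not needed. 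Your displayed guess for $(x\circ y)\circ z$ is also off; it reduces to $\delta\,x(yz)-\delta^2y(xz)-\delta\,z(xy)+\delta^2z(yx)$.

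On the examples: the paper gives none. It only proves the dichotomy, and the \emph{if} direction can only mean \emph{can be} non-$2$-step nilpotent, since the zero algebra is $\delta$-associative for every $\delta$. Supplying examples is therefore worthwhile, and $M_2(\mathbb C)$ works for $\delta=1$. But ${\mathfrak B}$ does not work for $\delta=-1$. All its structure constants are anticommutative except $e_1e_1=e_7$, so $xy+yx=2x_1y_1e_7$ (with $x_1,y_1$ the $e_1$-coordinates). Since $e_7\in{\rm Ann}({\mathfrak B})$, the algebra $({\mathfrak B},\circ)$ is $2$-step nilpotent. Take instead the free antiassociative algebra on two generators $x,y$. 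Its degree-$3$ component has the words $a(bc)$ as a basis, and by the formula above with $\delta=-1$, $(x\circ x)\circ y=2\big(y(xx)-x(xy)\big)\neq 0$.
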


\begin{proof}
The relations given below follow from the definitions of $\delta$-commutator and $\delta$-associative algebras. 
\begin{longtable}{rcrclclcl}
$\big[[x,y]_{\delta},\  z\big]_{\delta}$ &$=$&
$[xy- \delta yx, \ z]_\delta$ &$=
$&$(xy)z-\delta (yx)z -\delta z (xy)+\delta^2 z(yx)$& $=$& \\
&&&$=$&$\delta x(yz)-\delta^2 y(xz) -\delta z (xy)+\delta^2 z(yx);$\\

$-\delta \big[[x,z]_{\delta},\  y\big]_{\delta}$ &$=$&
$-\delta [xz- \delta zx, \ y]_\delta$ &$=
$&$-\delta  \big((xz)y-\delta (zx)y -\delta y (xz)+\delta^2 y(zx) \big)$& $=$& \\
&&&$=$&$-\delta^2 x(zy)+\delta^3 z(xy) +\delta^2 y (xz)-\delta^3 y(zx);$\\

$-\delta \big[x,\ [y,z]_\delta\big]_\delta$ & $=$&
$-\delta [x,\  yz - \delta zy]_\delta$ & $=$&
$-\delta \big(x (yz) - \delta x (zy) - \delta (yz)x +\delta^2 (zy)x \big)$& $=$& \\
&&&$=$&$-\delta x (yz) + \delta^2 x (zy) + \delta^3 y(zx) -\delta^4 z(yx)$. \\
\end{longtable}

Summarizing, we have

\begin{center}
$\big[[x,y]_{\delta},\  z\big]_{\delta}
-\delta \big[[x,z]_{\delta},\  y\big]_{\delta}-\delta \big[x,\ [y,z]_\delta\big]_\delta\ = \ 
-\delta (1-\delta^2) z [x,y]_\delta.$
    \end{center}
It is easy to see that if an algebra satisfies $ z [x,y]_\delta=0,$  then 
\begin{center}
    $z(yx) \ = \ \delta z (xy) \ = \ \delta^2 z(yx).$
\end{center} 
If  $\delta^2\neq 1,$ it has to be $2$-step nilpotent,  
    which proves our statement.
\end{proof}

Dzhumadildaev proved that each Leibniz algebra under the symmetric product gives a metabelian commutative algebra \cite{dzh08}.
Our next statement gives a generalization of above mention result.
In particular, we prove that each anti-Leibniz algebra under the usual commutator multiplication gives a metabelian anticommutative algebra.

\begin{proposition}
    Let $L$ be a $\delta$-Leibniz algebra $\big(\delta \neq 0\big)$, then
    $\big(L, \  [\cdot,\cdot]_{-\delta}\big)$ is metabelian if and only if 
    $\delta^2=1$ or $\delta^2\neq 1$ and $L$ satisfies
    \begin{center}
        $\big([x,y]_{-\delta}z\big)t+ \delta \big([z,t]_{-\delta}x \big)y \ = \ 0.$
    \end{center}
\end{proposition}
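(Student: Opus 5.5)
The plan is to compute the metabelian expression $\big[[x,y]_{-\delta},[z,t]_{-\delta}\big]_{-\delta}$ directly, using the $\delta$-Leibniz identity \eqref{antiLeibIdentity} only in the form that removes a right-nested product. Since $\delta\neq 0$, \eqref{antiLeibIdentity} can be solved for the right-nested term:
\begin{center}
$a(bc)\ =\ \delta^{-1}(ab)c-(ac)b \quad$ for all $a,b,c\in L.$
\end{center}
Throughout, write $u=[x,y]_{-\delta}=xy+\delta yx$ and $v=[z,t]_{-\delta}=zt+\delta tz$. Metabelianity of $\big(L,[\cdot,\cdot]_{-\delta}\big)$ means exactly $uv+\delta vu=0$ for all $x,y,z,t$.

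\emph{Key step.} For an arbitrary element $u$, expand
\begin{center}
$u[z,t]_{-\delta}\ =\ u(zt)+\delta\, u(tz).$
\end{center}
Apply the displayed rewriting to each term:
\begin{center}
$u(zt)\ =\ \delta^{-1}(uz)t-(ut)z, \qquad \delta\, u(tz)\ =\ (ut)z-\delta(uz)t.$
\end{center}
The terms $(ut)z$ cancel, which gives the general identity
\begin{center}
$a[z,t]_{-\delta}\ =\ (\delta^{-1}-\delta)(az)t \quad$ for all $a,z,t\in L.$
\end{center}
Applying it with $a=u$, and again with $a=v$ and the pair $(x,y)$, yields
\begin{center}
$uv+\delta vu\ =\ \dfrac{1-\delta^2}{\delta}\Big(\big([x,y]_{-\delta}z\big)t+\delta\big([z,t]_{-\delta}x\big)y\Big).$
\end{center}

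\emph{Conclusion.} If $\delta^2=1$, the scalar factor vanishes, so $\big(L,[\cdot,\cdot]_{-\delta}\big)$ is metabelian with no further condition. If $\delta^2\neq 1$, the factor $\frac{1-\delta^2}{\delta}$ is a nonzero scalar. In that case metabelianity is equivalent to the stated identity $\big([x,y]_{-\delta}z\big)t+\delta\big([z,t]_{-\delta}x\big)y=0$, and this gives both directions of the claim.

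I do not expect a real obstacle. The only points needing care are the cancellation of the $(ut)z$ terms, the sign of $\delta$ inside $[\cdot,\cdot]_{-\delta}$, and the use of $\delta\neq 0$ to invert $\delta$.
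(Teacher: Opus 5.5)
Your proof is correct and takes the same route as the paper: rewrite every right-nested product via $a(bc)=\delta^{-1}(ab)c-(ac)b$, which makes the metabelian expression factor as $(\delta^{-1}-\delta)\big(([x,y]_{-\delta}z)t+\delta([z,t]_{-\delta}x)y\big)$. The paper expands all eight products and pairs the terms by hand. Your intermediate identity $a[z,t]_{-\delta}=(\delta^{-1}-\delta)(az)t$ captures exactly those pairwise cancellations, so your version is more compact.
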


\begin{proof}
The relations given below follow from the definitions of $\delta$-commutator and $\delta$-Leibniz algebras. 
\begin{longtable}{lcl}
$\big[[x,y]_{-\delta},\ [z,t]_{-\delta}\big]_{-\delta}$ & $=$&
$[xy+\delta yx,\  zt +\delta tz]_{-\delta} \ = $\\
&$=$&$ (xy)(zt)+\delta (xy)(tz)+\delta (yx)(zt)+\delta^2 (yx)(tz)+$\\
\multicolumn{3}{r}{$+ \delta (zt)(xy)+ \delta^2(tz)(xy)+\delta^2 (zt)(yx)+\delta^3 (tz)(yx)\ =$}\\

&$=$&$ \delta^{-1}((xy)z)t - ((xy)t)z +\delta ((xy)t)z-\delta ((xy)z)t+$\\
&&\multicolumn{1}{c}{$((yx)z)t-\delta((yx)t)z+\delta ((yx)t)z-\delta^2 ((yx)z)t+$}\\
&&\multicolumn{1}{c}{$((zt)x)y-\delta ((zt)y)x)+\delta((zt)y)x-\delta^2((zt)x)y+$}\\
\multicolumn{3}{r}{$\delta ((tz)x)y-\delta^2((tz)y)x+\delta^2((tz)0y)x-\delta^3 ((tz)x)y \ =$}\\

&$=$&$ \big(\delta^{-1}-\delta\big)\big( ((xy)z)t+\delta ((yx)z)t +\delta ( ((zt)x)y+\delta ((tz)x)y) \big).$\\
\end{longtable}
\noindent The analysis of equivalent relations gives our statement.\end{proof}



It is known that an associative (resp., antiassociative) dialgebra under new multiplication
$xy= x \dashv y - y \vdash x$ (resp., $xy= x \dashv y + y \vdash x$) gives a Leibniz (resp., anti-Leibniz) algebra \cite{antileib}.
The following statement generalizes the above-mentioned results.

\begin{definition}
    Let $L$  be a dialgebra and $\delta$ be a fixed complex number.
    Then $L$ is  a $\delta$-associative dialgebra if the following identities hold true:

    \begin{longtable}{lclclcl}

&&$x  \dashv ( y \vdash z )-  x \dashv ( y \dashv z ) $& $=$&$0,$\\
&&$(x  \vdash  y ) \vdash z - (x  \dashv  y ) \vdash z$& $=$&$0,$\\
$(x,y,z)^\dashv_\delta$ &$:=$ &$ ( x \dashv y ) \dashv z - \delta x \dashv (y \dashv z)$& $=$&$0,$\\
$(x,y,z)^\vdash_\delta$ &$:=$ &$ ( x \vdash y ) \vdash z - \delta x \vdash (y \vdash z)$& $=$&$0,$\\
$(x,y,z)^\times_\delta$ &$:=$ &$ ( x \vdash y ) \dashv z - \delta x \vdash (y \dashv z)$& $=$&$0.$\\
    \end{longtable}
\end{definition}

\begin{theorem}
    Let $L$ be a $\delta$-associative dialgebra $\big(\delta\neq0\big),$ 
    then $L$ with a new multiplication $\llbracket x,y \rrbracket_{\delta}= x \dashv y -\delta  y \vdash x$ is a $\delta$-Leibniz algebra if and only if 
    it satisfies 
    \begin{center}$\delta (x,y,z)_{\delta^{-1}}^\vdash = (x,z,y)_{\delta^{-1}}^\times.$
\end{center}
\end{theorem}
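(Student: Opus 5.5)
The plan is a direct computation. I will expand both sides of the $\delta$-Leibniz identity \eqref{antiLeibIdentity} for the product $\llbracket x,y\rrbracket_\delta = x\dashv y-\delta\, y\vdash x$, rewrite every resulting dialgebra monomial in a fixed normal form using the defining identities of a $\delta$-associative dialgebra, and then compare the remaining difference with $\delta (x,y,z)^\vdash_{\delta^{-1}} - (x,z,y)^\times_{\delta^{-1}}$ (after a suitable renaming of variables).

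Concretely, I will write
\[
\llbracket\llbracket x,y\rrbracket_\delta,z\rrbracket_\delta=(x\dashv y)\dashv z-\delta(y\vdash x)\dashv z-\delta z\vdash(x\dashv y)+\delta^2 z\vdash(y\vdash x).
\]
I will expand $\llbracket\llbracket x,z\rrbracket_\delta,y\rrbracket_\delta$ in the same way, obtaining its four terms from the display by the swap $y\leftrightarrow z$. For the remaining term I will use
\[
\llbracket x,\llbracket y,z\rrbracket_\delta\rrbracket_\delta = x\dashv(y\dashv z)-\delta x\dashv(z\vdash y)-\delta (y\dashv z)\vdash x+\delta^2(z\vdash y)\vdash x .
\]
I will then form $D=\llbracket\llbracket x,y\rrbracket_\delta,z\rrbracket_\delta-\delta\llbracket\llbracket x,z\rrbracket_\delta,y\rrbracket_\delta-\delta\llbracket x,\llbracket y,z\rrbracket_\delta\rrbracket_\delta$, which has twelve terms.

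The normalization step uses the axioms to eliminate the redundant shapes:
\begin{itemize}
\item $x\dashv(z\vdash y)=x\dashv(z\dashv y)$ and $(y\dashv z)\vdash x=(y\vdash z)\vdash x$;
\item the $\dashv$-associativity $(a\dashv b)\dashv c=\delta\, a\dashv(b\dashv c)$;
\item the $\vdash$-associativity $(a\vdash b)\vdash c=\delta\, a\vdash(b\vdash c)$.
\end{itemize}
After this I expect all the purely ``$x\dashv(\cdot\dashv\cdot)$'' terms to cancel among themselves. The cancellation should follow the same pattern as in the proof that $\delta$-associative algebras give $\delta$-Leibniz algebras under $[\cdot,\cdot]_\delta$ exactly when the coefficient $(1-\delta^2)$ vanishes. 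In the dialgebra setting, however, I will deliberately \emph{not} use the mixed identity $(x,y,z)^\times_\delta=0$ to collapse $(y\vdash x)\dashv z$ and $z\vdash(x\dashv y)$. Those mixed terms are exactly what should survive.

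The terms left over will be of the shapes $z\vdash(x\dashv y)$, $(z\vdash x)\dashv y$, $z\vdash(y\vdash x)$ and $(z\vdash y)\vdash x$. These are the ingredients of $(z,y,x)^\vdash_{\delta^{-1}}$ and $(z,x,y)^\times_{\delta^{-1}}$, and their $y\leftrightarrow z$ counterparts. Dividing by a power of $\delta$ (allowed since $\delta\ne0$), I aim to show that
\[
D=c\big(\delta(z,y,x)^\vdash_{\delta^{-1}}-(z,x,y)^\times_{\delta^{-1}}\big)-c'\big(\delta(y,z,x)^\vdash_{\delta^{-1}}-(y,x,z)^\times_{\delta^{-1}}\big)
\]
for nonzero scalars $c,c'$ that are explicit powers of $\delta$. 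The direction ``if'' is then immediate, since each bracket vanishes by hypothesis.

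For ``only if'' I will exploit the free choice of variables. The two brackets in the display above have the form $F(z,y,x)$ and $F(y,z,x)$, where $F(a,b,c)=\delta(a,b,c)^\vdash_{\delta^{-1}}-(a,c,b)^\times_{\delta^{-1}}$. I intend to show that the $\delta$-Leibniz identity forces $F(a,b,c)$ to vanish on its own. I will try the substitution $y=z$, or alternatively symmetrize and antisymmetrize in $y,z$, to isolate $F$. I expect this separation to be the main obstacle. If $F(z,y,x)$ and $F(y,z,x)$ appear with coefficients that do not allow them to be separated, I will instead rewrite $(a,b,c)^\vdash_{\delta^{-1}}$ using $(a,b,c)^\vdash_\delta=0$ as $(\delta-\delta^{-1})\,a\vdash(b\vdash c)$, which makes the bracket essentially symmetric in the needed way. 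The bookkeeping of the normal forms, so that no hidden cancellation is missed, is the other delicate point.
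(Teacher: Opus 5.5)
Your route (expand the three brackets, then regroup the twelve terms into dialgebra associators) is the paper's, and your expansions are correct. However, the plan as written does not close, because you decide never to use the mixed axiom. The four mixed terms are not a single block. They split into two associators with opposite roles:
\[
-\delta(y\vdash x)\dashv z+\delta^2\,y\vdash(x\dashv z)=-\delta\,(y,x,z)^\times_\delta ,
\]
\[
-\delta\,z\vdash(x\dashv y)+\delta^2(z\vdash x)\dashv y=\delta^2(z,x,y)^\times_{\delta^{-1}} .
\]
The two terms $(y\vdash x)\dashv z$ and $z\vdash(x\dashv y)$ that you single out belong to different ones. The first associator must be killed by $(y,x,z)^\times_\delta=0$. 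Using the two bar identities, the twelve terms group exactly as
\[
D=(x,y,z)^\dashv_\delta-\delta(x,z,y)^\dashv_\delta-\delta(y,x,z)^\times_\delta+\delta^2(y,z,x)^\vdash_\delta+\delta^2(z,x,y)^\times_{\delta^{-1}}-\delta^3(z,y,x)^\vdash_{\delta^{-1}} .
\]
The first four associators vanish by the axioms, so every term whose leftmost letter is $x$ or $y$ disappears, and $D=-\delta^2F(z,y,x)$. Hence $c=-\delta^2$ and $c'=0$. Since $\delta\neq0$, both directions follow at once after renaming $(z,y,x)\mapsto(x,y,z)$; there is nothing to separate.

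Your ansatz with $c'\neq0$ is in fact false for $\delta^2\neq1$; for $\delta^2=1$ everything is trivial. Suppose you reduce only with the bar identities and the $\dashv$- and $\vdash$-associativity. Then the coefficient of $y\vdash(z\vdash x)$ is $0$ in $D$ but $\delta^2-1$ in $F(y,z,x)$, which forces $c'=0$. The unreduced residue $-\delta(y,x,z)^\times_\delta$ is then not a multiple of $F(z,y,x)$, so $D=cF(z,y,x)$ fails as well.

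Equivalently, in degree $3$ of the free $\delta$-associative dialgebra the monomials $a\dashv(b\dashv c)$, $a\vdash(b\vdash c)$, $a\vdash(b\dashv c)$ form a basis, and
\[
F(a,b,c)=(\delta-\delta^{-1})\big(\delta\,a\vdash(b\vdash c)-a\vdash(c\dashv b)\big).
\]
So $F(z,y,x)$ and $F(y,z,x)$ live on disjoint sets of basis monomials. The ``main obstacle'' you anticipate is therefore an artifact of the wrong ansatz. Your fallback $y=z$ would not have resolved it anyway: it only gives $F(y,y,x)=0$, i.e.\ the vanishing of the part of $F$ symmetric in its first two arguments.
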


\begin{proof}
    It is easy to see that 
    \begin{longtable}{lcl}
    $ \big\llbracket  \llbracket x,y \rrbracket_{\delta}, z \big\rrbracket_{\delta} $&$-$&$\delta 
    \big( \big\llbracket  \llbracket x,z \rrbracket_{\delta}, y \big\rrbracket_{\delta} + \big\llbracket x, \llbracket y,z \rrbracket_{\delta} \big\rrbracket_{\delta} \big)$\\
    &$=$&$ (x \dashv y)  \dashv z - \delta z  \vdash ( x  \dashv y) - \delta (y  \vdash x)  \dashv z +\delta^2 z  \vdash ( y  \vdash x)  $\\
    && $-\delta (x  \dashv z )  \dashv  y +\delta^2 y  \vdash (x  \dashv z)+\delta^2 (z  \vdash x)  \dashv y - \delta^3 y  \vdash (z  \vdash x) $\\
    && $-\delta x  \dashv (y  \dashv z) +\delta^2 (y  \dashv z)  \vdash x +\delta^2 x  \dashv (z  \vdash y) - \delta^3 (z  \vdash y )  \vdash x$\\
  &$=$&$(x,y,z)^\dashv_{\delta} + \delta^2 (z,x,y)^\times_{\delta^{-1}} - \delta(y,x,z)^\times_{\delta}- \delta^3 (z,y,x)^\vdash_{\delta^{-1}}-\delta (x,z,y)^\dashv_{\delta}+\delta^2(y,z,x)^\vdash_\delta$\\
  &$=$&$ \delta^2 (z,x,y)^\times_{\delta^{-1}}  - \delta^3 (z,y,x)^\vdash_{\delta^{-1}} $\end{longtable}\end{proof}

 \subsection{Symmetric $\delta$-Leibniz algebras}\label{syml}

\begin{theorem}\label{symmetric}
     Let $L$ be a symmetric $\delta$-Leibniz algebra\footnote{i.e., left and right $\delta$-Leibniz algebra.}. 
     Then
     \begin{enumerate}
         \item[$({\rm A})$] if $\delta\neq \frac 12,$ then $x^n=0$ for all $n>2$. 
         \item[$({\rm B})$] if $\delta= \frac 12,$ then $L$ is power associative. 
     \end{enumerate}
 \end{theorem}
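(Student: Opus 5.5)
The plan is to specialize both defining identities of a symmetric $\delta$-Leibniz algebra to repeated copies of a single element $x$ and solve the resulting small linear systems. Recall that $L$ satisfies the right identity
\[(xy)z=\delta\big((xz)y+x(yz)\big)\]
and the left identity, in which every left multiplication is a $\delta$-derivation:
\[x(yz)=\delta\big((xy)z+y(xz)\big).\]
Putting $x=y=z$ and writing $a=x^2x$, $b=xx^2$, these give
\[(1-\delta)a=\delta b,\qquad (1-\delta)b=\delta a .\]
Subtracting yields $a-b=0$, so $x^2x=xx^2$ for every $\delta$. Adding then gives $(1-2\delta)a=0$.

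\textbf{Case (A), $\delta\neq\frac12$.} Here $x^2x=xx^2=0$, so every product of three copies of $x$ vanishes. For degree $4$, I will substitute $(x,x,x^2)$ into the right identity:
\[x^2x^2=\delta\big((xx^2)x+x(xx^2)\big)=0 .\]
The other degree-$4$ bracketings contain a degree-$3$ factor and so vanish automatically. I then argue by induction on $n\ge 3$ that every bracketing of $n$ copies of $x$ is $0$. Any such monomial is a product $uv$ of two monomials in $x$ of degrees $i+j=n$. For $n\ge5$, one of $i,j$ is at least $3$, so that factor vanishes by the inductive hypothesis. Hence $x^n=0$ for all $n>2$, in every bracketing.

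\textbf{Case (B), $\delta=\frac12$.} We already have $x^2x=xx^2$. I will invoke Albert's criterion: over a field of characteristic $0$, an algebra is power associative if and only if $x^2x=xx^2$ and $x^2x^2=(x^2x)x$. The remaining step is to derive the second identity. Substituting $(x^2,x,x)$ into the right identity with $\delta=\frac12$ gives
\[(x^2x)x=\tfrac12\big((x^2x)x+x^2x^2\big),\]
hence $(x^2x)x=x^2x^2$. As a consistency check, the substitution $(x,x,x^2)$ gives
\[x^2x^2=\tfrac12\big((xx^2)x+x(xx^2)\big),\]
which is compatible with this. If needed, the left identity with $(x,x,x^2)$ also controls $x(xx^2)$, so all degree-$4$ bracketings coincide.

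The main obstacle is case (B): it relies on citing Albert's theorem, which needs characteristic $0$ and is available over $\mathbb C$ as assumed in the paper. If a self-contained argument is preferred, one would instead prove by induction that $x^ix^j=x^{i+j}$, using the right identity with $(x^i,x,x^{j-1})$ together with the left identity. That inductive bookkeeping is where the real work would lie; case (A) is purely mechanical.
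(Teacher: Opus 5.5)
Your proof is correct and follows the paper's route. You specialize both identities to a single element to get $x^2x=xx^2$ and $(1-2\delta)x^2x=0$; in case (A) you kill degree $4$ via $(x,x,x^2)$ in the right identity, and in case (B) you verify Albert's two conditions. One minor difference: you get $(x^2x)x=x^2x^2$ directly from the right identity with $(x^2,x,x)$, whereas the paper first derives $x(xx^2)=x^2x^2$ from the left identity with $(x,x,x^2)$ and then uses $(x,x,x^2)$ in the right identity. Your substitution is a bit more direct, and your explicit induction in case (A) fills in a step the paper leaves implicit.
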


 \begin{proof}
It is easy to see that
$x^2x=\delta \big(x^2x+xx^2\big)=xx^2.$
From this, we have  $x^2x=2\delta x^2x.$ Then
     \begin{enumerate}
         \item[$({\rm A})$] if $\delta\neq \frac 12,$ then $x^3=x^2x=xx^2=0$  and $x^2x^2= (xx)x^2=\delta(x^3x+xx^3)=0,$ which gives $x^n=0$ for all $n>2.$ 
         \item[$({\rm B})$] if $\delta= \frac 12,$ then we have $x^3=x^2x=xx^2.$ 
         It is easy to see that 
         \begin{center}
             $x^2x^2  \ = \ \frac12 \big (x^3x+xx^3\big) \ = \ \frac 12 \big(x^3x+\frac 12 x^2x^2 +\frac 12 x x^3\big),$ \  i.e. $x^2x^2=x^3x.$
         \end{center}         
    Then, the   algebra $L$ satisfies the conditions  from \cite[Lemma 3]{Albert} and hence, $L$ is power-associative.  
     \end{enumerate}
\end{proof}

\begin{corollary} Let $L$ be a nonzero complex $n$-dimensional one-generated symmetric $\delta$-Leibniz algebra. Then 

\begin{enumerate}
    \item[$({\rm A})$] if $\delta \neq \frac 12,$ then $n=2$ and it has the multiplication table given by $e_1^2=e_2.$\

\item[$({\rm B})$] if $\delta=\frac12,$ then it has the multiplication table given by  $e_ie_j=e_{i+j}, \ \ 2\leq i+j\leq n.$ 
\end{enumerate}
\end{corollary}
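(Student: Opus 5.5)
Let $e_1$ be a generator of $L$. The plan is to show that, for a one-generated symmetric $\delta$-Leibniz algebra, every product of copies of $e_1$ (in any bracketing) is a scalar multiple of some power $e_1^k$. It then follows that $L$ is spanned by the powers $e_1^k$. The two cases are then read off from Theorem~\ref{symmetric}.

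\emph{Reduction to powers.} By the proof of Proposition~\ref{nilpL}, in a right $\delta$-Leibniz algebra every product of $k$ elements can be rewritten as a linear combination of left-normed products. With only one generator, the unique left-normed word of degree $k$ is the right power $e_1^{[k]}=(\cdots(e_1e_1)\cdots)e_1$. Hence $L$ is spanned by $\{e_1^{[k]}\}_{k\ge1}$. This requires $\delta\neq0$, since the rewriting uses $x(yz)=\delta^{-1}(xy)z-(xz)y$. The case $\delta=0$ is handled directly: there $(xy)z=0$ and $x(yz)=0$, so $L=\langle e_1,e_1^2\rangle$.

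\emph{Case (A), $\delta\neq\frac12$.} By Theorem~\ref{symmetric}(A), $e_1^{[3]}=e_1^3=0$. The left-normed words of higher degree are obtained from $e_1^{[3]}$ by right multiplication, so they vanish as well. Thus $L=\langle e_1,e_1^2\rangle$. If $e_1^2=0$, then $L$ is one-dimensional with zero multiplication, which is excluded by nontriviality. Otherwise $n=2$, and putting $e_2=e_1^2$ gives the table $e_1^2=e_2$, with all other products zero by $3$-nilpotency.

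\emph{Case (B), $\delta=\frac12$.} By Theorem~\ref{symmetric}(B), $L$ is power-associative, so $e_1^ie_1^j=e_1^{i+j}$ for all $i,j$. Let $m$ be minimal with $e_1^m\in\langle e_1,\dots,e_1^{m-1}\rangle$; this is the main obstacle. Write $e_1^m=\sum_{i<m}\alpha_ie_1^i$. If $\alpha_i\neq0$ for some $i$, then the subalgebra $\langle e_1^k\rangle$ is a commutative, power-associative, finite-dimensional quotient of $t\mathbb C[t]$. Such an algebra contains a nonzero idempotent, obtained from a factor $t-\lambda$ with $\lambda\ne0$. I would like to rule this out with the $\frac12$-Leibniz identity. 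However, the two-dimensional classification in Theorem~\ref{cldn} shows that idempotents are genuinely possible for $\delta=\frac12$ (for instance ${\bf D}_2(0,0)$ and ${\bf E}_1(0,0,0,0)$, the latter being one-generated by $e_1+e_2$). The plan therefore has to use commutativity together with Theorem~\ref{comL}, which makes the algebra commutative associative. In that setting, statement (B) is the nilpotent case $e_ie_j=e_{i+j}$, $2\le i+j\le n$. I would split off this case by assuming $L$ is nilpotent, so that $e_1^{n+1}=0$ and the powers $e_1,\dots,e_1^n$ form a basis by minimality. Power-associativity then gives exactly $e_ie_j=e_{i+j}$, where $e_k:=0$ for $k>n$. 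I also expect that a semisimple summand must be reconciled with the stated table, possibly by noting that the statement tacitly assumes nilpotency.
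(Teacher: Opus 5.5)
Your approach is the intended one: the paper gives no separate proof and reads the corollary off Theorem~\ref{symmetric}, as you do. In (A) you should add the check that $e_1$ and $e_1^2$ are linearly independent. If $e_1^2=\lambda e_1$ with $\lambda\neq 0$, then $(e_1e_1)e_1=\lambda^2e_1\neq 0$, which contradicts $x^2x=0$. The products $e_1e_2=e_1e_1^2$, $e_2e_1=e_1^2e_1$ and $e_2e_2=e_1^2e_1^2$ then vanish by Theorem~\ref{symmetric}(A).

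For (B), there is nothing to reconcile: your suspicion is correct, and (B) is false as stated. It needs the extra hypothesis that $L$ is nilpotent. Since $L$ is generated by $e_1$ and is power-associative, $L$ is itself associative and commutative, so the detour through Theorem~\ref{comL} is unnecessary. Hence $L\cong t\mathbb C[t]/(p)$ with $p=t^{n+1}-\sum_{i=1}^{n}\alpha_i t^i$. Conversely, every such quotient is commutative associative, hence a symmetric $\frac12$-Leibniz algebra. It has the table $e_ie_j=e_{i+j}$ ($2\le i+j\le n$, all other products zero) exactly when $p=t^{n+1}$, i.e.\ when $L$ is nilpotent. The simplest counterexamples are the one-dimensional algebra $e_1e_1=e_1$ and ${\bf D}_2(0,0)$; the latter is generated by $e_1+e_2$, since $(e_1+e_2)^2=e_1$. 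Your own example is misstated: in ${\bf E}_1(0,0,0,0)$ the element $e_1+e_2$ is the identity, hence idempotent, and generates only a one-dimensional subalgebra; take $e_1+2e_2$ instead. Once nilpotency is assumed, your argument for (B) is complete.
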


 \begin{proposition}
      Let $L$ be a symmetric $\delta$-Leibniz algebra, 
     then $\big(L, [\cdot, \cdot]_1\big)$ is a $\delta$-Lie algebra.
In particular, if $\delta\neq-\frac12,$ then $\big(L, [\cdot, \cdot]_1\big)$ is a $2$-step nilpotent Lie algebra.
 \end{proposition}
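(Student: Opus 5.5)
The plan is to show that for every $z\in L$ the map $D_z(a)=[a,z]_1=az-za$ is a $\delta$-derivation of the original product of $L$. The $\delta$-Lie identity for $[\cdot,\cdot]_1$ then follows formally, and the second assertion follows from Lemma \ref{5}.

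\emph{Step 1.} Rewrite the two halves of the symmetric $\delta$-Leibniz condition as $\delta$-derivation statements in the sense of Definition \ref{12der}. The right identity $(xy)z=\delta\big((xz)y+x(yz)\big)$ says exactly that the right multiplication $R_z$ is a $\delta$-derivation. The left identity $z(xy)=\delta\big((zx)y+x(zy)\big)$ says that the left multiplication $L_z$ is a $\delta$-derivation. For fixed $\delta$, $\delta$-derivations form a linear space, so $D_z=R_z-L_z$ is again a $\delta$-derivation of $L$.

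\emph{Step 2.} Show that any $\delta$-derivation $D$ of $(L,\cdot)$ is also a $\delta$-derivation of $(L,[\cdot,\cdot]_1)$:
\begin{center}
$D([x,y]_1)=D(xy)-D(yx)=\delta\big(D(x)y+xD(y)-D(y)x-yD(x)\big)=\delta\big([D(x),y]_1+[x,D(y)]_1\big).$
\end{center}
Applying this with $D=D_z$ gives
\begin{center}
$\big[[x,y]_1,z\big]_1=\delta\big(\big[[x,z]_1,y\big]_1+\big[x,[y,z]_1\big]_1\big).$
\end{center}
Since $[\cdot,\cdot]_1$ is clearly anticommutative, this is precisely Definition \ref{deltaLie}, so $(L,[\cdot,\cdot]_1)$ is a $\delta$-Lie algebra.

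\emph{Step 3.} For the second assertion, use the paper's standing convention $\delta\neq 1$. For $\delta\neq 1$ and $\delta\neq-\frac12$, Lemma \ref{5} applied to $(L,[\cdot,\cdot]_1)$ gives $[[x,y]_1,z]_1=0$, so it is $2$-step nilpotent. In a $2$-step nilpotent anticommutative algebra every term of the Jacobi identity vanishes, so it is also a Lie algebra.

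I do not expect a real obstacle. The only point needing care is Step 3: for $\delta=1$ the claim would be false, since every Lie algebra is a symmetric Leibniz algebra. So I will state explicitly that the convention $\delta\neq 1$ is in force there, and otherwise check only the sign bookkeeping in Step 2.
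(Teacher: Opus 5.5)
Your proof is correct and is essentially the paper's argument: the paper expands $[[x,y]_1,z]_1=(xy)z-(yx)z-z(xy)+z(yx)$, applies the right identity to the first two terms and the left identity to the last two, and regroups. That is the same computation as your observation that $R_z-L_z$ is a $\delta$-derivation, which you then push through the general fact that $\delta$-derivations of $(L,\cdot)$ are $\delta$-derivations of $(L,[\cdot,\cdot]_1)$. Your remark that the second assertion needs the standing convention $\delta\neq 1$ is right: Lemma \ref{5} requires it, and the claim fails for Lie algebras. The paper's proof leaves this implicit.
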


 \begin{proof}
 It is easy to see that
 \begin{longtable}
     {llllllllllll}
     $\big[[x,y]_1,z \big]_1$ &$=$&$  (xy-yx)z-x(xy-yx)\ =$ \\
 &$=$&$\delta\big((xz)y+x(yz)-(yz)x-y(xz)-(zx)y-x(zy)+(zy)x+y(zx)\big)$ &$=$\\
\multicolumn{4}{r}{$\delta \big( \big[[x,z]_1,y\big]_1 + \big[x,[y,z]_1\big]_1 \big).$}
      \end{longtable}
\noindent Hence, $\big(L, [\cdot, \cdot]_1\big)$ is a $\delta$-Lie algebra.
If $\delta \neq -\frac 12,$ then by Lemmas \ref{3} and \ref{5}, $\big(L, [\cdot, \cdot]_1\big)$ is a  $2$-step nilpotent Lie algebra.
\end{proof}

\section{$\delta$-Zinbiel algebras}\label{dZ}
 
Let us construct multilinear base elements of degree $3$ for a free $\delta$-Leibniz algebra $\big( \delta\neq 0\big)$. Below, we give a presentation of 6 non-base elements of degree 3 as a linear
combination of the base elements of degree 3:

\begin{longtable}{rclrcl}
$a (b  c) $&$ =$&$ \delta^{-1} (a b) c - (a  c) b,$&
$a (c  b) $&$ =$&$ \delta^{-1} (a c) b - (a  b) c,$\\ 

$b (a  c) $&$ =$&$ \delta^{-1} (b a) c - (b  c) a,$&
$b (c  a) $&$ =$&$ \delta^{-1} (b c) a - (b  a) c,$\\ 

$c (b  a) $&$ =$&$ \delta^{-1} (c b) a - (c  a) b,$&
$c (a  b) $&$ =$&$ \delta^{-1} (c a) b - (c  b) a.$ 

\end{longtable}

Following the approach in \cite{GK94}, we compute the dual operad $\delta$-$\mathcal{L}eib^!$, where the operad $\delta$-$\mathcal{L}eib$ governs the variety of $\delta$-Leibniz algebras. Then,
 
\begin{longtable}{rcl}
$\big[[a \otimes x, b \otimes y], c \otimes z\big] $&$+$&
$\big[[b \otimes y, c \otimes z], a \otimes x\big] \ +\  
\big[[c \otimes z, a \otimes x], b \otimes y\big]$ \\
&$=$&  $((a  b)  c) \otimes ((x \bullet y) \bullet z) - (c  (a  b)) \otimes (z \bullet (x \bullet y))$ \\
&&$\quad - ((b  a)  c) \otimes ((y \bullet x) \bullet z) + (c  (b  a)) \otimes (z \bullet (y \bullet x))$ \\
&&$\quad + ((b  c)  a) \otimes ((y \bullet z) \bullet x) - (a  (b  c)) \otimes (x \bullet (y \bullet z))$ \\
&&$\quad - ((c  b)  a) \otimes ((z \bullet y) \bullet x) + (a  (c  b)) \otimes (x \bullet (z \bullet y))$ \\
&&$\quad + ((c  a)  b) \otimes ((z \bullet x) \bullet y) - (b  (c  a)) \otimes (y \bullet (z \bullet x))$ \\
&&$\quad - ((a  c)  b) \otimes ((x \bullet z) \bullet y) + (b  (a  c)) \otimes (y \bullet (x \bullet z))\ =$\\

\multicolumn{3}{l}{$\mbox{
\big(
by using the above presentation of the 6 non-basis elements of degree 3}$}\\ 
\multicolumn{3}{r}{$\mbox{
as a linear combination of basis elements, we have \big)}$}\\

&$=$& 
$((a  b)  c) \otimes \big(-\delta^{-1} x \bullet (y \bullet z)- x \bullet (z \bullet y) +(x \bullet y ) \bullet z\big) +$\\
&& 
$((b  a)  c) \otimes \big(\delta^{-1} y \bullet (x \bullet z)+ y \bullet (z \bullet x) -(y \bullet x ) \bullet z\big) +$\\
&& $((c  a)  b) \otimes \big(-\delta^{-1} z \bullet (x \bullet y)-z \bullet (y \bullet x) +(z \bullet x ) \bullet y\big) +$\\

&& $((c  b)  a) \otimes \big(\delta^{-1} z \bullet (y \bullet x)+z \bullet (x \bullet y) -(z \bullet y) \bullet x\big) +$\\

&& $((b  c)  a) \otimes \big(-\delta^{-1} y \bullet (z \bullet x)-y \bullet (x \bullet z) +(y \bullet z) \bullet x\big) +$\\
&& $((a  c)  b) \otimes \big(\delta^{-1} x \bullet (z \bullet y)+x \bullet (y \bullet z) -(x \bullet z) \bullet y\big).$

\end{longtable}
Hence, the dual  operad $\delta$-$\mathcal{L}eib^!$ is  equivalent to the following:
\begin{longtable}{rcl}
$\delta \big  ( (x \bullet y) \bullet z - x \bullet (z \bullet y) \big) = x \bullet(y \bullet z).$
\end{longtable}

\subsection{$\delta$-Zinbiel algebras}

\begin{definition}\label{deltaLeibniz}
Let $Z$ be an algebra and $\delta$ be a fixed complex number. Then   $Z$ is called a  $\delta$-Zinbiel   algebra if the following identity holds true:
    \begin{equation} \label{dZIdentity}
      x(yz) \ = \ \delta \big(  (xy)z - x(zy) \big).
\end{equation}
\end{definition}

Let $\mathcal{Z}inb_{\delta}$ be a variety of $\delta$-Zinbiel algebras for some $\delta$. Then $\mathcal{Z}inb_0$ is the variety of  $2$-step left nilpotent algebras, and $\mathcal{Z}inb_1$ is the variety of Zinbiel  algebras.
Hence, the varieties of $\delta$-Zinbiel   algebras give a type of continuous deformation 
of the variety of Zinbiel algebras to the variety of  $2$-step left nilpotent algebras and vice versa.

\medskip

The classification of two-dimensional algebras over an algebraically closed field was established in \cite{kv16}. We present a classification of two-dimensional $\delta$-Zinbiel algebras, for which direct computations can be achieved.

\begin{theorem}\label{cldz}
Let $Z$ be a nontrivial $2$-dimensional $\delta$-Zinbiel algebra, then it is isomorphic to one of the algebras listed below

\begin{longtable}{|l|c|l|l|l|l|}

\hline
 ${\bf A}_3$ &$\delta$ & $e_1e_1= e_2$&  
 $e_1e_2= 0$ & $e_2e_1=0 $& $e_2e_2=0$ \\
\hline

\hline
 ${\bf B}_2(0)$ &$0$ & $e_1e_1= 0$&  
 $e_1e_2= e_1$ & $e_2e_1=0 $& $e_2e_2=0$ \\
\hline

\end{longtable}
\noindent Here: 
notations in  the first column are  given from {\rm \cite{kv16}} and 
the number $\gamma$ in the second column means that it is a $\gamma$-Zinbiel algebra. 
\end{theorem}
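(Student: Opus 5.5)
We will classify via the Kaygorodov--Volkov list of $2$-dimensional algebras from \cite{kv16}, exactly as was done for Theorem \ref{cldn}, but first we will exploit the structure of identity \eqref{dZIdentity} to cut the search down. The first step is to observe that a $\delta$-Zinbiel algebra is, after passing to the opposite multiplication $x\circ y:=yx$, an algebra satisfying $(zy)x=\delta\big(z(yx)-(yz)x\big)$ in $\circ$-notation. So a useful reduction is to compare with the $\delta$-Leibniz list and with right/left multiplication operators. We will use the operator form: writing $L_x$ for left multiplication, \eqref{dZIdentity} reads
\begin{center}
$L_xL_y \ = \ \delta\big(L_{xy}-L_xR_y\big)$, \ i.e., \ $L_x\big(L_y+\delta R_y\big)=\delta L_{xy}.$
\end{center}

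The second step is a case split on $\delta$. For $\delta=0$ the identity says $x(yz)=0$, i.e., $LL^2=0$ ($2$-step left nilpotent); among nontrivial $2$-dimensional algebras from \cite{kv16} we will pick those with $L\cdot L^2=0$. Since $L^2$ is a proper or full subspace, if $L^2=L$ then $L\cdot L=0$, contradiction; so $\dim L^2=1$, say $L^2=\langle e\rangle$ with $Le=0$. Choosing a complement $f$, the products are $ff=\alpha e$, $ef=\beta e$, and rescaling/changing basis gives ${\bf A}_3$ (if $\beta=0$) or ${\bf B}_2(0)$ (if $\beta\neq0$, after replacing $f$ by $f-\frac{\alpha}{\beta}e$ and normalizing). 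For $\delta\neq0$ we will show that the only possibility is ${\bf A}_3$. Plugging $x=y=z$ gives $x x^2=\delta(x^2x-xx^2)$, and further substitutions of $x^2$ into \eqref{dZIdentity} should force powers to vanish; the goal is to prove $L$ is nilpotent with $L^3=0$, after which the only nontrivial $2$-dimensional algebra with $L^3=0$ and $\dim L^2=1$, $L^2\subseteq{\rm Ann}(L)$ is ${\bf A}_3$ (one checks ${\bf A}_3$ satisfies \eqref{dZIdentity} for every $\delta$, and ${\bf B}_2(0)$ fails unless $\delta=0$).

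The main obstacle is the nilpotency claim for $\delta\neq0$ in dimension $2$, since one must exclude algebras with idempotents (the ${\bf D}$, ${\bf E}$ families that survived in the Leibniz case at $\delta=\frac12$). I would first try the idempotent test: if $e^2=e$, then \eqref{dZIdentity} with $x=y=z=e$ gives $e=\delta(e-e)=0$, so there are no nonzero idempotents. Over $\mathbb C$ a finite-dimensional power-associative algebra without idempotents is nil, but power-associativity is not given, so instead I would run the direct computation: take the \cite{kv16} families ${\bf A}_1(\alpha),{\bf A}_2,{\bf A}_4(\alpha),{\bf B}_1(\alpha),{\bf B}_2(\alpha),{\bf B}_3,{\bf C}(\alpha,\beta),{\bf D}_i,{\bf E}_i$ and substitute basis triples into \eqref{dZIdentity}. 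The families ${\bf C},{\bf D},{\bf E},{\bf A}_1,{\bf A}_2,{\bf A}_4$ all contain an element $x$ with $x^2$ not a multiple-killing vector, and the triple $(e_1,e_1,e_1)$ already yields $e_1e_1^2=\delta(e_1^2e_1-e_1e_1^2)$, which together with $(e_1,e_1,e_2)$ and $(e_2,e_1,e_1)$ produces polynomial conditions forcing the parameters outside their allowed ranges; the ${\bf B}$ families have $\dim L^2=1$ and are handled as in the $\delta=0$ paragraph, leaving ${\bf A}_3$ only (and ${\bf B}_2(0)$ exactly when $\delta=0$).
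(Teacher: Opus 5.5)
Your overall route, reducing to the list of \cite{kv16} and testing the identity on basis triples, is what the paper relies on; it gives no argument beyond ``direct computations can be achieved''. Several of your pieces are correct. The $\delta=0$ paragraph is complete and does not even need the list. The idempotent test is correct and disposes at once of ${\bf C}(\alpha,\beta)$, where $e_2e_2=e_2$, and of every ${\bf D}_i$ and ${\bf E}_i$, where $e_1e_1=e_1$; say this explicitly rather than folding these families into the triple computations. Your triples do eliminate ${\bf A}_1(\alpha)$, ${\bf A}_2$ and ${\bf A}_4(\alpha)$.

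\textbf{The gap: the ${\bf B}$ families for $\delta\neq0$.} Two of your claims there are wrong.
\begin{itemize}
\item It is not true that all ${\bf B}$ families have $\dim L^2=1$. In ${\bf B}_1(\alpha)$ the products $e_1e_2=(1-\alpha)e_1+e_2$ and $e_2e_1=\alpha e_1-e_2$ are linearly independent (the determinant is $-1$), so $L^2=L$.
\item Even where $\dim L^2=1$, the $\delta=0$ argument rests on $L\cdot L^2=0$. That is exactly what is unavailable when $\delta\neq0$.
\end{itemize}
So, as written, ${\bf B}_1(\alpha)$, ${\bf B}_2(\alpha)$ and ${\bf B}_3$ are not excluded for $\delta\neq0$.

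\textbf{How to close it.} The same kind of substitution works.
\begin{itemize}
\item For ${\bf B}_1(\alpha)$ and ${\bf B}_3$, the triple $(e_1,e_1,e_2)$ gives $w=\delta w$ with $w=e_1e_2\neq0$, hence $\delta=1$. The triple $(e_2,e_1,e_1)$ gives $0=\delta\,(e_2e_1)e_1$ with $(e_2e_1)e_1\neq0$, hence $\delta=0$. These contradict each other.
\item For ${\bf B}_2(\alpha)$ your three triples are all trivial. Use $(e_1,e_2,e_2)$ instead, which gives $\delta(1-\alpha)^2e_1=0$, so $\alpha=1$. Then $(e_2,e_2,e_1)$ gives $e_1=0$.
\end{itemize}
Alternatively, treat $\dim L^2=1$ uniformly. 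Write $L^2=\langle e\rangle$, $xe=\lambda(x)e$, $ex=\rho(x)e$, $xy=\mu(x,y)e$. The identity becomes $\lambda(x)\big(\mu(y,z)+\delta\mu(z,y)\big)=\delta\,\mu(x,y)\rho(z)$.
\begin{itemize}
\item If $\lambda=0$, this forces $\rho=0$ (since $\delta\neq0$ and $\mu\neq0$), which gives ${\bf A}_3$.
\item If $\lambda\neq0$, setting $y=z=e$ gives $\lambda(e)=0$. If moreover $\rho\neq0$, then $\mu=\lambda\otimes\nu$ for some $\nu$, so $\rho=\lambda(e)\nu=0$, a contradiction. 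If $\rho=0$, then $\mu(y,z)=-\delta\mu(z,y)$, so $\lambda(x)=\mu(x,e)=-\delta\rho(x)=0$, again a contradiction.
\end{itemize}

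Finally, drop the stated ``goal'' of proving nilpotency with $L^3=0$ via powers, and the sentence about ``multiple-killing vectors''. Neither carries an argument; the case check is what actually proves the theorem.
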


\begin{proposition}\label{nilpdZ}
    Let $Z$ be an $n$-dimensional nilpotent $\delta$-Zinbiel algebra $\big( \delta \notin \{ 0,1\} \big)$,
    then the nilpotency index of $Z$ is not greater than $n+1.$
    If the nilpotency index of $Z$ is equal to $n+1,$ then $\delta=-1$ and $Z$ is one-generated.
\end{proposition}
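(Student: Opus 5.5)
By the $\delta$-Zinbiel identity \eqref{dZIdentity}, with $\delta\neq 0$, every left-bracketed product can be rewritten in terms of right-bracketed ones:
\begin{center}
$(xy)z\ =\ \delta^{-1}x(yz)+x(zy).$
\end{center}
By induction on the length, every product of $k$ elements is then a linear combination of right-normed words $x_1\big(x_2(\ldots(x_{k-1}x_k)\ldots)\big)$. So $Z^k$ is contained in $L^{k]}:=L\big(L(\ldots(LL)\ldots)\big)$ with $k$ factors. This mirrors the proof of Proposition \ref{nilpL} with right brackets in place of left ones. The chain $Z=Z^1\supseteq Z^2\supseteq\cdots$ is then a descending chain of ideals. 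Since $Z$ is nilpotent, it strictly decreases until it reaches zero, because $Z^k=Z^{k+1}$ forces $Z^k=Z^{k+1}=\cdots=0$. In dimension $n$ this gives $Z^{n+1}=0$, so the nilpotency index is at most $n+1$.

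Now suppose the index equals $n+1$. Then every quotient $Z^k/Z^{k+1}$ is one-dimensional for $1\le k\le n$. In particular $\dim Z/Z^2=1$, so $Z$ is generated by a single element $e$. I will pass to the associated graded algebra ${\rm gr}\,Z=\bigoplus_k Z^k/Z^{k+1}$, which is again a $\delta$-Zinbiel algebra because the identity is homogeneous, and which has the same nilpotency index. It has a basis $e_1,\dots,e_n$ with $e_1=e$ and $e_{k+1}=e_1e_k$. The latter is nonzero, since by the first paragraph $Z^{k+1}$ is spanned modulo $Z^{k+2}$ by right-normed words in $e$. Write $e_ie_j=c_{ij}e_{i+j}$, where $c_{1j}=1$ and $e_{m}=0$ for $m>n$.

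Next I evaluate \eqref{dZIdentity} on triples of basis elements to get equations for the $c_{ij}$. Taking $(x,y,z)=(e_1,e_1,e_j)$ gives
\begin{center}
$1\ =\ \delta\big(c_{2j}-c_{j1}\big), \quad j+2\le n.$
\end{center}
Taking $(x,y,z)=(e_1,e_j,e_1)$ gives
\begin{center}
$c_{j1}\ =\ \delta\big(c_{j+1,1}-1\big).$
\end{center}
Taking $(x,y,z)=(e_i,e_1,e_1)$ gives
\begin{center}
$c_{i2}\ =\ \delta\big(c_{i1}c_{i+1,1}-c_{i2}\big).$
\end{center}
Starting from $(e_1,e_1,e_1)$, which gives $(1+\delta)\,e_1e_2=\delta\,e_2e_1$, i.e. $c_{11}=(1+\delta)/\delta$, the second relation determines the $c_{j1}$ recursively. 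The first and third relations then give two independent expressions for $c_{2j}$ and for $c_{i2}$. Equating them at a degree where both are defined should give a polynomial equation in $\delta$. The expected outcome is that, for $\delta\notin\{0,1\}$, this forces $\delta=-1$, with the case $\delta=-1$ consistent (e.g. $c_{j1}$ periodic).

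The main obstacle is this last step: choosing which overlap of relations to compare, and handling small $n$. For $n\le 2$ the top-degree relations are vacuous, so there I will check directly against the classification in Theorem \ref{cldz}; the only algebra there with $\delta\neq 0$ is ${\bf A}_3$, of index $3$. For $n\ge 3$, I will first try comparing $c_{21}$ from the $(e_1,e_1,e_1)$ and $(e_1,e_1,e_2)$-type relations with the value forced by $(e_2,e_1,e_1)$ in degree $4$, restricted to degree $\le n$. If this does not single out $\delta=-1$, I will add the relation from $(e_1,e_2,e_1)$ in degree $4$.
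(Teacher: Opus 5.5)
\textbf{Genuine gap: the step meant to force $\delta=-1$ cannot be completed for $n\le 3$, because the statement is false there.}

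Your first part is fine. So are the reduction to a graded null-filiform algebra ${\rm gr}\,Z$ with $e_{k+1}=e_1e_k$ and the three families of relations. There is one slip: $(e_1,e_1,e_1)$ determines $c_{21}=\frac{\delta+1}{\delta}$, not $c_{11}$, which is $1$ by your normalization.

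The gap is the final step, which you leave as an ``expected outcome''. Your own check for $n=2$ is in fact a counterexample. ${\bf A}_3$ from Theorem~\ref{cldz} is $\delta$-Zinbiel for every $\delta$, with $Z^2=\langle e_2\rangle\neq 0$ and $Z^3=0$. So its nilpotency index is $3=n+1$ while $\delta$ is arbitrary.

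For $n=3$ the only relation of degree $\le 3$ comes from $(e_1,e_1,e_1)$. Hence
\[
e_1e_1=e_2,\qquad e_1e_2=e_3,\qquad e_2e_1=\tfrac{\delta+1}{\delta}\,e_3
\]
is a $\delta$-Zinbiel algebra for every $\delta\neq 0$, since every basis triple containing $e_2$ or $e_3$ lands in degree $\ge 4$. Its nilpotency index is $4=n+1$.

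So no choice of overlaps will single out $\delta=-1$ when $n\le 3$, and the statement needs the extra hypothesis $n\ge 4$. The paper's proof does not notice this either: after the trivial bound it just quotes Theorem 3.1 of [MO14].

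For $n\ge 4$ your plan does succeed, and it gives a self-contained replacement for that citation. The degree-$4$ relations from $(e_1,e_2,e_1)$, $(e_1,e_1,e_2)$ and $(e_2,e_1,e_1)$ give
\[
c_{31}=\frac{\delta^2+\delta+1}{\delta^2},\qquad c_{22}=\frac{\delta+2}{\delta},\qquad (1+\delta)\,c_{22}=\delta\, c_{21}c_{31}.
\]
Substituting $c_{21}=\frac{\delta+1}{\delta}$ into the last relation yields $(\delta+1)(\delta-1)=0$, hence $\delta=-1$ because $\delta\neq 1$.

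Combined with your first paragraph, this proves the proposition for $n\ge 4$. There, the strict decrease of the $Z^k$ is justified by $Z^{k+1}=ZZ^k$, which your right-normed rewriting provides. Only necessity is claimed, so no existence check for $\delta=-1$ is needed.
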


\begin{proof}
 It is easy to see that each product of $k$ elements in a $\delta$-Zinbiel algebra can be written as
 a sum of products with right-ordered brackets.
 Hence,  
 \begin{center}
     $ \sum\limits_{i+j=k} Z^iZ^j \ =: \ 
     Z^k \subseteq \underbrace{Z\big(\ldots (Z(ZZ))\ldots\big)}_{k \  times}\ =:\ Z^{k]}.$ 
\end{center}
Obviously, 
$ 0 = Z^{n+1]} \subseteq Z^{n-1]} \subseteq \ldots \subseteq Z^{2]} \subseteq Z^{1]}=Z.$
Hence, the nilpotency index of $Z$ is not greater than $n+1$. 
 Thanks to \cite[Theorem 3.1]{MO14}, there is only one $n$-dimensional $\delta$-Zinbiel algebras with the nilpotency index $n+1,$ i.e., a one-generated $(-1)$-Zinbiel algebra.
 Hence the nilpotency index of $L$ for $\delta \neq -1$ is not greater than $n.$
\end{proof}

\begin{corollary}\label{lengthZ}
    The length\footnote{For the definition of the length of algebras see, Definition \ref{lengthdef}.} of an $n$-dimensional  $\delta$-Zinbiel algebra  $Z$ $\big(\delta \neq 0,1\big)$
  is not greater than $n.$ 
      If the length  of $Z$ is equal to $n,$ then $\delta=-1$ and $Z$ is a nilpotent one-generated algebra.
\end{corollary}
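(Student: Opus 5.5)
The corollary follows from Proposition \ref{nilpdZ}, in the same way that Corollary \ref{lengthL} follows from Proposition \ref{nilpL}. Let $Z$ be an $n$-dimensional $\delta$-Zinbiel algebra with $\delta\neq 0,1$ and $l(Z)=k$, realised by a generating set $S$ with $l(S)=k$. The plan is to pass to a nilpotent algebra of the same dimension and length, and then to bound the length by the nilpotency index.

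\textbf{Step 1: reduction to the nilpotent case.} Following the idea of \cite[Lemma 38]{KKL23}, I would construct an $n$-dimensional nilpotent $\delta$-Zinbiel algebra $Z_0$ with $l(Z_0)\ge k$. The construction is the associated graded algebra of the filtration $Z_i=\sum_{j\le i}\langle S^j\rangle$, namely $\mathrm{gr}\,Z=\bigoplus_i Z_i/Z_{i-1}$.
\begin{itemize}
\item The defining identity \eqref{dZIdentity} is multilinear and homogeneous, so $\mathrm{gr}\,Z$ is again a $\delta$-Zinbiel algebra.
\item $\mathrm{gr}\,Z$ has dimension $n$.
\item It is positively graded with components in degrees $1,\dots,k$, all nonzero, so it is nilpotent.
\item It is generated by the image of $S$, whose length is exactly $k$.
\end{itemize}
Hence it suffices to prove the corollary for nilpotent $Z$.

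\textbf{Step 2: length versus nilpotency index.} For a nilpotent algebra with nilpotency index $m$ (so $Z^m=0$ and $Z^{m-1}\neq 0$), every word of length at least $m$ vanishes. Therefore $\bigcup_{i\le m-1}\langle S^i\rangle=Z$, which gives $l(S)\le m-1$. Proposition \ref{nilpdZ} gives $m\le n+1$, hence $k\le n$.

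\textbf{Step 3: the equality case.} Suppose $k=n$. Then the graded algebra $Z_0=\mathrm{gr}\,Z$ has $n$ nonzero homogeneous components in degrees $1,\dots,n$ and total dimension $n$. So each component is one-dimensional and $Z_0$ has nilpotency index $n+1$. By Proposition \ref{nilpdZ}, $\delta=-1$ and $Z_0$ is one-generated. Moreover $\dim Z_1=\dim(Z_0)_1=1$, so $Z$ itself is generated by a single element $x\in S$ modulo nothing further, i.e. $Z$ is one-generated.

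\textbf{Step 4: nilpotency of $Z$, the main obstacle.} It remains to show that $Z$ itself, not only $Z_0$, is nilpotent. Since every product of $k$ elements can be rewritten with right-ordered brackets, $Z$ is spanned by the right-normed powers $x, x(x), x(x(x)),\dots$ of the generator. Let $D=R_x$ or $L_x$ be the operator realising this chain. Because $Z$ is one-generated of dimension $n$ with chain length $n$, the $n$ powers form a basis, and the power of degree $n+1$ is some combination $\sum_i c_i\, x^{(i)}$ of them.

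I would rule out $c_i\neq 0$ by substituting $y=z=x$ (and nearby monomials) into \eqref{dZIdentity} with $\delta=-1$. This yields homogeneity-type relations forcing $(\text{degree } i)\cdot c_i$-style constraints that vanish only when all $c_i=0$. I expect this to be the delicate step. A cleaner route may be to show that $Z^2\neq Z$: otherwise an argument as in case (B) of Theorem \ref{simplantil} would force trivial multiplication. Nilpotency then follows from the finiteness of the descending chain $Z^{k]}$, together with the fact that this chain strictly decreases at each step because of the dimension count.
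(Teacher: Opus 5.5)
Your Steps 1--3 follow the paper's own reduction, and your remark $\dim Z_1=\dim(Z_0)_1=1$ is a clean way to see that $Z$ itself is one-generated. The paper only asserts that a nilpotent $Z_0$ of the same length exists and then applies Proposition \ref{nilpdZ}. One small point: the claim that every graded piece in degrees $1,\dots,k$ is nonzero needs the right-normed rewriting, which is valid since $\delta\neq0$. That rewriting gives $Z_{i+1}=Z_1+\langle S\rangle Z_i$, so the filtration grows strictly until it stabilizes.

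\textbf{The genuine gap is Step 4:} you never prove that $Z$ itself is nilpotent.
\begin{itemize}
\item Your first route is only a hope. Setting $y=z=x$ yields nothing beyond $(xx)x=0$, and you exhibit no mechanism that produces ``degree times $c_i$'' constraints.
\item Your second route is invalid. A descending chain $Z^{1]}\supseteq Z^{2]}\supseteq\cdots$ in a finite-dimensional space must stabilize, but it can stabilize at a nonzero subspace, which is exactly what non-nilpotency means. The condition $Z^2\neq Z$ only excludes stabilization at the first step, and the dimension count gives no strict decrease afterwards.
\item Case (B) of Theorem \ref{simplantil} rests on the anti-Leibniz identity $x[y,z]_1=0$. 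In an anti-Zinbiel algebra you have instead $x[y,z]_1=-(xy)z$, which need not vanish.
\end{itemize}

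The missing ingredient is that every finite-dimensional anti-Zinbiel algebra is nilpotent (Theorem \ref{naz}). The paper proves it later, via the solvability Lemma \ref{solvableAZ} and a Towers-type minimal-ideal argument. Once Step 3 gives $\delta=-1$, this is exactly how the paper concludes.

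In your one-generated situation you can also finish by hand. Put $r_1=a$ and $r_{m+1}=ar_m$, and use the identity in the form $(xy)z=x(zy)-x(yz)$. Induction gives $r_{2m}a=0$ and $r_{2m+1}a=r_{2m+2}$. From these you get $r_2r_4=-r_6$, $r_4r_2=0$, $r_6r_2=r_8$ and $r_2r_6=-r_8$. Evaluating the identity at $(r_2,r_4,r_2)$ then gives $-r_8=r_8$, so $r_8=0$. Since every product of $m$ elements of $Z$ is a combination of the $r_{m'}$ with $m'\ge m$, it follows that $Z^8=0$.
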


\begin{proof}
    Let $Z$ be an $n$-dimensional $\delta$-Zinbiel algebra $\big(\delta \neq 0,1\big)$, such that $l({\rm A})=k.$
    Obviously\footnote{The idea is completely similar to the proof of \cite[Lemma 38]{KKL23}.}, there exists an $n$-dimensional nilpotent $\delta$-Zinbiel algebra $Z_0$, such that $l({Z_0})=k.$
    Thanks to Proposition  \ref{nilpdZ}, $k\le n$, and
    $k=n$ if and only if $\delta=-1.$ 
    As we will see from Theorem \ref{naz} $\big($see below$\big),$ 
    each finite-dimensional anti-Zinbiel algebra is nilpotent.
The statement is proved. \end{proof}

\begin{proposition}\label{34}
    Let $Z$ be a $\delta$-Zinbiel algebra, 
    then $Z$ is Lie admissible if and only if $\delta =\frac 12$ or it satisfies 
        $\sum\limits_{\sigma \in \mathbb S_3}  (-1)^{\sigma}x_{\sigma(1)}  \big(x_{\sigma(2)} x_{\sigma(3)}\big) \ =\ 0.$
\end{proposition}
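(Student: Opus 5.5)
The plan is to mirror the argument of Proposition \ref{liadmd}, with the roles of left- and right-normed products exchanged. Recall that an algebra is Lie admissible if and only if it satisfies
\[
\sum\limits_{\sigma \in \mathbb S_3} (-1)^{\sigma}\big(x_{\sigma(1)},x_{\sigma(2)},x_{\sigma(3)}\big)\ =\ 0,
\]
where $(x,y,z)=(xy)z-x(yz)$ is the usual associator. The idea is to rewrite each associator in a $\delta$-Zinbiel algebra through right-normed products only, and then to compare this alternating sum with $\sum\limits_{\sigma \in \mathbb S_3}(-1)^{\sigma} x_{\sigma(1)}\big(x_{\sigma(2)}x_{\sigma(3)}\big)$.

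First, assume $\delta\neq 0$, in line with the paper's standing conventions. Then \eqref{dZIdentity} can be solved for the left-normed product:
\[
(xy)z\ =\ \delta^{-1}x(yz)+x(zy).
\]
Substituting this into the associator gives
\[
(x,y,z)\ =\ (\delta^{-1}-1)\,x(yz)+x(zy).
\]
Next, take the alternating sum over $\mathbb S_3$. The term $x_{\sigma(1)}(x_{\sigma(3)}x_{\sigma(2)})$ is the right-normed word of the permutation $\sigma\circ(2\,3)$, whose sign is opposite to that of $\sigma$. Reindexing therefore turns its contribution into $-\sum\limits_{\sigma \in \mathbb S_3}(-1)^{\sigma}x_{\sigma(1)}(x_{\sigma(2)}x_{\sigma(3)})$. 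Altogether,
\[
\sum\limits_{\sigma \in \mathbb S_3} (-1)^{\sigma}\big(x_{\sigma(1)},x_{\sigma(2)},x_{\sigma(3)}\big)\ =\ (\delta^{-1}-2)\sum\limits_{\sigma \in \mathbb S_3}(-1)^{\sigma}x_{\sigma(1)}\big(x_{\sigma(2)}x_{\sigma(3)}\big).
\]

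The conclusion follows from the coefficient $\delta^{-1}-2$. If $\delta=\frac12$, the coefficient vanishes, so the Lie-admissibility identity holds automatically. If $\delta\neq\frac12$, the coefficient is nonzero, so $Z$ is Lie admissible if and only if $\sum\limits_{\sigma \in \mathbb S_3}(-1)^{\sigma}x_{\sigma(1)}(x_{\sigma(2)}x_{\sigma(3)})=0$.

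The only delicate point is the sign bookkeeping in the reindexing step, which I would check directly by listing the six permutations. A second issue is the degenerate value $\delta=0$. There the algebra satisfies $x(yz)=0$, so the right-normed condition holds trivially, while Lie admissibility reduces to $\sum\limits_{\sigma \in \mathbb S_3}(-1)^{\sigma}(x_{\sigma(1)}x_{\sigma(2)})x_{\sigma(3)}=0$, which is a genuinely different condition. I would therefore state explicitly that the proof uses the standing assumption $\delta\neq 0$.
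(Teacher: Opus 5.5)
Your proof is correct and is essentially the paper's argument: solve the $\delta$-Zinbiel identity for $(xy)z$, express the associator through right-normed words, and compare the alternating sums over $\mathbb S_3$. Your coefficient $\delta^{-1}-2$ is the right one (the paper's displayed formula for $(x,y,z)$ has the two coefficients interchanged and ends with $2-\delta^{-1}$, a sign slip that does not affect the conclusion), and your caveat $\delta\neq 0$ is justified, since the paper's proof also tacitly divides by $\delta$ and the statement genuinely fails at $\delta=0$ (e.g.\ the algebra with $e_1e_2=e_4$, $e_4e_3=e_5$ satisfies $x(yz)=0$ but is not Lie admissible).
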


\begin{proof}
    It is easy to see that $Z$ satisfies

    \begin{center}
        $\big(x,y,z\big) \ =\   x(yz)+ (\delta^{-1}-1)x(zy).$
    \end{center}
Hence $\big(x,y,z\big) - \big(x,z,y\big) = (2-\delta^{-1}) \big(x(zy)-x(yz) \big).$
It follows, 
\begin{center}
$\sum\limits_{\sigma \in \mathbb S_3} (-1)^{\sigma}\big (x_{\sigma(1)},x_{\sigma(2)}, x_{\sigma(3)} \big)\ =\  
(2-\delta^{-1}) \sum\limits_{\sigma \in \mathbb S_3}  (-1)^{\sigma}x_{\sigma(1)}  \big(x_{\sigma(2)} x_{\sigma(3)}\big),$
\end{center}
 where the left side gives the Lie-admissible identity.
Hence, our statement is proved. \end{proof}

\begin{proposition}
        Let $Z$ be a $\delta$-Zinbiel  algebra $\big(\delta \neq 0\big).$ 
    Then $Z$ is a $\big(\delta^{-1}-1\big)$-right-symmetric algebra. 
\end{proposition}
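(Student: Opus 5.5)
The plan is a direct computation that mirrors the proof of the analogous statement for $\delta$-Leibniz algebras. Since $\delta\neq 0$, we can solve the defining identity \eqref{dZIdentity} of $\delta$-Zinbiel algebras for the left-normed product. This expresses $(xy)z$ entirely through right-normed products:
\begin{center}
$(xy)z \ = \ \delta^{-1} x(yz) + x(zy).$
\end{center}
This is the only place where $\delta\neq 0$ is used, and it is the key step.

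Next, set $\gamma := \delta^{-1}-1$ and compute the $\gamma$-associator $\big(x,y,z\big)_{\gamma} = (xy)z - \gamma\, x(yz)$. Substituting the expression above gives
\begin{center}
$\big(x,y,z\big)_{\delta^{-1}-1} \ = \ \delta^{-1}x(yz) + x(zy) - (\delta^{-1}-1)\,x(yz) \ = \ x(yz)+x(zy).$
\end{center}
The right-hand side is manifestly symmetric under $y\leftrightarrow z$. Hence
\begin{center}
$\big(x,y,z\big)_{\delta^{-1}-1} \ = \ \big(x,z,y\big)_{\delta^{-1}-1},$
\end{center}
which is precisely the $(\delta^{-1}-1)$-right-symmetric identity in the sense of the footnote attached to the corresponding result for $\delta$-Leibniz algebras.

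There is no real obstacle. The only point needing care is choosing the parameter $\gamma$ so that the $\delta^{-1}x(yz)$ term is reduced to a multiple of $x(yz)$ that pairs with $x(zy)$ symmetrically. One could instead keep $\gamma$ general, write $\big(x,y,z\big)_\gamma - \big(x,z,y\big)_\gamma = (\delta^{-1}-\gamma-1)\big(x(yz)-x(zy)\big)$, and observe that this vanishes exactly for $\gamma=\delta^{-1}-1$. This also shows that the parameter is the natural one.
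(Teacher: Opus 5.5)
Your proof is correct and is essentially the paper's argument. The paper writes the defining identity a second time with $y$ and $z$ swapped and subtracts the two. You instead solve once for $(xy)z$ and observe that $(x,y,z)_{\delta^{-1}-1}=x(yz)+x(zy)$ is symmetric in $y,z$, which is the same computation presented a bit more transparently. One small point of wording in your closing remark: the difference vanishes \emph{identically} for $\gamma=\delta^{-1}-1$, which is all the proof needs.
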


\begin{proof}
It is easy to see that
\begin{longtable}{lcl}
$\delta(xy)z$&$=$&$ x(yz)+\delta x(zy),$\\ 
$\delta(xz)y$&$=$&$ x(zy)+\delta x(yz).$\\ 
\end{longtable}
Subtracting these two identities, we obtain the following identity:
\begin{center}
    $\delta(xy)z + (\delta-1)x(yz)\ =\ \delta(xz)y+(\delta-1)x(zy).$
    \end{center}
Hence, each $\delta$-Zinbiel algebra is a $\big(\delta^{-1}-1\big)$-right-symmetric algebra.
\end{proof}

\begin{definition}
Let $C(a) = \big\{x\in L \ |\  ax =xa=0 \big\}$ be the annihilator of $a\in L.$
\end{definition}

\begin{lemma}\label{centre} 
Let  $Z$ be a $\delta$-Zinbiel algebra $\big( \delta \notin \{ 0,-1 \}\big).$ 
Then $C(a)\subseteq C(a^2)$ for all $a\in Z$.
\end{lemma}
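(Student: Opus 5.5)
The plan is to check the two conditions defining $C(a^2)$ directly. Fix $a\in Z$ and $x\in C(a)$, so $ax=xa=0$. We need $x a^2=0$ and $a^2x=0$. Each will follow from one substitution into the $\delta$-Zinbiel identity \eqref{dZIdentity}, $x(yz)=\delta\big((xy)z-x(zy)\big)$, combined with the hypotheses on $x$. This is where the two excluded values of $\delta$ enter.

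First, for $xa^2$, substitute $y=z=a$ into \eqref{dZIdentity}:
\begin{center}
$x(aa)\ =\ \delta\big((xa)a - x(aa)\big)\ =\ -\,\delta\, x(aa),$
\end{center}
because $xa=0$. Hence $(1+\delta)\,x a^2=0$, and since $\delta\neq -1$ we obtain $xa^2=0$.

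Second, for $a^2x$, substitute $(x,y,z)\mapsto(a,a,x)$ into \eqref{dZIdentity}:
\begin{center}
$a(ax)\ =\ \delta\big((aa)x - a(xa)\big).$
\end{center}
The left-hand side vanishes since $ax=0$, and $a(xa)=0$ since $xa=0$. This leaves $\delta\,a^2x=0$, and since $\delta\neq 0$ we get $a^2x=0$.

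Together the two steps give $x\in C(a^2)$, hence $C(a)\subseteq C(a^2)$. The only real choice is which substitution to use for each of the two products. There is no genuine obstacle beyond picking the substitutions that isolate $xa^2$ and $a^2x$ with nonzero coefficients $1+\delta$ and $\delta$, respectively.
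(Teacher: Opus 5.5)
Your proposal is correct and follows essentially the same route as the paper. The paper also substitutes $(a,a,x)$ and $(x,a,a)$ into the $\delta$-Zinbiel identity; it just writes the identity in the solved form $(xy)z=\delta^{-1}x(yz)+x(zy)$. As in your argument, $\delta\neq 0$ gives $a^2x=0$ and $\delta\neq -1$ gives $xa^2=0$.
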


\begin{proof} For any $x\in C(a)$ we have 
\begin{longtable}{rcccccl}
$(aa)x $&$=$&$ \delta^{-1} a(ax)+a(xa) $& $=$&$0$&$\Longrightarrow$&$ a^2x=0,$\\  
$0 $&$=$&$ (xa)a $& $=$&$\delta^{-1}x(aa)+x(aa)  $&$\Longrightarrow$&$  xa^2=0.$\\ 
\end{longtable}\noindent 
It means $x\in C(a^2)$ and  $C(a)\subseteq C(a^2)$ for all $a\in L.$
\end{proof} 

\begin{lemma}\label{centre} 
Let  $Z$ be a $\delta$-Zinbiel algebra $\big(\delta\notin \{-1,0,1\} \big)$. 
Then $C(a)$ is a subalgebra of $Z$.
\end{lemma}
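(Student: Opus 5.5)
The plan is to verify the two closure conditions directly from the $\delta$-Zinbiel identity \eqref{dZIdentity}, in the same style as the previous lemma. Take $x,y\in C(a)$, so $ax=xa=ay=ya=0$. Since $C(a)$ is clearly a subspace, it suffices to show $xy\in C(a)$, i.e. $(xy)a=0$ and $a(xy)=0$.

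\emph{Right product.} Since $\delta\neq 0$, rewrite \eqref{dZIdentity} as
\[
(xy)z\ =\ \delta^{-1}x(yz)+x(zy).
\]
Substituting $z=a$ gives
\[
(xy)a\ =\ \delta^{-1}x(ya)+x(ay)\ =\ 0,
\]
because $ya=ay=0$. This step is immediate.

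\emph{Left product.} This is the only step needing a small trick. Apply \eqref{dZIdentity} with the triple $(a,x,y)$:
\[
a(xy)\ =\ \delta\big((ax)y-a(yx)\big)\ =\ -\delta\, a(yx),
\]
since $ax=0$. Applying the same identity with $x$ and $y$ interchanged gives $a(yx)=-\delta\, a(xy)$, using $ay=0$. Combining the two relations yields
\[
a(xy)\ =\ \delta^{2}a(xy),\qquad\text{i.e.}\qquad (1-\delta^{2})\,a(xy)=0.
\]
Since $\delta\notin\{-1,1\}$, we conclude $a(xy)=0$.

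Hence $xy\in C(a)$, and $C(a)$ is a subalgebra. The hypotheses are used exactly as follows: $\delta\neq 0$ in the right product, and $\delta\neq\pm1$ in the left product. I expect no genuine obstacle beyond noticing this symmetrization of the left product.
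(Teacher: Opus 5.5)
Your proof is correct and follows the paper's argument exactly. The paper also gets $(xy)a=0$ from the identity applied to $(x,y,a)$, using $\delta\neq0$, and gets $(1-\delta^2)\,a(xy)=0$ from the triples $(a,x,y)$ and $(a,y,x)$. Your intermediate relations $a(xy)=-\delta\, a(yx)$ and $a(yx)=-\delta\, a(xy)$ are the correctly derived forms; the paper's displayed implications attach each relation to the wrong triple, which is harmless since only the pair is used.
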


\begin{proof} For any $x,y\in C(a)$ we have 
\begin{longtable}{rcccl}
$\delta (ax)y $&$=$&$  a(xy)+ \delta  a(yx) $&$\Longrightarrow$&$ \delta a(xy)+a(yx)=0,$\\  
$\delta (ay)x $&$=$&$   a(yx)+ \delta  a(xy) $&$\Longrightarrow$&$ \delta a(yx)+a(xy)=0,$\\ 
$\delta (xy)a$&$=$&$   x(ya)+\delta  x(ay)$&$\Longrightarrow$&$ (xy)a=0.$\\ \end{longtable}
\noindent From the first and second relations, we have $(1-\delta^2)a(xy)=0$. Hence, $xy\in C(a).$
\end{proof} 

\begin{lemma}\label{eigenvalue} 
Let $Z$ be a $\delta$-Zinbiel algebra $\big(\delta\neq-1,0\big)$. 
If $v$ is an eigenvector of the linear operator $L_a$ with eigenvalue $\mu,$ 
then if $v^2\neq 0,$ then  $L_a(v^2) = \frac{\mu\delta}{\delta+1}v^2.$
\end{lemma}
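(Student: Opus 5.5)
The plan is to apply the defining identity \eqref{dZIdentity} once, with the variables specialized so that both occurrences of $L_a$ act either on $v$ or on $v^2$. Then solve the resulting linear relation for $a(vv)$.

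Concretely, substitute $x=a$, $y=v$, $z=v$ into \eqref{dZIdentity}. This gives
\begin{center}
$a(vv)\ =\ \delta\big((av)v - a(vv)\big).$
\end{center}
Because the two inner factors coincide, the term $x(zy)$ equals $x(yz)=a(v^2)$. This coincidence is the key point: it lets the right-hand side be expressed again through $a(v^2)$.

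Next, use the eigenvector hypothesis $av=L_a(v)=\mu v$. This gives $(av)v=\mu\, vv=\mu v^2$. The relation therefore becomes
\begin{center}
$(1+\delta)\, a(v^2)\ =\ \mu\delta\, v^2.$
\end{center}
Since $\delta\neq -1$, we may divide by $1+\delta$ and obtain $L_a(v^2)=\frac{\mu\delta}{\delta+1}v^2$.

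The assumption $v^2\neq 0$ plays no role in the computation. It only guarantees that $v^2$ is a genuine eigenvector of $L_a$ with eigenvalue $\frac{\mu\delta}{\delta+1}$, rather than the identity holding trivially. The hypothesis $\delta\neq 0$ likewise ensures that the algebra lies in the intended range of the statement.

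I do not expect any real obstacle here. The only care needed is to choose the substitution $y=z=v$, so that the term $x(zy)$ merges with the left-hand side rather than producing a new unknown product.
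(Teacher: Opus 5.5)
Your proposal is correct and is the paper's proof: the paper also substitutes $x=a$, $y=z=v$ into the $\delta$-Zinbiel identity, writes the result as $(av)v=(\delta^{-1}+1)\,av^2$, and compares this with $(av)v=\mu v^2$. Your form $(1+\delta)\,a(v^2)=\mu\delta\, v^2$ simply avoids dividing by $\delta$. As you observe, only $\delta\neq -1$ is actually needed.
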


\begin{proof} 
We have $av =\mu v,$ then  $(av)v = \mu v^2$ and $(av)v = (\delta^{-1}+1) av^2.$
Hence, $L_a(v^2) = \frac{\mu\delta}{\delta+1}v^2.$ \end{proof} 

\begin{proposition} Let $Z$ be a $\delta$-Zinbiel algebra $\big(\delta\neq-1,0\big)$. Then if there exists $a\in Z,$ such that  $L_a$ has an eigenvector $v$ with a non-zero eigenvalue, such that $v^2v\neq0,$ then $\delta=1.$
\end{proposition}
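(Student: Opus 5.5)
The plan is to show that everything in sight is a scalar multiple of the single vector $w:=v^2v\neq 0$. We then compute $L_a(w)$ in two different ways from the defining identity \eqref{dZIdentity} and compare the resulting eigenvalues.

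We will use the $\delta$-Zinbiel identity in two equivalent forms: $x(yz)=\delta\big((xy)z-x(zy)\big)$, and, since $\delta\neq 0$, $(xy)z=\delta^{-1}x(yz)+x(zy)$.

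\emph{Step 1.} Since $v^2v\neq0$, we have $v^2\neq 0$. By Lemma \ref{eigenvalue}, $av^2=\lambda v^2$ with $\lambda=\frac{\mu\delta}{\delta+1}$, and therefore $(av^2)v=\lambda w$.

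\emph{Step 2.} We express $vv^2$ through $w$. Put $x=y=z=v$ in the first form of the identity: $v(vv)=\delta\big((vv)v-v(vv)\big)$. This gives $(1+\delta)\,vv^2=\delta w$, so $vv^2=\frac{\delta}{1+\delta}w$, using $\delta\neq-1$.

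\emph{Step 3.} First expression for $a w$. Apply the first form with $x=a$, $y=v^2$, $z=v$:
$$aw=\delta\big((av^2)v-a(vv^2)\big)=\delta\lambda w-\tfrac{\delta^2}{1+\delta}\,aw .$$
Multiplying by $1+\delta$ and substituting $\lambda$ gives
$$(1+\delta+\delta^2)\,aw=\delta^2\mu\, w.$$
If $1+\delta+\delta^2=0$, this forces $\delta^2\mu w=0$, which contradicts $\mu\neq0$, $\delta\neq 0$, $w\neq0$. Hence $aw=\frac{\delta^2\mu}{1+\delta+\delta^2}w$.

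\emph{Step 4.} Second expression for $a w$. Apply the second form with $x=a$, $y=v$, $z=v^2$:
$$(av)v^2=\delta^{-1}a(vv^2)+a(v^2v).$$
The left side equals $\mu\, vv^2=\frac{\mu\delta}{1+\delta}w$. The right side equals $\frac{1}{1+\delta}aw+aw=\frac{2+\delta}{1+\delta}aw$. Hence
$$(2+\delta)\,aw=\mu\delta\, w.$$
If $\delta=-2$, this gives $\mu\delta w=0$, again a contradiction. Hence $aw=\frac{\mu\delta}{2+\delta}w$.

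\emph{Step 5.} Compare the two scalars. Since $w\neq0$ and $\mu\delta\neq0$, we get $\frac{\delta}{1+\delta+\delta^2}=\frac{1}{2+\delta}$. This simplifies to $2\delta+\delta^2=1+\delta+\delta^2$, i.e.\ $\delta=1$.

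The only delicate point is the bookkeeping in Steps 3 and 4: one must choose the substitutions so that only the two unknowns $aw$ and $w$ appear, and must rule out the degenerate denominators $1+\delta+\delta^2=0$ and $\delta=-2$. Both are handled by the nonvanishing of $\mu$, $\delta$ and $w$, as shown above.
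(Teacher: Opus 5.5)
Your proof is correct and follows the paper's own argument. Both arguments use $vv^2=\frac{\delta}{\delta+1}v^2v$ and Lemma~\ref{eigenvalue} to express $(av)v^2$ and $(av^2)v$ as multiples of $a(v^2v)$, then compare the two resulting scalars to force $\delta=1$. Your explicit exclusion of the degenerate cases $1+\delta+\delta^2=0$ and $\delta=-2$ (equivalently, checking that $a(v^2v)\neq 0$) fills in a step the paper leaves implicit.
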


\begin{proof} 
If $av =\mu v,$ then by Lemma \ref{eigenvalue} we have  $av^2 = \frac{\mu\delta}{\delta+1}v^2.$ It is easy to see $vv^2=   \frac{\delta}{\delta+1}v^2v .$

\begin{longtable}{rcccccccl}
$\frac{\mu\delta}{\delta+1} v^2v$&$=$&$\mu vv^2$&$=$&$ (av)v^2$&$=$&
$ \delta^{-1} a(vv^2)+a(v^2v)$& $=$& 
$ \frac{\delta+2}{\delta+1} a(v^2v) $  \\

\end{longtable}

\begin{longtable}{rcccccccl}
$ \frac{\mu\delta}{\delta+1} v^2v $ &$=$&
$(av^2)v $&$=$&$ \delta^{-1} a(v^2v)+a(vv^2) $&
$=$&$  \frac{\delta^2+\delta+1}{\delta(\delta+1)}a(v^2v).$
\end{longtable}
\noindent Analyzing the last relations, we have 
$\delta+2 \ = \ \delta+1+\delta^{-1},$ which gives $\delta=1.$
\end{proof}


 \subsection{Anti-Zinbiel algebras}\label{antiz}

Thanks to Proposition \ref{nilpdZ}, anti-Zinbiel algebras\footnote{Our definition of anti-Zinbiel algebras introduced via anti-Leibniz algebras is different from anti-Zinbiel algebras introduced via anti-dendriform algebras in \cite{LB23}.}, 
i.e., $\big(-1\big)$-Zinbiel algebras, have a essencial role in all $\delta$-Zinbiel algebras. The present subsection is dedicated to a more detailed study of anti-Zinbiel algebras. 
Let us remember that each Zinbiel algebra is right-commutative. The following statement gives an anti-analog for anti-Zinbiel algebras.

\begin{proposition}
    Each anti-Zinbiel algebra is an anti-right-commutative algebra.
\end{proposition}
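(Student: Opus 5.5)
The plan is to take the defining identity \eqref{dZIdentity} with $\delta=-1$, solve it for $(xy)z$, and symmetrize in $y$ and $z$. No earlier result is needed beyond the definition.

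First, setting $\delta=-1$ in \eqref{dZIdentity} gives
\begin{center}
$x(yz) \ = \ -(xy)z + x(zy).$
\end{center}
Solving for the left-bracketed product, every anti-Zinbiel algebra satisfies
\begin{center}
$(xy)z \ = \ x(zy) - x(yz).$
\end{center}
So $(xy)z$ is expressed as a function of $y,z$ that is antisymmetric under $y\leftrightarrow z$.

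Second, interchanging $y$ and $z$ in the last identity gives
\begin{center}
$(xz)y \ = \ x(yz) - x(zy).$
\end{center}
Adding the two relations, the right-hand sides cancel term by term. This yields $(xy)z+(xz)y=0$, i.e., $(xy)z=-(xz)y$, which is exactly the anti-right-commutative identity.

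I expect no real obstacle here. The only care needed is the sign bookkeeping when substituting $\delta=-1$ into \eqref{dZIdentity}. The argument is the exact anti-analogue of the classical derivation of right-commutativity for Zinbiel algebras, where the same symmetrization produces $(xy)z=(xz)y$.
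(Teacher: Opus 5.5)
Your proof is correct and follows the same route as the paper: set $\delta=-1$ to get $(xy)z = x(zy)-x(yz)$, then note that the right-hand side changes sign under $y\leftrightarrow z$. The paper writes this as the one-line chain $(xy)z = x(zy-yz) = -x(yz-zy) = -(xz)y$.
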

\begin{proof}
    We have 
    $(xy)z \ =\  x(zy-yz) \ = \ -\ x (yz-zy) \ = \ -\  (xz)y.$
\end{proof}

 \begin{lemma}\label{solvableAZ}
     Each finite-dimensional anti-Zinbiel algebra is solvable.
 \end{lemma}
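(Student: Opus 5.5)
The plan is to peel off a commutative quotient and then handle the commutator part separately. Throughout I would use the defining identity in the form $x(yz)=x(zy)-(xy)z$ and the anti-right-commutativity $(xy)z=-(xz)y$ from the preceding proposition.

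\emph{Step 1: the commutative quotient is solvable.} Let $J$ be the two-sided ideal of $Z$ generated by all commutators $[x,y]_1=xy-yx$. Then $Z/J$ is a commutative anti-Zinbiel algebra. In a commutative anti-Zinbiel algebra the identity $x(yz)=x(zy)-(xy)z$ gives $(xy)z=0$. By commutativity we then also get $x(yz)=(yz)x=0$, so $Z/J$ is $2$-step nilpotent and in particular solvable. Since an extension of a solvable ideal by a solvable quotient is solvable, it remains to show that $J$ is solvable.

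\emph{Step 2: understand $J$ more concretely.} First I would compute $z[x,y]_1$ and $[x,y]_1z$. Expanding $z(xy)-z(yx)$ with the identity gives
\[
z[x,y]_1=-(zx)y-z(yx)+z(yx)+\ldots,
\]
and after using anti-right-commutativity this collapses to an expression of the form $-(zx)y+(zy)x=-2(zx)y$. Similarly, $[x,y]_1z=(xy)z-(yx)z$ should collapse to a combination of elements $(ab)c$. The aim is to show that $J\subseteq [Z,Z]_1+Z^{[3}$, where $Z^{[3}=(ZZ)Z$, and that $Z^{[3}$ is an ideal.

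\emph{Step 3: nilpotency of the right-bracketed part.} Next I would show that products of elements of $Z^{[3}$ vanish, or at least lie in $Z^{[4}$. Anti-right-commutativity gives $((xy)z)t=-((xy)t)z$. Combining this with the defining identity applied to $(xy)\big(zt\big)$ should produce a relation of the form $(1+\lambda)((xy)z)t=0$ with $\lambda\neq -1$. This would show that $Z^{[3}$ is a solvable (indeed nilpotent) ideal. Modulo $Z^{[3}$, I then expect $J$ to become the span of commutators, and products of commutators should vanish by the computations in Step 2. This would give $J^{(2)}\subseteq Z^{[3}$, and hence $J$ is solvable.

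\emph{Main obstacle.} The hard part is Step 3: extracting a genuine vanishing of $((xy)z)t$ in degree $4$, or failing that a descending chain $J\supseteq J^{(1)}\supseteq\cdots$ that must stop by finite dimensionality. My first attempt would be a full linear expansion of all degree-$4$ monomials in the free anti-Zinbiel algebra, reducing them to right-normed words via the rewriting used in Proposition~\ref{nilpdZ}, and looking for a symmetric combination that forces $Z^2Z^2=0$ modulo $Z^{[4}$. If that fails, the fallback is to use finite dimensionality directly: show that the right multiplications restricted to $Z^2$ generate a nilpotent associative algebra, for instance via trace arguments as in the classical proof for Zinbiel algebras, and deduce that $Z^2$ is nilpotent.
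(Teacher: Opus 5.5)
Your route differs from the paper's and can be completed, but as written it has two gaps. First, the Step~2 claim that $Z^{[3}=(ZZ)Z$ is an ideal is false, so you cannot pass to a quotient ``modulo $Z^{[3}$'' in Step~3. For example, take the $5$-dimensional algebra with $e_1e_1=e_2$, $e_1e_2=e_3$, $e_1e_3=e_3e_1=e_4$, $e_2e_2=-e_4$, $e_1e_4=e_5$, and all other products zero. It is anti-Zinbiel and $(ZZ)Z=\langle e_4\rangle$, yet $e_1e_4=e_5$. Second, the identity your whole argument rests on, the vanishing of $((xy)z)t$, is only announced (``should produce''), and you list it yourself as the main obstacle. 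Your fallbacks (trace arguments, nilpotency of right multiplications on $Z^2$) are not carried out either. Step~1 is fine. One small slip: $z[x,y]_1=-(zx)y$, not $-2(zx)y$.

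The crux is true and takes one line. Write the defining identity as $(xy)z=x(zy)-x(yz)=-x[y,z]_1$ and use anti-right-commutativity repeatedly:
\[
((xy)z)t=-(xy)[z,t]_1=(x[z,t]_1)y=-((xz)t)y=((xz)y)t=-((xy)z)t .
\]
Hence $((xy)z)t=0$ in characteristic $\neq 2$; this is your guessed relation with $\lambda=1$. Put $Z^{(1)}=ZZ$ and $Z^{(k+1)}=Z^{(k)}Z^{(k)}$. Then $Z^{(2)}\subseteq (ZZ)Z$ and $Z^{(3)}\subseteq ((ZZ)Z)Z=0$. So the commutator ideal $J$, the quotient $Z/J$, and any ideal property of $(ZZ)Z$ are all unnecessary, and finite-dimensionality is never used.

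The paper instead adapts Dzhumadildaev--Tulenbaev. It shows $C_r(a)\subseteq C_r(ab)$ for right annihilators and uses finite dimension to stabilize a chain, producing $a_0$ with $L_{a_0}=0$. It then extracts the proper ideal $Za_0$ (or $\langle a_0\rangle$) and inducts on dimension. That is the general-purpose method imported from the Zinbiel case. Here, your completed identity-based argument is shorter and gives the explicit bound $Z^{(3)}=0$ for all anti-Zinbiel algebras, finite-dimensional or not. It also avoids the paper's eigenvalue step, which asserts a nonzero eigenvalue whenever $L_a\neq 0$. That assertion fails for nonzero nilpotent $L_a$, such as $L_{e_1}$ in the example above.
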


 \begin{proof}
     Our proof is based on the original result by Dzhumadildaev and Tulenbaev  \cite{dk05}, adapted for the anti-Zinbiel case.
Let $Z$ be a finite-dimensional anti-Zinbiel algebra 
and\begin{center}
$C_r\big(a\big) = \big\{x \in Z\ | \ ax\ =\ 0 \big\}.$ \end{center}
     \begin{enumerate}
          \item[Step 1.]
          If $x \in C_r\big(a\big),$ then $(ab)x= -(ax)b=0,$ and $C_r\big(a\big) \subseteq C_r\big(ab\big)\footnote{This step is valid only for Zinbiel and anti-Zinbiel algebras, and it is not valid in the case of an arbitrary $\delta$-Zinbiel algebra $\big(\delta\neq \pm1 \big)$.}.$

     \item[Step 2.] If $a \in Z$ and $L_a\neq 0,$ then
     there exist $b \in Z,$ such that $ab=\lambda b,$ for   $\lambda \in \mathbb C\setminus \{0\}.$ Hence, $0\ =\ (ab)b \ = \ \lambda b^2,$ i.e., there exists $b,$ such that $b^2=0.$

     \item[Step 3.] Take an element $a_1\in Z.$ If there is an element $a_1 \in Z,$ such that $C_r\big(a_1\big) \neq C_r\big(a_1a_2\big),$ then, due to {\bf Step 1}, $C_r\big(a_1\big) \subset C_r\big(a_1a_2\big).$ Now we repeate the procedure for $a_3:$ if 
      $C_r\big(a_1a_2\big) \neq C_r\big((a_1a_2)a_3\big),$ then,  $C_r\big(a_1\big) \subset C\big(a_1a_2\big) \subset C_r\big((a_1a_2)a_3\big),$ and so on.
      Finally,  we obtain that a sequence of nonzero elements $a_1, \ldots, a_k \in Z,$ such that 
      \begin{center}
           $C_r\big(a_1\big) \subset C\big(a_1a_2\big) \subset C_r\big((a_1a_2)a_3\big) \subset
          C_r\big( (\ldots ((a_1a_2)a_3) \ldots )a_k\big) \subseteq Z.$
      \end{center}
      Since $Z$ is finite-dimensional, this sequence terminates at some $k,$ 
      i.e., there is an element $a_0 =(\ldots ((a_1a_2)a_3) \ldots )a_k,$ such that for each $b\in Z,$ 
      we have $C_r\big(a_0\big)\ =\  C_r\big(a_0b \big).$

\item[Step 4.] If $L_{a_0}=0,$ then $C_r(a_0)=Z.$
If  $L_{a_0} \neq 0,$ then, due to {\bf Step 2}, there exists $b\in Z,$ such that $b^2=0$ and $a_0b=\lambda b$ for $\lambda \neq 0.$ But    $b\in C_r(a_0 b) = C_r(a_0),$ hence $a_0b=0$ and $\lambda =0,$ hence we have a contradiction and $C_r(a_0)=Z.$

\item[Step 5.] Let us consider $I=Za_0 = \big\{ xa_0  |  x \in Z \big\}.$ 
We have $(xa_0)y  =  -   (xy)a_0 \in I$ and $a_0y=0$ for each $y\in Z.$
Therefore, 
$y ( x a_0)  =  y(xa_0-a_0x) =  (y x) a_0 \in I.$ 
Hence, $I$ is an ideal of $Z.$

\item[Step 6.]
By taking a basis $\big\{e_1=a_0, e_2, \ldots, e_n \big\}$ in $Z,$ we have that 
$e_1\in C_r(a_0)=Z$ and $e_1e_1=0.$
Hence, $I \ = \ \big\langle e_1e_1=0, e_2e_1, \ldots, e_ne_1 \big\rangle,$ 
i.e.,  ${\rm dim} \ I < {\rm dim } \ Z.$
It gives that if $I\neq 0,$ then $I$ is a proper ideal of $Z.$
If $I=0,$ then we can take $I:=\big\langle a_0 \big\rangle.$

\item[Step 7.]
Obviously, if ${\dim } \ Z=1,$ then $Z$ has trivial multiplication and, in particular, it is solvable.
We will use the induction on ${\rm } \ Z.$
Assume that each $(k-1)$-dimensional anti-Zinbiel algebra is solvable.
By {\bf Step 6}, $Z$ has some proper ideal $I,$ 
such that ${\rm dim} \ I<n$ and  ${\rm dim} \ Z\big/I <n.$ 
By the induction hypothesis 
$I$ and $Z\big/I$ are solvable. Hence, $Z$ is solvable.     \end{enumerate} \end{proof}

 \begin{theorem}\label{naz}
     Each finite-dimensional anti-Zinbiel algebra is nilpotent.
 \end{theorem}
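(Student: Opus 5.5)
We prove that a finite-dimensional anti-Zinbiel algebra $Z$ is nilpotent by combining Lemma \ref{solvableAZ} with the Zinbiel-type fact that in the presence of anti-right-commutativity, powers of $Z$ can be controlled by right-ordered products. This mirrors the classical argument for Zinbiel algebras (Dzhumadildaev--Tulenbaev \cite{dk05}), where solvability is upgraded to nilpotency.

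First we recall the identities available. The anti-Zinbiel identity is $x(yz)=-(xy)z+x(zy)$, and by the previous proposition $Z$ is anti-right-commutative: $(xy)z=-(xz)y$. As in the proof of Proposition \ref{nilpdZ}, every product of $k$ elements is a linear combination of right-normed products $x_1(x_2(\cdots(x_{k-1}x_k)\cdots))$. Hence $Z^k=Z^{k]}$, and it suffices to show that $Z^{k]}=0$ for some $k$, i.e.\ that the associative algebra generated by the left multiplications $L_a$ is nilpotent.

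We then use induction on $\dim Z$, with the ideal from the proof of Lemma \ref{solvableAZ}. By Steps 3--6 there is $a_0\neq 0$ with $L_{a_0}=0$. Take the ideal $I=Za_0$, or $I=\langle a_0\rangle$ if $Za_0=0$; note that $\langle a_0\rangle$ is an ideal in that case because $a_0y=0$ and $ya_0=0$. By induction $Z/I$ is nilpotent, so $Z^{m}\subseteq I$ for some $m$. It remains to show that multiplication by $Z$ acts nilpotently on $I$. For $y\in Z$ we have:
\begin{itemize}
\item $L_y(xa_0)=(yx)a_0$ and $R_y(xa_0)=-(xy)a_0$, both of which lie in $I$;
\item $a_0$ is killed on the left, and $(xa_0)a_0=-(xa_0)a_0$ by anti-right-commutativity, so $(xa_0)a_0=0$.
\end{itemize}
Thus the map $\phi\colon Z\to I$, $x\mapsto xa_0$, intertwines $L_y$ on $Z$ with $L_y$ on $I$. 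Consequently a right-normed product $y_1(y_2(\cdots(y_s(xa_0))))$ equals $\big(y_1(\cdots(y_sx))\big)a_0$, which vanishes once $s+1\ge m'$, where $m'$ is chosen so that $Z^{m'}\subseteq \ker\phi$. Iterating, we get $Z^{N}=0$ for $N$ large.

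The main obstacle is exactly this last bound. Knowing only that $Z/I$ is nilpotent gives $Z^m\subseteq I$, not $Z^m\subseteq\ker\phi$. We therefore plan a double induction: first show that $\ker\phi=C_l(a_0)=\{x\mid xa_0=0\}$ is an ideal containing $I$, using $(xa_0)a_0=0$ and the two intertwining relations above. Then apply the induction hypothesis to $Z/\ker\phi$ when $\ker\phi\neq 0$; when $\ker\phi=0$, $\phi$ is an isomorphism onto $I$, which forces $I=Z$ and contradicts Step 6. This case analysis, together with checking that $\ker\phi$ is really an ideal (left multiplication uses $(yx)a_0=y(xa_0)+\ldots$), is where the computations must be done carefully. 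If it fails, the fallback is an Engel-type argument: show that every $L_a$ is nilpotent (using Step 2, since a nonzero eigenvalue produces the contradiction of Step 4) and apply Jacobson's theorem on weakly closed sets of nilpotent operators to $\{L_a\}$, which is closed under the product $L_aL_b-\ldots$ expressible via $L_{ab}$ by the identity $L_{ab}=-L_aL_b+L_aR_b$ restricted to right-normed words.
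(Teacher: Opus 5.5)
Your argument is correct once a few points are repaired, and it takes a genuinely different route from the paper.

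\textbf{How the routes differ.} The paper first obtains solvability from Lemma \ref{solvableAZ} and then adapts Towers' argument. It shows that $ZB$ is an ideal for every right ideal $B$. Using inclusions of the form $A\subseteq AZ^{(k)}$ that contradict solvability, it forces every minimal ideal $A$ to satisfy $AZ=ZA=0$, so $A$ is one-dimensional and central. Nilpotency then follows by induction, as a central extension of a nilpotent algebra.

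You never use the conclusion of Lemma \ref{solvableAZ}; you only use the element $a_0\neq 0$ with $a_0Z=0$ from inside its proof. Because $L_{a_0}=0$, you have $y(xa_0)=-(yx)a_0$ and $(xa_0)y=-(xy)a_0$. Hence both $Za_0$ and $\ker R_{a_0}$ are ideals, and $R_{a_0}$ anticommutes with every $L_y$. Together with the reduction $Z^k=Z^{k]}$ to right-normed words, induction applied to $Z/Za_0$ and to $Z/\ker R_{a_0}$ gives $Z^m\subseteq Za_0$ and $Z^{m'}\subseteq\ker R_{a_0}$. Therefore $Z^{m+m'-1}=Z^{m+m'-1]}=0$.

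\textbf{What each buys.} Your route is shorter and more elementary, avoids the derived-series bookkeeping, and gives a recursive bound on the nilpotency index. The paper's route follows the standard Towers template: it exhibits the central one-dimensional minimal ideals directly, and the same machinery is reused for the lemma on maximal subalgebras.

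\textbf{Repairs needed.}

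First, the sign. In fact $y(xa_0)=-(yx)a_0+y(a_0x)=-(yx)a_0$, so $y_1(\cdots(y_s(xa_0)))=(-1)^s\big(y_1(\cdots(y_sx))\big)a_0$. This is harmless.

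Second, the case analysis on $\ker\phi$ is unnecessary. Since $a_0a_0=0$ and $(xa_0)a_0=0$, both $a_0$ and $Za_0$ lie in $\ker R_{a_0}$. So the case $\ker\phi=0$ never arises, and you can drop the Engel--Jacobson fallback. As sketched it would not work anyway: the contradiction in Step 4 relies on the stabilization property of the special element $a_0$, not of an arbitrary $a$, and weak closedness of $\{L_a\}$ is not established.

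Third, do not import $a_0$ through Steps 2--4 as written. Step 2 is false: $\mathbf{A}_3$ is anti-Zinbiel, and $L_{e_1}\neq 0$ is nilpotent, so it has no nonzero eigenvalue. Also, the iteration in Step 3 may terminate at $0$. Argue directly instead:
\begin{itemize}
\item Choose $a_0\neq 0$ with $\dim C_r(a_0)$ maximal among nonzero elements.
\item Suppose $a_0b\neq 0$. Step 1 and maximality give $C_r(a_0b)=C_r(a_0)$.
\item Anti-right-commutativity gives $(a_0b)b=-(a_0b)b=0$, so $b\in C_r(a_0b)=C_r(a_0)$, i.e.\ $a_0b=0$, a contradiction.
\end{itemize}
Hence $a_0Z=0$. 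This repair also fixes the same weak point in the paper's own chain of reasoning.
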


 \begin{proof}
     Our proof is based on the original result by Towers   \cite{T23}, adapted for the anti-Zinbiel case.
Let $Z$ be a finite-dimensional anti-Zinbiel algebra, then by Lemma \ref{solvableAZ}, $Z$ is solvable.
     \begin{enumerate}
          \item[Step 1.] Let $B$ be a right ideal of $Z,$ then
\begin{center}
   $ Z\big(ZB\big) \subseteq \big(ZZ\big)B+Z\big(BZ\big) \subseteq ZB$ and 
   $\big(ZB\big)Z \subseteq \big(ZZ\big)B \subseteq ZB,$
\end{center}
i.e., $ZB$ is an ideal of $Z.$

\item[Step 2.] 
Let $A$ be a minimal ideal of $Z$ and $B$ be a minimal right ideal of $Z,$ such that $B \subseteq A.$    
  Clearly, $ZB \subseteq ZA \subseteq A,$ so, due to {\bf Step 1},  $ZB=0$ or $ZB=A.$
\begin{enumerate}
    \item[{\rm (I)}]   If $ZB=0,$ then $B$ is an ideal of $Z,$ so $B=A.$
   \item[{\rm (II)}]   If $ZB=A,$ then  $BZ^2$ is a right ideal inside $B.$
      If $B \subseteq BZ^{(k)}$ we have
   \begin{center}
       $B \subseteq \big(BZ^{(k)}\big)Z^{(k)} \subseteq BZ^{(k+1)}$
   \end{center}
and due to sovability of $Z,$ we have $BZ^2=0.$
   If $BZ=B,$ then \begin{center}
       $B\ =\ \big(BZ\big)Z \subseteq BZ^2=0.$
   \end{center} Hence, $BZ=0.$
    Now, 
    \begin{center}
        $Z\big(Z^2B\big) \subseteq \big(Z^2Z\big)B+Z\big(BZ^2\big) \subseteq Z^2B$
        and $\big(Z^2B\big)Z \subseteq \big(Z^2Z\big)B \subseteq Z^2B,$ 
    \end{center}    so $Z^2B$ is an ideal if $Z$ and $Z^2B$ is inside $A.$
    Hence, 
    \begin{enumerate}
        \item[(II.a)]   if  $Z^2B=A,$ then $B \subseteq Z^2B.$
        But if $B \subseteq Z^{(k)}B,$ then $B \subseteq Z^{(k)}\big(Z^{(k)}B\big) \subseteq Z^{(k+1)}B$
        and due to the solvability of $Z,$ we have a contradiction.
        
 \item[(II.b)] if     $Z^2B=0,$ then 
 \begin{center}
     $AZ\ =\ \big(ZB\big)Z \ \subseteq\  Z^2B\ =\ 0$ and 
 $ZA \ =\  Z\big(ZB\big) \ \subseteq \ Z^2B+Z\big(BZ\big)\ =\ 0,$ 
 \end{center}
i.e., $A \subseteq {\rm Ann} \ Z$ and it follows that ${\rm dim} \ A=1$ and $B=A.$
    
    \end{enumerate}
    
    \end{enumerate}

\item[Step 3.] 
Let $A$ now a minimal ideal of $Z,$
then $AZ$ is a right ideal of $Z$ inside $A.$
Due to the {\bf Step 2},  $AZ\ =\ A$ and 
$A\ =\ AZ\  =\  \big(AZ\big)Z \subseteq AZ^2.$ 
As befoure if $A \subseteq AZ^{(K)},$ then $A \subseteq \big(AZ^{(k)}\big)Z^{(k)} \subseteq AZ^{(k+1)}$ 
and due to the solvability of $Z,$ we have $AZ=0.$
Similarly, $ZA=0$ and, due to the minimality of $A$, we have ${\rm dim} \ A=1.$

\item[Step 4.] 
    Obviously, if ${\dim } \ Z=1,$ then $Z$ has trivial multiplication and, in particular, it is nilpotent.
We will use the induction on ${\rm } \ Z.$
Assume that each $(k-1)$-dimensional anti-Zinbiel algebra is nilpotent.
By {\bf Step 3}, $Z$ has a minimal ideal $A,$ 
such that ${\rm dim} \ A=1$ and  ${\rm dim} \ Z\big/A =n-1.$ 
By the induction hypothesis 
$A$ and $Z\big/A$ are nilpotent and there exist $n,$ such that $Z^n \subseteq A.$ 
Hence, $Z^{n+1} \subseteq Z^nA =0$ and $Z$ is nilpotent.

    \end{enumerate}    
 \end{proof}

\begin{lemma}
    Maximal subalgebras of  anti-Zinbiel algebras are ideals.
\end{lemma}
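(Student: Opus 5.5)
We would deduce the statement from Theorem \ref{naz}, in the spirit of the classical fact that in a nilpotent Lie algebra every maximal subalgebra is an ideal. We work with a finite-dimensional anti-Zinbiel algebra $Z$, as in Theorem \ref{naz}, so $Z$ is nilpotent. Let $M$ be a maximal subalgebra of $Z$. The goal is to show $ZM+MZ\subseteq M$.

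The first step is to show $Z^2\subseteq M$. Suppose not. Then $M+Z^2$ is a subalgebra, since $(M+Z^2)(M+Z^2)\subseteq MM+Z^2\subseteq M+Z^2$. It strictly contains $M$, so by maximality $M+Z^2=Z$. Next we iterate. Using $Z^k=\sum_{i+j=k}Z^iZ^j$, substitute $Z=M+Z^2$ into $Z^2=ZZ$ to get
\begin{center}
$Z^2\subseteq MM+MZ^3+Z^3M+Z^3Z^3\subseteq M+Z^3.$
\end{center}
By induction $Z^2\subseteq M+Z^{k}$ for every $k$. Concretely, each product of $k$ factors from $Z=M+Z^2$ splits into a product of elements of $M$, which lies in $M$ because $M$ is a subalgebra, plus terms of degree at least $k+1$. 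Nilpotency gives $Z^k=0$ for large $k$, so $Z^2\subseteq M$, which contradicts the assumption.

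Once $Z^2\subseteq M$, the conclusion is immediate: $ZM\subseteq Z^2\subseteq M$ and $MZ\subseteq Z^2\subseteq M$, so $M$ is a two-sided ideal.

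The main obstacle is bookkeeping the induction $Z\subseteq M+Z^k$ in a nonassociative setting. I would phrase it as follows. Let $M_k$ be the subalgebra generated by $M$ together with $Z^k$. If $Z^2\not\subseteq M$, then $M_2=Z$. Using $Z^aZ^b\subseteq Z^{a+b}$ and bilinearity, one checks $Z=M+Z^k$ for all $k$ by induction: writing $Z^k=\sum Z^iZ^j$ with each factor expanded as an element of $M$ plus higher-degree terms gives $Z^k\subseteq M+Z^{k+1}$, since any term involving a factor of degree $\ge 2$ raises the total degree. Taking $k$ equal to the nilpotency index yields $Z=M$, contradicting the properness of $M$. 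No anti-Zinbiel identities beyond nilpotency (Theorem \ref{naz}) are needed. If infinite-dimensional algebras were intended, the statement would need nilpotency as a hypothesis, and the same argument would apply.
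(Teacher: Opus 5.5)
Your argument is correct and matches the paper's route: the paper only points to Towers' proof of the Zinbiel analogue, which, like yours, uses nilpotency (Theorem \ref{naz}) to show that a maximal subalgebra $M$ contains $Z^2$ (otherwise $Z=M+Z^2=M+Z^k$ for every $k$, forcing $Z=M$), so $M$ is an ideal. The only slip is a typo in your first display: the middle terms should read $MZ^2+Z^2M+Z^2Z^2$, and the inclusion in $M+Z^3$ still holds.
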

\begin{proof}
    It is similar to \cite[proof of the Corollary 2.5]{T23}.
\end{proof}

\section{Symmetric $\delta$-Leibniz admissible algebras}

\begin{definition}
   An algebra ${\rm A}$ is called a symmetric $\delta$-Leibniz admissible algebra
    if $\big({\rm A}, \ [\cdot,\cdot]_{\delta} \big)$ is a symmetric $\delta$-Leibniz algebra.  
 
\end{definition}

Let $\mathcal{S}LA_{\delta}$ be a variety of symmetric $\delta$-Leibniz admissible  algebras  for some $\delta$. Then 
$\mathcal{S}LA_1$ is the variety of Lie admissible  algebras,  
$\mathcal{S}LA_0$ is the variety of symmetric Leibniz    algebras, and 
$\mathcal{S}LA_{-1}$ is the variety of Jacobi-Jordan admissible  algebras.
Hence, the varieties of symmetric $\delta$-Leibniz admissible algebras give a type of continuous deformation  of the variety of Lie admissible algebras to the variety of Jacobi-Jordan admissible algebras
via symmetric Leibniz algebras and vice versa.

\begin{theorem}
    Let ${\rm A}$ be a symmetric $\delta$-Leibniz admissible algebra $\big( \delta \neq 0\big),$ 
    then ${\rm A}$ satisfies the following identities: 
	\begin{equation}\label{idsdladd1}	
	(x,y,z)_{\delta} +\delta^2(z,x,y)_{\delta^{-1}}+\delta^2(y,z,x)_{\delta} \ =\  
    \delta\big( (y,x,z)_{\delta}+(x,z,y)_{\delta}+\delta^2 (z,y,x)_{\delta^{-1}} \big),	 
		\end{equation}
	\begin{equation}\label{idsdladd2}	
	(x,y,z)_\delta+(z,x,y)_{\delta^{-1}} +\delta^2(y,z,x)_{\delta^{-1}} \ =\  
    \delta\big( (y,x,z)_{\delta}+(x,z,y)_{\delta^{-1}}+(z,y,x)_{\delta^{-1}} \big).	 
		\end{equation}
  
\end{theorem}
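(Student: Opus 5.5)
The argument is a direct expansion of the two defining identities of a symmetric $\delta$-Leibniz algebra for the product $x\circ y:=[x,y]_\delta=xy-\delta yx$, written in terms of the original multiplication and regrouped into $\delta$- and $\delta^{-1}$-associators.

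The right $\delta$-Leibniz identity is
\begin{center}
$(x\circ y)\circ z=\delta\big((x\circ z)\circ y+x\circ(y\circ z)\big).$
\end{center}
The left $\delta$-Leibniz identity is the mirror one:
\begin{center}
$x\circ(y\circ z)=\delta\big((x\circ y)\circ z+y\circ(x\circ z)\big).$
\end{center}
I expect the right identity to give \eqref{idsdladd1} and the left one to give \eqref{idsdladd2}.

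First, expand $(x\circ y)\circ z=(xy)z-\delta(yx)z-\delta z(xy)+\delta^2 z(yx)$. In the same way, expand $(x\circ z)\circ y$ and
\begin{center}
$x\circ(y\circ z)=x(yz)-\delta x(zy)-\delta(yz)x+\delta^2(zy)x.$
\end{center}
Each side of the right identity then becomes a combination of the twelve monomials $(ab)c$ and $a(bc)$, where $(a,b,c)$ runs over the permutations of $(x,y,z)$.

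Second, pair the monomials with the same ordered triple $(a,b,c)$ into associators. There are two ways to do this:
\begin{center}
$(ab)c-\delta a(bc)=(a,b,c)_\delta$, \quad and \quad $a(bc)-\delta (ab)c=-\delta\,(a,b,c)_{\delta^{-1}}$.
\end{center}
For instance, $(xy)z$ from the left side pairs with $-\delta x(yz)$ coming from $\delta\, x\circ(y\circ z)$, giving $(x,y,z)_\delta$. Likewise $\delta^2 z(yx)$ pairs with $-\delta^3(zy)x$, giving $\delta^2\big(z(yx)-\delta(zy)x\big)=-\delta^3(z,y,x)_{\delta^{-1}}$.

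Doing this for all six permutations and moving terms across should reproduce \eqref{idsdladd1}, possibly after dividing by a power of $\delta$; this is where $\delta\neq0$ is used. The same bookkeeping applied to the left identity should give \eqref{idsdladd2}.

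The only real obstacle is this bookkeeping. For each permutation one has to choose between the $\delta$- and the $\delta^{-1}$-associator so that the result matches the stated form, and one has to track the signs and powers of $\delta$ correctly. I would record the coefficients in a small table with one row per permutation, showing the coefficient of $(ab)c$ and of $a(bc)$. Then I would check that each row has ratio $-\delta$ or $-\delta^{-1}$ between these two coefficients, so that it is a multiple of an associator. If some row fails to factor, I would recheck which of the two defining identities it belongs to.
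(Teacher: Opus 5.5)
Your proposal is correct and is essentially the paper's proof: the paper expands the right identity and the left identity, the latter written as $\big[z,[x,y]_\delta\big]_\delta=\delta\big(\big[[z,x]_\delta,y\big]_\delta+\big[x,[z,y]_\delta\big]_\delta\big)$, in the original product, and regroups the twelve monomials into $\delta$- and $\delta^{-1}$-associators exactly as you describe. The only cosmetic difference is your labelling $x\circ(y\circ z)=\delta\big((x\circ y)\circ z+y\circ(x\circ z)\big)$: with it, \eqref{idsdladd2} appears after replacing $x,y,z$ by $z,x,y$ and dividing by $\delta$, whereas \eqref{idsdladd1} comes out directly with no division.
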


\begin{proof}
   As $\big({\rm A}, \  [\cdot, \cdot]_\delta \big)$ is a symmetric $\delta$-Leibniz algebra, then it   satisfies:

   \begin{longtable}{lcl}

$\big[[x,y]_\delta,z \big]_\delta$ & $=$ & $\delta\big( \big[[x,z]_\delta,y\big]_\delta + \big[x,[y,z]_\delta\big]_\delta\big),$\\

$\big[z,[x,y]_\delta\big]_\delta$ & $=$ & $\delta\big( \big[[z,x]_\delta,y\big]_\delta + \big[x,[z,y]_\delta\big]_\delta\big).$

   \end{longtable}\noindent
The last, consequently, gives 
\begin{longtable}{lcl}
  $(xy)z-\delta z(xy)-\delta (yx)z+\delta^2 z(yx)$ &$=$&$\delta (xz)y -\delta^2 y(xz) -\delta^2 (zx)y+\delta^3 y(zx)+$\\
  &&$\delta x(yz)-\delta^2 (yz)x -\delta^2 x(zy)+\delta^3 (zy)x,$\\

  $z(xy)-\delta (xy)z-\delta z(yx)+\delta^2 (yx)z$ &$=$&$\delta (zx)y -\delta^2 y(zx) -\delta^2 (xz)y+\delta^3 y(xz)+$\\
  &&$\delta x(zy)-\delta^2 (zy)x -\delta^2 x(yz)+\delta^3 (yz)x.$  
\end{longtable}\noindent 
Hence, 

\begin{longtable}{rcl}
$(x,y,z)_{\delta} +\delta^2(z,x,y)_{\delta^{-1}}+\delta^2(y,z,x)_{\delta} $&$ =$&$  
    \delta\big( (y,x,z)_{\delta}+(x,z,y)_{\delta}+\delta^2 (z,y,x)_{\delta^{-1}} \big),$\\	 
	$(x,y,z)_\delta+(z,x,y)_{\delta^{-1}} +\delta^2(y,z,x)_{\delta^{-1}} $&$=$&$  
    \delta\big( (y,x,z)_{\delta}+(z,y,x)_{\delta^{-1}}+(x,z,y)_{\delta^{-1}} \big).$	 
\end{longtable}
\end{proof}

It is known that symmetric $\big(-1\big)$-Leibniz admissible $\big(=$ Jacobi-Jordan admissible$\big)$ algebras  satisfy the identity $x^2x=-xx^2.$
In the case of symmetric $1$-Leibniz admissible $\big(=$ Lie admissible$\big)$ algebras, 
power-associative algebras also play an important role \cite{B84}.

\begin{theorem}\label{nilds}
     Let ${\rm A}$ be a symmetric $\delta$-Leibniz admissible algebra. 
     Then
     \begin{enumerate}
         \item[$({\rm A})$] if $\delta\notin \big\{ \pm 1,\frac12 \big\}$ then ${\rm A}$ is a nilalgebra with nilindex $3;$  
         \item[$({\rm B})$] if $\delta= \frac 12,$ then ${\rm A}$ is a  power-associative algebra. 
     \end{enumerate}
 \end{theorem}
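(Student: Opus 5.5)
The plan is to apply Theorem~\ref{symmetric} to the $\delta$-commutator algebra. Write $x\circ y:=[x,y]_\delta=xy-\delta yx$. By definition, $({\rm A},\circ)$ is a symmetric $\delta$-Leibniz algebra, so Theorem~\ref{symmetric} gives information about $\circ$-powers of a single element $x$. We then translate this back into the original product, using only the one-generated expressions $x^2$, $x^2x$, $xx^2$, $x^2x^2$, $x(xx^2)$, and so on. The key observation is $x\circ x=(1-\delta)x^2$.

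For part (A), first note $\delta\neq \frac12$, so Theorem~\ref{symmetric}(A) gives $(x\circ x)\circ x=x\circ(x\circ x)=0$, and also $(x\circ x)\circ(x\circ x)=0$ from its proof. Expanding gives
\begin{center}
$(1-\delta)\big(x^2x-\delta xx^2\big)=0,\qquad (1-\delta)\big(xx^2-\delta x^2x\big)=0.$
\end{center}
Since $\delta\neq1$, we have $x^2x=\delta xx^2$ and $xx^2=\delta x^2x$. Hence $(1-\delta^2)x^2x=0$, and $\delta\neq\pm1$ gives $x^2x=xx^2=0$. Similarly, $(x\circ x)\circ(x\circ x)=(1-\delta)^3x^2x^2=0$ gives $x^2x^2=0$. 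So every cube of $x$ vanishes (for both bracketings), which is the nil-of-index-$3$ statement. The case $\delta=0$ is harmless here: there $\circ$ is the original product, and the same computation applies.

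For part (B), take $\delta=\frac12$, so $x\circ x=\frac12x^2$. Theorem~\ref{symmetric}(B) (or its proof) gives $(x\circ x)\circ x=x\circ(x\circ x)$. Expanding,
\begin{center}
$\tfrac12\big(x^2x-\tfrac12xx^2\big)=\tfrac12\big(xx^2-\tfrac12x^2x\big),$
\end{center}
hence $\tfrac34\big(x^2x-xx^2\big)=0$, and so $x^2x=xx^2=:x^3$. Then, as in the proof of Theorem~\ref{symmetric}(B), we want $x^2x^2=x^3x$ in ${\rm A}$, so that \cite[Lemma 3]{Albert} applies and yields power-associativity.

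The main obstacle is this last step. The $\circ$-relation $(x\circ x)\circ(x\circ x)=(x\circ x\circ x)\circ x$ (and its mirror with $x\circ(x\circ x\circ x)$) expands into a linear combination of $x^2x^2$, $x^3x$ and $xx^3$, with $x\circ x\circ x=\frac14x^3$. We also need the defining symmetric identities applied with entries $x, x, x^2$ and $x, x^2, x$. Concretely, we would use identities \eqref{idsdladd1}--\eqref{idsdladd2} at $\delta=\frac12$, specialised to these substitutions. This should give at least two independent linear relations among $x^2x^2$, $x^3x$ and $xx^3$, which we solve to obtain $x^2x^2=x^3x$ (and $=xx^3$).

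If the linear system turns out degenerate, the fallback is to invoke the symmetric-$\frac12$ conclusion $(x\circ x)\circ(x\circ x)=(x\circ x\circ x)\circ x=x\circ(x\circ x\circ x)$ directly. Combined with $x^2x=xx^2$, these two equations form a $2\times2$ system in $x^3x-x^2x^2$ and $xx^3-x^2x^2$, whose determinant we check is nonzero.
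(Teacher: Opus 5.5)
Your proposal is correct and follows the paper's proof. Part (A) is the same expansion of $(x\circ x)\circ x$, $x\circ(x\circ x)$ and $(x\circ x)\circ(x\circ x)$; since you also get $x^2x^2=0$, a one-line induction on degree can replace the appeal to Albert. Your fallback in (B) is exactly the paper's use of $(x\circ x)\circ(x\circ x)=(x\circ x\circ x)\circ x=x\circ(x\circ x\circ x)$, and the computation you left open does close: with $u=x^3x-x^2x^2$ and $v=xx^3-x^2x^2$ the system is $2u-v=0$, $2v-u=0$, with determinant $3\neq 0$. Your primary route also works, since the right identity at $(x,x,x^2)$ and $(x,x^2,x)$ gives $2x^2x^2=x^3x+xx^3$ and $x^2x^2=5x^3x-4xx^3$.
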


 \begin{proof}  
  Let ${\rm A}$ be a symmetric $\delta$-Leibniz admissible algebra and   $\delta \notin \big\{  \pm 1,\frac{1}{2} \big\}$. Then, according to Theorem \ref{symmetric}, we have 
\begin{center}
    $\big[[x,x]_\delta,[x,x]_\delta\big]_\delta\ =\ 
    \big[[x,x]_\delta,x \big]_\delta \ =\ \big[x,[x,x]_\delta\big]_\delta \ =\ 0.$
\end{center}
From this, we obtain the following
\begin{longtable}{lclclcl}
$(1-\delta)^3x^2x^2$&$=$& $(1-\delta)(x^2x-\delta xx^2)$&$=$&$ (1-\delta)(xx^2-\delta x^2x)$&$=$&$0.$	 
\end{longtable}

\noindent Then 
    $ x^2x^2 \ = \ x^2x\ =\ xx^2\ =\ 0.$ 
    Hence, due to  \cite[Lemma 3]{Albert}, we have  $x^n=0$ for all $n>2.$ 

\medskip 

         Let      ${\rm A}$ be a symmetric $\frac{1}{2}$-Leibniz admissible algebra.
         Then, according to Theorem \ref{symmetric}, we have 
$\big[[x,x]_\frac{1}{2},\ x\big]_\frac{1}{2}\ =\ \big[x,\ [x,x]_\frac{1}{2}\big]_\frac{1}{2}.$
From this, we obtain that $x^2x=xx^2$, and now we  consider 
\begin{longtable}{lclcl}
$\big[[x,x]_\frac{1}{2},\ [x,x]_\frac{1}{2} \big]_\frac{1}{2}$&$=
$&$\big[[x,[x,x]_\frac{1}{2}]_\frac{1}{2},\ x\big]_\frac{1}{2}$&
$=$&$\big[x,\ [[x,x]_\frac{1}{2},x]_\frac{1}{2}\big]_\frac{1}{2}.$
\end{longtable}
Hence, we obtain the following
\begin{longtable}{lclcl}
$\frac{1}{8}x^2x^2$&$=$&
$\frac{1}{4}\big(x^3x-\frac{1}{2}xx^3\big)$&$=$&
$\frac{1}{4}\big(xx^3-\frac{1}{2}x^3x\big).$
\end{longtable}
Therefore, we obtain 
$x^2x^2\ =\ x^3x\ =\ xx^3.$
Then, the   algebra ${\rm A}$ satisfies the conditions from \cite[Lemma 3]{Albert} and hence, ${\rm A}$ is power-associative.  
 
\end{proof}

\subsection{Algebras of $\delta$-biderivation-type}

\begin{definition}
	{  An algebra ${\rm A}$ is called an algebra of $\delta$-biderivation-type $\big(\delta{\rm BD}$-algebra$\big)$ if it satisfies:
		\begin{equation}\label{Def1}	
		x [y, z]_{\delta}= \delta \big( [x y,  z]_{\delta}+[y,x z]_{\delta} \big),
		\end{equation}
		\begin{equation}\label{Def2}
		[y, z]_{\delta} x=\delta \big( [y x,  z]_{\delta}+[y, z x]_{\delta}\big).
		\end{equation}
  Which equivalent to say that, for any $x$ element of ${\rm A},$ 
   ${\rm L}_{x}$ and ${\rm R}_{x}$ are $\delta$-derivations of the algebra $({\rm A}, \ [\cdot,\cdot]_{\delta})$, where ${\rm L}_{x}$ and ${\rm R}_{x}$ denote, respectively, the left and right multiplications by the element $x \in {\rm A}$ in the algebra ${\rm A}$.  
				
	}
\end{definition}	

Let $\mathcal{BD}_{\delta}$ be a variety of algebras of  $\delta$-biderivation-type for  some $\delta$. Then 
$\mathcal{BD}_1$ is the variety of biderivation-type  algebras,  
$\mathcal{BD}_{-1}$ is the variety of anti-biderivation-type    algebras.
Hence, the varieties of algebras of  $\delta$-biderivation-type give a type of continuous deformation  of the variety of algebras of  biderivation-type to the variety of algebras of  anti-biderivation-type
 and vice versa.

\begin{proposition}
    
    Let ${\rm A}$ be a $\delta{\rm BD}$-algebra, 
    then ${\rm A}$ is a symmetric $\delta$-Leibniz admissible algebra.
 
\end{proposition}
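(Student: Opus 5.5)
Write $[x,y]:=[x,y]_\delta = xy-\delta yx$. We must show that $({\rm A},[\cdot,\cdot])$ satisfies both the right identity
\[
\big[[x,y],z\big]=\delta\big(\big[[x,z],y\big]+\big[x,[y,z]\big]\big)
\]
and the left identity
\[
\big[z,[x,y]\big]=\delta\big(\big[[z,x],y\big]+\big[x,[z,y]\big]\big),
\]
which is the notion of symmetric $\delta$-Leibniz algebra from Theorem \ref{symmetric}.

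The approach is to write each bracket of brackets as a combination of products $u[v,w]$ and $[v,w]u$, and then apply \eqref{Def1} and \eqref{Def2}. By definition,
\[
\big[[x,y],z\big]=[x,y]z-\delta\, z[x,y].
\]
Apply \eqref{Def2} to $[x,y]z$ (with $y\to x$, $z\to y$, $x\to z$) and \eqref{Def1} to $z[x,y]$. This gives
\[
\big[[x,y],z\big]=\delta\big([xz,y]+[x,yz]\big)-\delta^2\big([zx,y]+[x,zy]\big).
\]
Regroup by linearity of the bracket in each argument:
\[
\big[[x,y],z\big]=\delta\Big(\big[xz-\delta zx,\;y\big]+\big[x,\;yz-\delta zy\big]\Big)=\delta\Big(\big[[x,z],y\big]+\big[x,[y,z]\big]\Big).
\]
This is exactly the right $\delta$-Leibniz identity, obtained purely formally from the two defining identities.

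For the left identity we proceed the same way, starting from
\[
\big[z,[x,y]\big]=z[x,y]-\delta\,[x,y]z.
\]
Expanding with \eqref{Def1} and \eqref{Def2} gives
\[
\delta\big([zx,y]+[x,zy]\big)-\delta^2\big([xz,y]+[x,yz]\big)=\delta\Big(\big[zx-\delta xz,\;y\big]+\big[x,\;zy-\delta yz\big]\Big)=\delta\Big(\big[[z,x],y\big]+\big[x,[z,y]\big]\Big),
\]
which is the left identity.

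The only real care point, and the step most likely to go wrong, is the bookkeeping of variable substitutions in \eqref{Def1} and \eqref{Def2}. In particular, the roles of the factors must come out as $zx$ versus $xz$ so that the regrouping produces $[z,x]$ in the left case and $[x,z]$ in the right case. Beyond this, the argument is a two-line manipulation requiring no restriction on $\delta$.
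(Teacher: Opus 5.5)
Your proof is correct and is essentially the paper's argument: the paper forms \eqref{Def1}$-\delta\cdot$\eqref{Def2} and \eqref{Def2}$-\delta\cdot$\eqref{Def1}, which is exactly your expansion of $\big[[x,y]_\delta,z\big]_\delta$ and $\big[z,[x,y]_\delta\big]_\delta$ followed by regrouping into $\delta$-commutators. Your variable substitutions check out, and, as you note, no restriction on $\delta$ is needed.
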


\begin{proof}  Let ${\rm A}$ be a $\delta{\rm BD}$-algebra, then identities \eqref{Def1} and \eqref{Def2} hold. By considering the expressions $\eqref{Def1} - \delta\cdot \eqref{Def2}$ and $\eqref{Def2} - \delta\cdot\eqref{Def1}$, we obtain the following identities, respectively:

\begin{longtable}{lcl}
$\big[x,[y,z]_\delta\big]_\delta$ & $=$ & $\delta\big( \big[[x,y]_\delta,z\big]_\delta + \big[y,[x,z]_\delta\big]_\delta\big),$\\

$\big[[y,z]_\delta,x\big]_\delta$ & $=$ & $\delta\big( \big[[y,x]_\delta,z\big]_\delta + \big[y,[z,x]_\delta\big]_\delta\big).$
   \end{longtable}
\end{proof}

\begin{proposition} Every $\delta BD$-algebra $\big(\delta\neq1 \big)$ is a nilalgebra with nilindex $3.$ In particular, each $\delta BD$-algebra is a power-associative algebra.
\end{proposition}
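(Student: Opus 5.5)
The plan is to specialize the two defining identities \eqref{Def1} and \eqref{Def2} to powers of a single element. This should give $x^2x=xx^2=x^2x^2=0$, and then \cite[Lemma 3]{Albert} finishes the argument, exactly as in the proofs of Theorems \ref{symmetric} and \ref{nilds}. I would not route the argument through the previous proposition combined with Theorem \ref{nilds}. That theorem excludes $\delta\in\{\pm1,\frac12\}$, whereas the direct computation also covers $\delta=-1$.

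\textbf{Step 1 (cubes).} Put $y=z=x$ in \eqref{Def1}. Since $[x,x]_\delta=(1-\delta)x^2$, the left side is $(1-\delta)\,xx^2$. The right side is
\[
\delta\big([x^2,x]_\delta+[x,x^2]_\delta\big)=\delta(1-\delta)\big(x^2x+xx^2\big).
\]
Dividing by $1-\delta\neq0$ gives $xx^2=\delta(x^2x+xx^2)$. The same substitution in \eqref{Def2} gives $x^2x=\delta(x^2x+xx^2)$. Hence $x^2x=xx^2=:x^3$ and $x^3=2\delta x^3$. So for $\delta\neq\frac12$ we get $x^3=0$.

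\textbf{Step 2 (fourth powers).} Now substitute $x\mapsto x^2$ and $y=z=x$ in \eqref{Def1}. The left side becomes $(1-\delta)x^2x^2$. The right side is a combination of $[x^2x,x]_\delta$ and $[x,x^2x]_\delta$, and both vanish by Step 1. Hence $x^2x^2=0$. Together with $x^2x=xx^2=0$, this puts us exactly in the hypotheses of \cite[Lemma 3]{Albert}, which yields $x^n=0$ for all $n>2$. In particular the algebra is power-associative, since all powers of degree at least $3$ vanish.

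\textbf{Main obstacle: the case $\delta=\frac12$.} Here Step 1 gives only $x^2x=xx^2$ and no nilpotency. I would first try to obtain more relations by substituting $(y,z)=(x,x^2)$ and $(x^2,x)$ in \eqref{Def1} and \eqref{Def2}, aiming at least for $x^2x^2=x^3x=xx^3$, so that power-associativity still follows from \cite{Albert} as in Theorem \ref{nilds}(B).

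However, nilness genuinely fails at $\delta=\frac12$. Take the one-dimensional algebra with $e_1e_1=e_1$, so that $[e_1,e_1]_{1/2}=\frac12 e_1$. In \eqref{Def1}, the left side is $e_1\cdot\frac12e_1=\frac12e_1$ and the right side is $\frac12\big(\frac12e_1+\frac12e_1\big)=\frac12 e_1$. Identity \eqref{Def2} is checked the same way. So this algebra is a $\frac12$BD-algebra that is not nil.

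The nil statement should therefore be read with $\delta\notin\{1,\frac12\}$. For $\delta=\frac12$ I would only claim power-associativity, which the computation above supports.
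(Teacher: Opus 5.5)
Your proof for $\delta\neq\frac12$ is essentially the paper's proof. Put $x=y=z$ in \eqref{Def1} and \eqref{Def2}, then substitute $(x^2,x,x)$ to get $(1-\delta)x^2x^2=0$, and finish with \cite[Lemma 3]{Albert}.

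Your caveat at $\delta=\frac12$ is correct and exposes an error in the paper. Its step ``from this we have $x^2x=xx^2=x^3=0$'' silently cancels the factor $1-2\delta$ in $(1-2\delta)x^3=0$. Your one-dimensional counterexample checks out. In fact every commutative associative algebra is a $\frac12$BD-algebra. Moreover, the paper's own Theorem~\ref{cld23} lists ${\bf D}_2(0,0)$, ${\bf D}_2(1,1)$ and ${\bf E}_1(0,0,0,0)$ as $\frac12$BD-algebras, and each contains the idempotent $e_1$. So the nil claim needs $\delta\notin\{1,\frac12\}$, as you say.

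What your write-up leaves unproved is power-associativity at $\delta=\frac12$. The computation you actually carried out gives only $x^2x=xx^2$. The substitutions you propose do close this case quickly. Write $c=x^2x=xx^2$ and $[\cdot,\cdot]=[\cdot,\cdot]_{1/2}$.
\begin{itemize}
\item Taking $(x,y,z)=(x,x,x^2)$ in \eqref{Def1} gives $\frac12xc=\frac14x^2x^2+\frac12xc-\frac14cx$, so $x^2x^2=cx$.
\item Taking $(x,y,z)=(x,x^2,x)$ in \eqref{Def1} gives $\frac34xc=\frac12cx+\frac14x^2x^2$, hence $xc=cx$.
\end{itemize}
Therefore $x^2x^2=x^3x=xx^3$, and \cite[Lemma 3]{Albert} applies.

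Alternatively, the preceding proposition together with Theorem~\ref{nilds}(B) settles this case directly. Part (B) explicitly covers $\delta=\frac12$; only part (A) excludes it. So your stated reason for avoiding that route does not apply here.
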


\begin{proof} Taking $x = y = z$ in the identities \eqref{Def1} and \eqref{Def2}, we have
$$\begin{array}{lcl}
(1-\delta)xx^2 & = & \delta(1-\delta)\big(x^2x + xx^2\big),\\[1mm]
(1-\delta)x^2x & = & \delta(1-\delta)\big(x^2x + xx^2\big), \end{array}$$
or $$\begin{array}{lcl}
xx^2 & = & \delta\big(x^2x + xx^2\big),\\[1mm]
x^2x & = & \delta\big(x^2x + xx^2\big), \end{array}$$ 
and from this we have $x^2x=xx^2=x^3=0.$ Hence, taking
elements $x^2$, $x$ and $x$ in the identities \eqref{Def1} and \eqref{Def2}, we have
$$(1-\delta)x^2x^2  = \delta(1-\delta)\big(x^3x + xx^3\big)=0,$$
i.e. $x^4 =x^3x=x^2x^2=xx^3=0.$ Then, it follows from \cite[Lemma 3]{Albert}, that ${\rm A}$ is power-associative
\end{proof}

 \subsection{Examples of $2$- and $3$-dimensional algebras.}

The classification of two-dimensional algebras over an algebraically closed field was established in \cite{kv16}. We present a classification of two-dimensional symmetric $\delta$-Leibniz admissible algebras and algebras of $\delta$-biderivation-type, for which direct computations can be achieved.

\begin{theorem}\label{cld23}
Let $\bf S$ be a nontrivial $2$-dimensional 
symmetric $\delta$-Leibniz admissible algebra or   algebra of $\delta$-biderivation-type, then it is isomorphic to one of the algebras listed below:

\begin{longtable}{|l|c|c|l|l|l|l|}

 \multicolumn{1}{l|}{}&${\mathcal S}LA_\delta $ &$\delta {\rm BD}$    \\

\hline
  ${\bf A}_1(\frac 12)$ & $-$ &$1$ & $e_1e_1= e_1+e_2$&  
 \multicolumn{2}{|c|}{$e_1e_2\ =\  \frac 12 e_2\ =\  e_2e_1$}& $e_2e_2=0$ \\
\hline

\hline
  ${\bf A}_2$ & $-1$& $1$ & 
$e_1e_1=  e_2$&  
 $e_1e_2=   e_2$ & $e_2e_1=-  e_2 $& $e_2e_2=0$ \\
\hline
 
\hline
  ${\bf A}_3$ & $\delta$& $\delta$ & 
$e_1e_1=  e_2$&  
\multicolumn{2}{|c|}{ $e_1e_2\ =\   0\ = \ e_2e_1$}& $e_2e_2=0$ \\
\hline

 \hline
  ${\bf A}_4(0)$ & $-1$& $-$ & 
$e_1e_1=  e_2$&  
 $e_1e_2=   e_2$ & $e_2e_1=-  e_1 $& $e_2e_2=0$ \\
\hline

 \hline
  ${\bf B}_2(\alpha)$ & $-$& $1$ & 
$e_1e_1= 0$&  
 $e_1e_2= (1-\alpha) e_1$ & $e_2e_1=  \alpha   e_1 $& $e_2e_2=0$ \\
\hline

   \hline
 ${\bf B}_3 $ & $-1$& $\pm 1$ & 
$e_1e_1= 0$&  
 $e_1e_2=  e_2$ & $e_2e_1=  -   e_2 $& $e_2e_2=0$ \\
\hline

   \hline
 ${\bf C}(\frac 12,0) $ & $-$& $1$ & 
$e_1e_1= e_2$&  
 $e_1e_2= \frac 12 e_1$ & $e_2e_1=  \frac 12   e_2 $& $e_2e_2=e_2$ \\
\hline

   \hline
  ${\bf D}_1(\alpha,0) $ & $-$& $1$ & 
$e_1e_1= e_1$&  
 $e_1e_2= (1-\alpha) e_1$ & $e_2e_1=  \alpha   e_1 $& $e_2e_2=0$ \\
\hline

   \hline
  ${\bf D}_2(\alpha,\alpha) $ & $-$& $1$ & 
$e_1e_1= e_1$&  
\multicolumn{2}{|c|}{ $e_1e_2\ =\  \alpha e_2 \ = \ e_2e_1$}& $e_2e_2=0$ \\
\hline

   \hline
 ${\bf D}_2(1,1) $ & $\frac 12$& $\frac 12$ & 
$e_1e_1= e_1$&  
\multicolumn{2}{|c|}{ $e_1e_2\ =\  e_2\ =\  e_2e_1$}& $e_2e_2=0$ \\
\hline

 \hline
  ${\bf D}_2(0,0) $ & $\frac 12$& $\frac 12$ & 
$e_1e_1= e_1$&  
\multicolumn{2}{|c|}{ $e_1e_2\ =\  0\ =\  e_2e_1$}& $e_2e_2=0$ \\
\hline

 \hline
  ${\bf D}_3(0,0) $ & $-$& $1$ & 
$e_1e_1= e_1$&  
$e_1e_2 = e_1$ &$ e_2e_1=-e_1$ & $e_2e_2=0$ \\
\hline

 \hline
  ${\bf E}_1(\alpha,\beta,\alpha,\beta)  $ & $-$& $1$ & 
$e_1e_1= e_1$&  
\multicolumn{2}{|c|}{$e_1e_2= \alpha  e_1+\beta e_2=e_2e_1$}& $e_2e_2=e_2$ \\
\hline


 \hline
  ${\bf E}_1(0,0,0,0)  $ & $\frac 12$& $\frac 12$ & 
$e_1e_1= e_1$&  
 \multicolumn{2}{|c|}{$e_1e_2 \ =  \ 0\ = \ e_2e_1$}& $e_2e_2=e_2$ \\
\hline

\end{longtable}\noindent 
Here: 
notations in  the first column are  given from {\rm \cite{kv16}}; 
the number $\gamma$ in the second column means that it is a symmetric $\gamma$-Leibniz admissible algebra;
the number $\gamma$ in the third column means that it is an algebra of $\gamma$-biderivation-type; 
we also are not interested in symmetric $1$-Leibniz admissible algebras, 
because each $2$-dimensional algebra is it.
\end{theorem}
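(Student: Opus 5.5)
The plan is to work through the classification of nontrivial $2$-dimensional algebras over $\mathbb C$ from \cite{kv16}. Each such algebra lies in one of the families ${\bf A}_1(\alpha),{\bf A}_2,{\bf A}_3,{\bf A}_4(\alpha),{\bf B}_1(\alpha),{\bf B}_2(\alpha),{\bf B}_3,{\bf C}(\alpha,\beta),{\bf D}_1,{\bf D}_2,{\bf D}_3,{\bf E}_1,\dots$ with explicit structure constants. For each family we test, as functions of $\delta$ and of the family parameters, the two defining conditions.

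\emph{First,} symmetric $\delta$-Leibniz admissibility. Here $\big({\rm A},[\cdot,\cdot]_\delta\big)$ must be left and right $\delta$-Leibniz. Equivalently, by the previous theorem and its proof, the two trilinear identities must hold.

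\emph{Second,} $\delta$-biderivation type, meaning identities \eqref{Def1} and \eqref{Def2}.

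Both conditions are multilinear of degree $3$. It therefore suffices to evaluate them on the $8$ basis triples $(e_i,e_j,e_k)$. Each evaluation gives polynomial equations in $\delta$ and the parameters, and the table records exactly the solutions $(\delta,\text{parameters})$.

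To reduce the work we use structural consequences proved earlier.
\begin{itemize}
\item For $\delta\notin\{\pm1,\frac12\}$, Theorem \ref{nilds} forces a symmetric $\delta$-Leibniz admissible algebra to be a nilalgebra of nilindex $3$.
\item By the last proposition, every $\delta{\rm BD}$-algebra with $\delta\ne1$ is a nilalgebra of nilindex $3$.
\end{itemize}
In dimension $2$, a nontrivial nilalgebra with $x^3=0$ is quickly seen to be ${\bf A}_3$. Indeed, the nilpotent $2$-dimensional algebras are ${\bf A}_3$ and members of the ${\bf A}$-families, and one checks $x^2x=xx^2=0$ directly. Moreover ${\bf A}_3$ is $2$-step nilpotent, so it satisfies both conditions for every $\delta$.

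This leaves the values $\delta\in\{-1,\frac12\}$ for ${\mathcal S}LA_\delta$, and $\delta=1$ for $\delta{\rm BD}$; $\delta=1$ is excluded for ${\mathcal S}LA$ by convention. For $\delta=\frac12$ the algebra must be power-associative with $x^2x=xx^2$. This is a strong restriction that already eliminates most families with idempotents that are not "commutative-like", and we then verify the survivors ${\bf D}_2(0,0)$, ${\bf D}_2(1,1)$ and ${\bf E}_1(0,0,0,0)$ by hand. For $\delta=-1$ we must test Jacobi--Jordan admissibility, that is, whether the symmetrized product satisfies $x^2x=0$ and the Jacobi-type identity. For $\delta=1$ in ${\rm BD}$, we check that $L_x$ and $R_x$ are derivations of the commutator algebra. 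Since the commutator algebra is at most $2$-dimensional Lie, this is a linear condition on the structure constants and yields the listed families ${\bf A}_1(\frac12),{\bf B}_2(\alpha),{\bf D}_1(\alpha,0),\dots$.

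The main obstacle is bookkeeping rather than any conceptual difficulty. The case $\delta=1$ for ${\rm BD}$ and the case $\delta=-1$ involve parametric families, where one must solve the polynomial systems exactly and avoid missing or double-counting parameter values up to the isomorphisms of \cite{kv16}. I would do these by computer algebra on the structure constants, then cross-check each resulting row by direct substitution.
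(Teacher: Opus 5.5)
\textbf{The plan has gaps: two of your pruning shortcuts for the $\delta{\rm BD}$ column are wrong.} Your overall strategy (run through the list of \cite{kv16} and test the cubic identities on basis triples) is the same direct computation the paper relies on. With those shortcuts, however, the third column comes out incomplete.

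First, a nontrivial $2$-dimensional algebra with $x^2x=xx^2=x^2x^2=0$ need not be ${\bf A}_3$; you are conflating ``nil'' with ``nilpotent''. Every anticommutative algebra has $x^2=0$, and ${\bf B}_3$ ($e_1e_2=e_2=-e_2e_1$) is nil but not nilpotent. The correct dichotomy is as follows. If some $x^2\neq 0$, then $x$ and $x^2$ are independent, since otherwise $x^2=\lambda x$ with $\lambda\neq0$ gives $x^2x=\lambda^2x\neq0$. In the basis $\{x,x^2\}$ the nil identities then force exactly the table of ${\bf A}_3$. If $x^2=0$ for all $x$, the algebra is anticommutative, hence ${\bf B}_3$. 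Since $[\cdot,\cdot]_{-1}\equiv 0$ on an anticommutative algebra, ${\bf B}_3$ is trivially a $(-1){\rm BD}$-algebra. That is a row of the table your reduction discards. For ${\mathcal S}LA_\delta$ the omission happens to be harmless: for ${\bf B}_3$ the conditions reduce to $(1-\delta)(1+\delta)^2(xy)z=0$. It still has to be checked.

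Second, the proposition you invoke, that every $\delta{\rm BD}$-algebra with $\delta\neq1$ is nil of index $3$, fails at $\delta=\frac12$. Putting $x=y=z$ in \eqref{Def1} and \eqref{Def2} yields only $xx^2=x^2x$ and $x^3=2\delta x^3$, which says nothing when $\delta=\frac12$. In fact every commutative associative algebra is $\frac12{\rm BD}$. The table you are proving places ${\bf D}_2(1,1)$, ${\bf D}_2(0,0)$ and ${\bf E}_1(0,0,0,0)$, all of which have idempotents, in the $\frac12$ column of $\delta{\rm BD}$. Since your plan leaves only $\delta=1$ open for ${\rm BD}$, these rows would never be found.

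A sound reduction goes like this:
\begin{enumerate}
\item Every $\delta{\rm BD}$-algebra is symmetric $\delta$-Leibniz admissible (take \eqref{Def1}$-\delta\,$\eqref{Def2} and \eqref{Def2}$-\delta\,$\eqref{Def1}). So for $\delta\neq1$ the ${\rm BD}$-candidates lie among the ${\mathcal S}LA_\delta$-survivors.
\item For $\delta\notin\{1,\frac12\}$ the nil argument is valid and leaves ${\bf A}_3$ and ${\bf B}_3$. Here ${\bf B}_3$ qualifies only for $\delta=-1$, because \eqref{Def1} reduces to $(1-\delta^2)x(yz)=0$.
\item For $\delta=\frac12$ the candidates are the four commutative associative algebras ${\bf A}_3$, ${\bf D}_2(1,1)$, ${\bf D}_2(0,0)$, ${\bf E}_1(0,0,0,0)$, and all of them qualify.
\end{enumerate}
Only $\delta=1$ then requires a genuine search over the families.
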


As we can see, nilalgebras play an important role in symmetric $\delta$-Leibniz admissible algebras 
and algebras of $\delta$-biderivation-type. 
A classification of $3$-dimensional nilalgebras was given in \cite{ks25}.
Using the present classification, we characterize $3$-dimensional nilagebras satisfying 
  symmetric $\delta$-Leibniz admissible  
and  $\delta$-biderivation-type identities.

\begin{longtable}{l|lcllllll}

$\delta{\rm BD}$ & $\mathfrak{g}_{1}$ & $:$ & 
 $e_2e_3 =e_1$ &  $e_3e_2 =-e_1$ \\
\hline

$({\pm 1}){\rm BD}$  & 
$\mathfrak{g}_2$ & $:$ & $e_1e_3 =e_1$ &  $e_2e_3=e_2$ &   $e_3e_1 =-e_1$ &  $e_3e_2=-e_2$\\
\hline

$({\pm 1}){\rm BD}$  & $\mathfrak{g}^{\alpha}_3$ & $:$ &  $e_1e_3 =e_1+e_2$ & $e_2e_3=\alpha e_2$ &
$e_3e_1 =-e_1-e_2$ & $e_3e_2=-\alpha e_2$\\
\hline

$({\pm 1}){\rm BD}$  & $\mathfrak{g}_4$ & $:$ &
$e_1e_2 =e_3$ & $ e_1e_3=-e_2$ & $e_2e_3=e_1$ &\\
&&& $e_2e_1 =-e_3$ & $ e_3e_1=e_2$ & $e_3e_2=-e_1$ \\
\hline

$({- 1}){\rm BD}$  &$\mathcal{A}_1^{\alpha}$ & $:$ &
$e_1e_2=e_3$ & $e_1e_3 =e_1+e_3$ & $e_2e_3=\alpha e_2$ &\\
&&&
$e_2e_1=-e_3$ & $e_3e_1 =-e_1-e_3$ & $e_3e_2=-\alpha e_2$\\
\hline

$({- 1}){\rm BD}$  &$\mathcal{A}_2$& $:$ &  
$e_1e_2=e_1$ & $e_2e_3=e_2$  &
$e_2e_1=-e_1$ & $e_3e_2=-e_2$  \\
\hline

$({- 1}){\rm BD}$  &$\mathcal{A}_3$ & 
$:$&$e_1e_2=e_3$ & $e_1e_3=e_1$  &$e_2e_3=e_2$  &\\ &&&
$e_2e_1=-e_3$ & $e_3e_1=-e_1$  &$e_3e_2=-e_2$  \\
\hline

${\delta}{\rm BD}$  &$\mathcal{N}_1$ & 
$:$&$e_1e_1=e_2$ \\
\hline

$\mathcal{S}LA_{\pm 1}$  & $\mathcal{N}_2$ & 
$:$&$e_1e_1=e_2$  &$e_1e_3=e_1$&$e_3e_1=-e_1$\\
\hline

$({\pm 1}){\rm BD}$ & $\mathcal{N}_3$ & 
$:$&$e_1e_1=e_2$  &$e_1e_3=e_3$&$e_3e_1=-e_3$\\
\hline

${\delta}{\rm BD}$  &$\mathcal{N}_4$ & 
$:$&$e_1e_1=e_2$  &$e_1e_3=e_2$&$e_3e_1=-e_2$ \\
\hline

$\mathcal{S}LA_{\pm 1}$  & ${\mathcal N}_{5}$ &$:$& $e_1 e_1 = e_2$  & $e_1 e_3=e_3$ & $e_3 e_1=-e_3$ & $e_3 e_3=e_2$\\
\hline

${\delta}{\rm BD}$  &${\mathcal N}_{6}^\alpha$ & $:$&  $e_1 e_1 = e_2$ & $e_1 e_3= \alpha e_2$ & $e_3 e_1= -\alpha e_2$ &  $e_3 e_3=  e_2$  \\
\hline

$({ 1}){\rm BD}$  &$\rm{N}_1$ &   $:$&     $e_1 e_1 = e_2$ & $e_2 e_1=e_3$ &  \\
\hline

$({ 1}){\rm BD}$ 
&$\rm{N}_2^\alpha$&$:$& $e_1 e_1 = e_2$ & $e_1 e_2=e_3$ & $e_2 e_1=\alpha e_3$  \\ 
\hline

$({\frac 12}){\rm BD}$
&$\rm{N}_2^1$&$:$& $e_1 e_1 = e_2$ & $e_1 e_2=e_3$ & $e_2 e_1= e_3$  \\ 
\hline

$\mathcal{S}LA_{-1}$ 
&$\rm{N}_2^{-1}$&$:$& $e_1 e_1 = e_2$ & $e_1 e_2=e_3$ & $e_2 e_1=- e_3$  \\ 
\hline

$({1}){\rm BD}$  &
$\bf{N}_1$ &       $:$  & $e_1 e_1 = e_2$ & $e_2 e_2=e_3$ & \\ 
\hline

$({1}){\rm BD}$  &
$\bf{N}_2$ &       $:$  & $e_1 e_1 = e_2$ & $e_2 e_1= e_3$ & $e_2 e_2=e_3$  \\ 
\end{longtable}
\noindent
Here: 
the first column indicates the varieties to which the algebra from the second column belongs
$\big($by $\mathcal{S}LA_{\delta}$ we mean that the suitable algebra does not belong to $\delta{\rm BD},$ but it belongs to $\mathcal{S}LA_{\delta}\big)$; 
the second and other columns consist of notations of algebras obtained in \cite{ks25}.
 All algebras are non-isomorphic, except 
 $\mathfrak{g}^{\alpha}_3 \cong \mathfrak{g}^{\alpha^{-1}}_3,$ 
 $\mathcal{A}_1^{\alpha} \cong \mathcal{A}_1^{\alpha^{-1}},$ and ${\mathcal N}_{6}^\alpha \cong {\mathcal N}_{6}^{-\alpha}.$

\end{document}